\documentclass[a4paper,12pt]{amsart}
\usepackage{amsfonts}
\usepackage{amsmath,array}
\usepackage{amssymb}
\usepackage{mathrsfs}
\usepackage[colorlinks]{hyperref}
 \usepackage{graphicx}
\usepackage{enumerate}
\usepackage[usenames]{color}
\usepackage{cases}
\newtheorem{thm}{Theorem}[section]
\newtheorem{cor}[thm]{Corollary}
\newtheorem{lem}[thm]{Lemma}
\newtheorem{prop}[thm]{Proposition}

\newtheorem{cond}[thm]{Condition}
\numberwithin{equation}{section}
\usepackage[english]{babel} 
\usepackage{blindtext}

\theoremstyle{definition}
\newtheorem{definition}[thm]{Definition}
\newtheorem{rem}[thm]{Remark}

\begin{document}

\title[Sobolev embeddings]{Fractional Sobolev embeddings on noncommutative torus}

\author[F. Sukochev]{F. Sukochev}
\address{
F. Sukochev:
  \endgraf
School of Mathematics and Statistics, University of New South
Wales,  
  \endgraf
 Kensington, 2052, NSW,
 \endgraf
  Australia  
  \endgraf
   {\it E-mail address} {\rm f.sukochev@unsw.edu.au}
  }

\author[R. Tastankul]{R. Tastankul}
\address{
 R. Tastankul:
  \endgraf
  Institute of Mathematics and Mathematical Modeling, 050010, Almaty, 
  \endgraf
  Kazakhstan
  \endgraf
  {\it E-mail address} {\rm ramazan.tastankul@mail.ru} 
  }

 \author[K. Tulenov]{K. Tulenov}
\address{
  K. Tulenov:
  \endgraf
  Institute of Mathematics and Mathematical Modeling, 050010, Almaty, 
  \endgraf
  Kazakhstan 
  \endgraf
  and 
   \endgraf
School of Mathematics and Statistics, University of New South
Wales,  
  \endgraf
 Kensington, 2052, NSW,
 \endgraf
  Australia 
  \endgraf
  {\it E-mail address} {\rm tulenov@math.kz} 
  }
\author[D. Zanin]{D. Zanin}
\address{
D. Zanin:
  \endgraf
School of Mathematics and Statistics, Central South University
  \endgraf
 Changsha, 410075, 
 \endgraf
  China  
  \endgraf
   {\it E-mail address} {\rm d.zanin@csu.edu.cn }
  } 

\date{}

\begin{abstract}

In this paper, we study the noncommutative fractional symmetric Sobolev spaces on noncommutative torus. We prove noncommutative distributional fractional Sobolev inequality and as its application, we obtain Sobolev embeddings. In order to obtain these results, we first prove a noncommutative version of the famous O'Neil inequality for the convolution. As a first application of our main results, we obtain a Cwikel–Solomyak–type estimate. As an another application, we show a $L_2$-time decay for the mild solution of the Cauchy problem for the diffusion equation in this noncommutative setting. When $\theta=0$, our results recover many known results on Sobolev embedding on the torus, and a classical result on Cauchy problem.
\end{abstract}

\subjclass[2010]{46L51, 46L52, 58B34, 47L25, 11M55, 46E35, 42B05, 43A50, 42A16, 42B15}

\keywords{Noncommutative torus, Sobolev space, Sobolev embedding, symmetric space.}

\maketitle

\tableofcontents

\section{Introduction}

Sobolev embedding theorems are among the most powerful and widely used tools in modern analysis, particularly in the study of partial differential equations (PDEs). Introduced by S.~L.~Sobolev in the 1930s~\cite{Sobolev}, these results establish a bridge between weak derivatives and classical function spaces. 

More precisely, let $d \in \mathbb{N}$, $1 < p < \infty$ and $0 < s < \frac{d}{p}$. If the exponents $p$ and $q$ are related by $\frac{1}{p} - \frac{1}{q} = \frac{s}{d},$ then there is a continuous embedding $ W^{s}_{p}(\mathbb{R}^{d}) \hookrightarrow L_{q}(\mathbb{R}^{d}),$ which means that every $u \in W^{s}_{p}(\mathbb{R}^{d})$ satisfies the estimate $\|u\|_{L_{q}(\mathbb{R}^{d})} \leq c \|u\|_{W^{s}_{p}(\mathbb{R}^{d})},$ where the constant $c>0$ is independent of $u$ and depends only on $d$, $p$ and $s$.

On bounded domains, an analogous embedding result is available under natural geometric assumptions (see~\cite[Part II, Chapter 5, Theorem 2]{Evans}). If $\Omega \subset \mathbb{R}^{d}$ is a bounded domain with a $C^{1}$ boundary, $1 \leq p < d$, and $s < \frac{d}{p}$, then the Sobolev space $W^{s}_{p}(\Omega)$ is compactly embedded into $L_{q}(\Omega)$, where $q = \frac{dp}{d - sp}.$ In other words, the embedding $ W^{s}_{p}(\Omega) \hookrightarrow L_{q}(\Omega)$ is continuous and compact.

When the integrability exponent exceeds the dimension, that is, when $d < p \leq \infty$, Sobolev embeddings yield not only improved integrability but also H\"older regularity (\cite[Part II, Chapter 5, Theorem 4]{Evans}). This phenomenon is expressed by Morrey’s inequality, which asserts that for $u \in C^{1}(\mathbb{R}^{d})$ one has $\|u\|_{C^{0,\gamma}(\mathbb{R}^{d})} \leq C \|u\|_{W_{p}^{1}(\mathbb{R}^{d})},
\ \gamma = 1 - \frac{d}{p},$ where the constant $C>0$ depends only on $p$ and $d$. In particular, for $p>d$ the space $W^{1}_{p}(\mathbb{R}^{d})$ continuously embeds into the H\"older space $C^{0,\gamma}(\mathbb{R}^{d})$ (\cite[Part II, Chapter 5]{Evans}) with some exponent $\gamma \in (0,1)$.

Trudinger~\cite{T1} achieved a critical milestone in 1967, and precisely characterized the limiting case $p = d$. He proved that for a domain $\Omega$ satisfying a cone condition, the space $W^{1}_{d}(\Omega)$ is continuously embedded into the Orlicz space $L_{\Phi}(\Omega)$ associated with the Young function
\begin{equation*}
    \Phi(t) = e^{|t|^{\frac{d}{d-1}}} - 1.
\end{equation*}
This result is sharp, meaning the exponent $\frac{d}{d-1}$ in the exponential cannot be increased. In their subsequent work~\cite{DT}, Donaldson and Trudinger provided a comprehensive and elegant generalization of the entire Sobolev embedding theory to the scale of Orlicz spaces. They systematically studied spaces $W^{1}_{L_{\Psi}}(\Omega)$, where the $L_{p}$ condition is replaced by an Orlicz space defined by a $N$-function $\Psi$. Their central result is a unified embedding theorem that recovers all the classical results as special cases.

In~\cite{Klimov1969}, Klimov further generalized these embedding theorems to the setting of symmetric (rearrangement invariant) function spaces (which encompass $L_p$, Orlicz, Lorentz, and Marcinkiewicz spaces). Let $\Omega \subset \mathbb{R}^{d}$ be a bounded domain with Lebesgue measure equals to $m(\Omega)$. If we assume $m(\Omega)=1,$ then a central insight of his work is that the embedding properties of the Sobolev spaces $W^{k}_{E}(\Omega)$ are equivalent to the boundedness properties of specific one-dimensional integral operators  (see also~\cite{Klimov1970, Klimov1972}) defined on $E(0,1)$ for $d\geq3:$  
\begin{equation}\label{OperatorA}
\begin{split}
    (A f)(t) = \int\limits_{t}^{1} s^{\frac{1}{d}-1} f(s)  ds, \;(A_k f)(t) = \int\limits_{t}^{1} s^{\frac{k}{d}-1} (C\mu(f))(s) ds, \,\ 2\leq k\leq d-1,\, t\in(0,1),
\end{split}
\end{equation}
where $(C\mu(f))(s) =\frac{1}{s}\int_{0}^s\mu(s,f)d\xi$ and $\mu(f)$ is the decreasing rearrangement of $f.$
Specifically, Klimov established that the Sobolev function space $W^{1}_{E}(\Omega)$ (that is the space of distributions on $\Omega$ whose (weak) derivatives belong to the symmetric function space $E(\Omega)$) embeds into a symmetric function space $F(\Omega)$ if and only if the operator $A:E(0,1)\to F(0,1)$ is bounded. Similar results were obtained for the Sobolev spaces $W^{k}_{E}(\Omega)$ by Cianchi, Edmunds, Kerman, and Pick in~\cite{EKP} (see also~\cite{CPS, Cianchi, GO}) in terms of the operator $A_k.$

Recently, in \cite{SZ}, two of the present authors established a distributional version of the Sobolev inequality: for $d\in\mathbb{N}$ and for any $f\in L_2(\mathbb{R}^d)$,
\begin{equation}\label{distributional sobolev}  
\mu\big((1-\Delta_{\mathbb{R}^{d}})^{-\frac{d}{4}}f\big)\chi_{(0,1)}\prec\prec c_{d}\,T(\mu(f)\chi_{(0,1)}),
\end{equation}
where $\mu(f)$ denotes the decreasing rearrangement of $f$, $\prec\prec$  stands for Hardy--Littlewood--Polya submajorization and $T$  defines the following Hardy type operator $T:L_{p}(0,1)\to L_{p}(0,1), 1\leq p\leq\infty$:
\begin{equation}\label{T def}
 (Tf)(t) = \frac{1}{t^{\frac{1}{2}}} \int\limits_0^t f(s) \, ds + \int\limits_t^1 \frac{f(s)}{s^{\frac{1}{2}}} \, ds, \ \ f \in L_{p}(0,1),
\end{equation}
which is the main technical tool in proving all the mentioned results in \cite{SZ}.

Let $\Lambda_{\psi}^{(2)}$ be the 2-convexification of the Lorentz space $\Lambda_{\psi}$ with 
\begin{equation*}
    \psi(t)=\begin{cases}
        \dfrac{1}{\log(\frac{e}{t})}, &t\in(0,1)\\
        t, &t\in [1,\infty).
    \end{cases}
\end{equation*}
Since $T:L_2(0,1)\to\Lambda_{\psi}^{(2)}(0,1)$ (see Lemma 9 in \cite{SZ}), it follows from the distributional estimate \eqref{distributional sobolev} that
$$(1-\Delta_{\mathbb{R}^{d}})^{-\frac{d}{4}}:L_2(\mathbb{R}^d)\to \Lambda_{\psi}^{(2)}(\mathbb{R}^{d}).$$
Since the image of the latter mapping is exactly $H^{\frac{d}{2}}_{2}(\mathbb{R}^{d}),$ it follows that
\begin{equation*}
H^{\frac{d}{2}}_{2}(\mathbb{R}^{d}) \hookrightarrow \Lambda_{\psi}^{(2)}(\mathbb{R}^{d}).
\end{equation*}
Moreover, the latter embedding is optimal in the following sense. 
\begin{thm}\cite[Corollary 14]{SZ}\label{TheoremIntro1} Let $d\geq1$ and let $E(\mathbb{R}^{d})$ be a symmetric Banach function space on $\mathbb{R}^{d}$. If
\begin{equation*}
    (1-\Delta_{\mathbb{R}^{d}})^{-\frac{d}{4}}: L_{2}(\mathbb{R}^{d})\to E(\mathbb{R}^{d}),
\end{equation*}
then $\Lambda_{\psi}^{(2)}(\mathbb{R}^{d})\subset (E+L_{\infty})(\mathbb{R}^{d}).$ 
\end{thm}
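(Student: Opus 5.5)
We will prove the optimality statement by testing the hypothesis on a single, well-chosen family of functions. The aim is to find functions in $L_2(\mathbb{R}^d)$ whose images under $(1-\Delta_{\mathbb{R}^d})^{-\frac d4}$ have rearrangements that dominate, near the origin, the rearrangement of an arbitrary element of $\Lambda_\psi^{(2)}$. The Bessel potential $(1-\Delta)^{-\frac d4}$ is convolution with the Bessel kernel $G_{d/2}$. This kernel is positive and radially decreasing, with $G_{d/2}(x)\asymp |x|^{-d/2}$ for $|x|\le 1$, and it decays exponentially at infinity. So for nonnegative $f$ we have the pointwise lower bound
$$\big((1-\Delta)^{-\frac d4}f\big)(x)\ \ge\ c\int_{|y|\le 1}|y|^{-d/2}f(x-y)\,dy .$$

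Step 1 reduces to radial data. Take $g\in\Lambda_\psi^{(2)}(\mathbb{R}^d)$. Since $E+L_\infty$ is symmetric and $\Lambda_\psi^{(2)}\cap L_\infty\subset L_\infty$, it suffices to show $\mu(g)\chi_{(0,1)}\in E(0,\infty)$, in the sense of the symmetric function norm underlying $E$. By the definition of the 2-convexification, $h:=\mu(g)^2\chi_{(0,1)}$ lies in $\Lambda_\psi$. The norm of the 2-convexification is
$$\|g\|_{\Lambda^{(2)}_\psi}^2=\int_0^1 \mu(s,g)^2\,d\psi(s)\asymp \int_0^1 \frac{\mu(s,g)^2}{s\log^2(e/s)}\,ds .$$

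Step 2 constructs a preimage. Define a radial decreasing $f$ by
$$f(x)=F(|x|^d),\qquad F(s)=\frac{\mu(s,g)}{s^{1/2}\log(e/s)}\chi_{(0,1)}(s).$$
With the substitution $s=|x|^d$ we get $\|f\|_2^2\asymp\int_0^1 F(s)^2\,ds<\infty$, so $f\in L_2$. For $|x|^d=t\le 1$, we restrict the convolution to the region $t\le|y|^d\le 1$ with $x-y$ in a fixed cone. This gives
$$u(x):=\big((1-\Delta)^{-\frac d4}f\big)(x)\ \gtrsim\ \int_t^1 s^{-1/2}F(s)\,ds\ =\ \int_t^1\frac{\mu(s,g)}{s\log(e/s)}\,ds .$$
This is exactly the second part of the operator $T$ from \eqref{T def}. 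Because $\mu(g)$ is decreasing, the right-hand side is at least
$$\mu(t,g)\int_t^{1}\frac{ds}{s\log(e/s)}=\mu(t,g)\,\log\log(e/t)\ \ge\ c\,\mu(t,g)\qquad\text{for } t\le 1/2 ,$$
after adjusting constants. Hence $\mu(u)\ge c\,\mu(g)$ on $(0,1/2)$. By hypothesis $u\in E$, so $\mu(g)\chi_{(0,1/2)}\in E$. The remaining piece of $\mu(g)$ is bounded by $\mu(1/2,g)<\infty$, so it lies in $L_\infty$. Therefore $g\in E+L_\infty$.

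The main obstacle is the geometric lower bound in Step 2. We must check that for $|x|\le 1$ the convolution over the annulus really captures $\int_{|x|^d}^1$ of the one-dimensional weight, uniformly in $x$. The plan is to restrict $y$ to the set where $|x-y|\ge|x|$ and $|x-y|\le 1$, which is a positive-proportion angular sector of each sphere of radius $r\ge 2|x|$. On this set $|y|\lesssim|x-y|$, so $G_{d/2}(y)\gtrsim|x-y|^{-d/2}$, and this yields the claimed inequality with $t$ replaced by $(2^d)t$. That rescaling is harmless after dilation invariance of symmetric norms.

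The pointwise bound above is also what checks the choice $F(s)=\mu(s,g)\,s^{-1/2}\log^{-1}(e/s)$: it is exactly the weight that makes $f\in L_2$ while the tail integral still dominates $\mu(t,g)$.
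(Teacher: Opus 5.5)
The paper only quotes this result from \cite{SZ}, so your argument has to stand on its own, and it fails at the decisive step. You claim that, because $\mu(g)$ is decreasing,
\[
\int_t^1\frac{\mu(s,g)}{s\log(e/s)}\,ds\ \ge\ \mu(t,g)\int_t^1\frac{ds}{s\log(e/s)}.
\]
Monotonicity gives the reverse inequality. For $s\ge t$ one has $\mu(s,g)\le\mu(t,g)$, so a tail integral of a decreasing function is bounded \emph{above} by $\mu(t,g)$ times the weight. The lower bound is genuinely false. Take $\mu(g)=a\chi_{(0,\varepsilon)}$. Your minorant is then $h(t)=a\log\frac{\log(e/t)}{\log(e/\varepsilon)}$ on $(0,\varepsilon)$. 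It tends to $0$ as $t\uparrow\varepsilon$, and it exceeds $ca$ only for $t\le e^{1-e^{c}}\varepsilon^{e^{c}}$. Now take $E=L_p$ with $2\le p<\infty$, which satisfies the hypothesis. Then $\|h\|_{L_p}$ is of order $a\varepsilon^{1/p}/\log(e/\varepsilon)$, while $\|\mu(g)\|_{L_p}=a\varepsilon^{1/p}$. So $u\in E$ only tells you that $h\in E$. And $h$ controls $\mu(g)$ only after a change of variables like $t\mapsto t^2$, which is unbounded on a general symmetric space. Hence $\mu(u)\ge c\,\mu(g)$ on $(0,1/2)$ does not follow. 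Your closing remark, that this choice of $F$ makes the tail dominate $\mu(t,g)$, is incorrect.

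The missing idea is that $d\psi(s)=\frac{ds}{s\log^2(e/s)}$ is essentially invariant under $s\mapsto s^2$, and you should build that shift into the preimage:
\[
F(s)=\frac{\mu(s^2,g)}{s^{1/2}\log(e/s)}\,\chi_{(0,1)}(s).
\]
Substitute $r=s^2$ and use $\log(e/s)\ge\frac12\log(e/r)$. This gives $\int_0^1F(s)^2\,ds\le 2\int_0^1\mu(r,g)^2\,d\psi(r)<\infty$. For $t\le\frac12$, since $\mu(s^2,g)\ge\mu(t,g)$ when $s\le\sqrt t$,
\[
\int_t^1\frac{\mu(s^2,g)}{s\log(e/s)}\,ds\ \ge\ \mu(t,g)\int_t^{\sqrt t}\frac{ds}{s\log(e/s)}=\mu(t,g)\log\frac{1+\log(1/t)}{1+\frac12\log(1/t)}\ \ge\ c_0\,\mu(t,g),
\]
with $c_0=\log\frac{1+\log 2}{1+\frac12\log 2}>0$. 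With this $F$ the rest of your plan goes through, and the kernel step is easier than you expect. On your own set $\{|x|\le|x-y|\le1\}$ one has $|y|\le2|x-y|\le2$, so $G_{d/2}(y)\gtrsim|x-y|^{-d/2}$. This gives $u(x)\gtrsim\int_{|x|^d}^1s^{-1/2}F(s)\,ds$ directly, with no sector argument and no rescaling of $t$. Hence $u(x)\gtrsim\mu(|x|^d,g)$ for $|x|^d\le\frac12$. A dilation then gives $\mu(g)\chi_{(0,1/2)}\in E$, and $\mu(g)\chi_{[1/2,\infty)}\le\mu(\frac12,g)<\infty$ finishes the proof. Your $f$ is also not radially decreasing, since $s^{-1/2}/\log(e/s)$ increases for $s>e^{-1}$, but that property is never used.
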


This theorem, in turn, enhanced earlier results due to Hansson \cite{Hansson}, Brezis and Wainger \cite{BW}, and Cwikel and Pustylnik \cite{CP}. The above mentioned papers  \cite{CPS, Cianchi, EKP, Klimov1969, Klimov1970, Klimov1972} studied Sobolev spaces defined via $k$-th order weak (distributional) derivatives.

The operator $A$ in \eqref{OperatorA} was introduced in Klimov’s work in the \emph{first-order} case ($k=1$), corresponding to the Sobolev space $W^{1}_{E}$ defined via first-order weak (distributional) derivatives. In contrast, in the present paper we work with \emph{fractional} Sobolev spaces $H_{\mathcal{E}}^s$ defined by the Bessel potential $(1-\Delta_{\theta})^{\frac{s}{2}}, \,\ s\in\mathbb{R}$, where $\Delta_{\theta}$ is the Laplacian on noncommutative torus defined below in \eqref{NCLaplacian}, and the central object throughout is the Hardy-type operator $T$ defined in \eqref{T def}, which governs the distributional fractional Sobolev inequalities that we establish, both in the commutative (see, \cite{SZ}) and in the noncommutative (quantum torus) setting. Even in the first-order setting, we work with the operator $T$ rather than the operator $A$.
In the first-order case with $d=2$, the operator $A$ coincides with the second (tail) term in the definition of the operator $T$. For \emph{higher-order} derivatives ($k\ge 2$), the relevant Sobolev spaces are $W^{k}_{E}$, defined in terms of weak derivatives up to order $k$, and the associated operator in the framework of Klimov and subsequent authors is $A_k$, given in \eqref{OperatorA}. In our framework, however, we continue to work with the operator $T$. Moreover, in the special case $k=\frac{d}{2}$, the operators $T$ and $A_k$ are equivalent (see \cite[Theorem 3.1]{EKP}).

In this work, we employ and enhance the approach of \cite{SZ}. Firstly, we obtain a noncommutative version of the famous O'Neil inequality (Corollary \ref{convolution-cor}) for convolution mapping, and, we establish a noncommutative distributional fractional Sobolev inequality (Theorem~\ref{TheoremMain}), formulated in terms of Hardy–Littlewood–Pólya submajorization with the operator $T$. As a result, we obtain a noncommutative version (Corollary~\ref{ThmSEmb}) of the Sobolev inequality \cite[Proposition~7]{SZ}. 

In recent years, the development of noncommutative analysis has inspired many contributions to noncommutative versions of Sobolev embeddings~\cite{L, McDSX, Spera, XXY}. An early contribution to the extension of Sobolev theory to noncommutative algebras was made by Spera \cite{Spera}. Building on the initial ideas introduced in \cite{Spera}, a comprehensive theory of function spaces on noncommutative torus $\mathbb{T}^{d}_{\theta}$  was subsequently developed in \cite{XXY}. This framework encompasses fractional (or potential) Sobolev spaces and their generalizations, including Besov spaces (see also \cite{RST}) and Triebel–Lizorkin spaces. In particular, \cite{XXY} establishes a noncommutative analog of the classical Sobolev embedding theorem on $\mathbb{T}^{d}_{\theta}$ (see \cite[Theorem~6.6]{XXY}): if $d\geq 2$ and $1<p<q<\infty$ satisfy $ \frac{1}{q} = \frac{1}{p} - \frac{k}{d},$ then the following continuous embedding holds: $W^{k}_{p}(\mathbb{T}^{d}_{\theta}) \hookrightarrow L_{q}(\mathbb{T}^{d}_{\theta}).$ In a related direction, Lafleche \cite{L} investigated noncommutative analogues of these constructions within the context of quantum mechanics and semiclassical analysis.

As an application of our main theorem, we study noncommutative fractional symmetric Sobolev spaces. In particular, by using connections with Hardy type operator $T$, we obtain a sufficient condition for noncommutative Sobolev embeddings (Theorem \ref{SymSobolevTh}). In particular, when $\theta=0$, our result in Theorem \ref{SymSobolevTh} coincides with the optimal embedding Theorem \ref{TheoremIntro1} and extends all classical results on the torus due to Hansson \cite{Hansson}, Brezis and Wainger \cite{BW}, and Cwikel and Pustylnik \cite{CP}.

Namely, we obtain that if $T$ is bounded from a symmetric space $E(0,1)$ to another symmetric space $F(0,1),$ then we have the following embedding
$$H^{\frac{d}{2}}_{\mathcal{E}}(\mathbb{T}^{d}_{\theta}) \hookrightarrow \mathcal{F}(\mathbb{T}^{d}_{\theta}),$$
where $H^{\frac{d}{2}}_{\mathcal{E}}(\mathbb{T}^{d}_{\theta})$ is the fractional Sobolev
space on the noncommutative torus $\mathbb{T}^{d}_{\theta}$ defined in
Subsection~\ref{NCSobolevSpace}, and $\mathcal{F}(\mathbb{T}^{d}_{\theta})$ is
a symmetric operator spaces corresponding to the symmetric function space $F(0,1)$ (see, Subsection \ref{SubsNCOpSp}).

\subsection{Applications of fractional Sobolev embedding}
We present two applications of our main result.
The first application is a Cwikel–Solomyak–type estimate. In \cite{SZ}, the authors proved the following result as an application of Theorem~\ref{TheoremIntro1}.
\begin{thm}\cite[Proposition 19]{SZ}\label{TheoremIntro2} Let $d\in\mathbb{N}$ and let $\mathtt{M}_{\psi}(\mathbb T^d)$ be the Marcinkiewicz space associated with function $\psi$. Let $f\in \mathtt{M}_{\psi}(\mathbb{T}^{d})$. We have
    \begin{equation*}
        \|(1-\Delta_{\mathbb{T}^{d}})^{-\frac{d}{4}}M_{f}(1-\Delta_{\mathbb{T}^{d}})^{-\frac{d}{4}}\|_{L_{2}(\mathbb{T}^{d})\to L_{2}(\mathbb{T}^{d})}\leq c_{d}\|f\|_{\mathtt{M}_{\psi}(\mathbb{T}^{d})},
    \end{equation*}
where $\Delta_{\mathbb{T}^{d}}$ is the Laplacian on the classical $d$-torus $\mathbb{T}^{d}$ and $M_{f}$ denotes a multiplication operator by $f$ on $L_{2}(\mathbb{T}^{d})$.
\end{thm}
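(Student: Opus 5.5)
The estimate is a bound on a quadratic form, so I would prove it by factorization. Write $A=(1-\Delta_{\mathbb{T}^d})^{-\frac d4}$, so the operator in question is $AM_fA$. It is enough to treat $f\ge 0$: for general $f$, split $f$ into real and imaginary parts and each of those into positive and negative parts. Each piece has Marcinkiewicz norm at most $\|f\|_{\mathtt{M}_\psi}$, since $\mathtt{M}_\psi$ is a symmetric space and $|\mathrm{Re} f|,|\mathrm{Im} f|\le |f|$.

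For $f\ge 0$ we have $AM_fA=(M_{f^{1/2}}A)^*(M_{f^{1/2}}A)$, so
\[
\|AM_fA\|_{L_2\to L_2}=\|M_{f^{1/2}}A\|_{L_2\to L_2}^2 .
\]
Hence it suffices to show $\|f^{1/2}Au\|_{L_2}\le c_d\|f\|_{\mathtt{M}_\psi}^{1/2}\|u\|_{L_2}$ for all $u\in L_2(\mathbb{T}^d)$.

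The first ingredient is the torus analogue of the embedding behind Theorem~\ref{TheoremIntro1}:
\[
A:L_2(\mathbb{T}^d)\to\Lambda^{(2)}_\psi(\mathbb{T}^d).
\]
This is the $\theta=0$ case of the distributional Sobolev inequality on the torus, of the form $\mu(Au)\prec\prec c_d\,T(\mu(u))$. It holds because the Bessel kernel on $\mathbb{T}^d$ behaves like $|x|^{-d/2}$ near the origin and is bounded away from it. One can then run the O'Neil-type convolution estimate and use $T:L_2(0,1)\to\Lambda^{(2)}_\psi(0,1)$ (Lemma 9 of \cite{SZ}). Consequently $g=|Au|^2$ lies in $\Lambda_\psi(\mathbb{T}^d)$ with $\|g\|_{\Lambda_\psi}=\|Au\|^2_{\Lambda_\psi^{(2)}}\le c\|u\|_{L_2}^2$.

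The second ingredient is Köthe duality on the finite measure space $\mathbb{T}^d$, namely $(\Lambda_\psi)^\times=\mathtt{M}_\psi$ with
\[
\int fg\le \int_0^1\mu(f)\mu(g)\le \|f\|_{\mathtt{M}_\psi}\|g\|_{\Lambda_\psi}.
\]
The first inequality is Hardy–Littlewood. The second follows by writing $\int_0^1\mu(f)\,d\psi'$-type expressions in terms of $\mu(g)$, using $\int_0^t\mu(f)\le \|f\|_{\mathtt{M}_\psi}\,\frac{t}{\psi(t)}$-type normalization, which gives the pairing $\Lambda_\psi\times\mathtt{M}_\psi$. Combining the two ingredients,
\[
\|f^{1/2}Au\|_2^2=\int f|Au|^2\le \|f\|_{\mathtt{M}_\psi}\,\|Au\|^2_{\Lambda^{(2)}_\psi}\le c_d\|f\|_{\mathtt{M}_\psi}\|u\|_2^2 .
\]

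The main obstacle is establishing the torus version of the embedding $A:L_2\to\Lambda^{(2)}_\psi$. This needs the distributional inequality on $\mathbb{T}^d$, which requires kernel estimates for $(1-\Delta)^{-d/4}$ on the torus via Poisson summation, the O'Neil inequality, and care with the normalization of $\psi$ on $(0,1)$ versus $[1,\infty)$. If a transference route is available, I would instead deduce it from the $\mathbb{R}^d$ version by periodization with a cutoff, which is what I would try first.
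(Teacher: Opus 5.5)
Your proof is correct and follows essentially the same route as the paper, whose argument for this statement is the proof of Theorem~\ref{C-SforM} specialized to $\theta=0$: you reduce to $f\ge 0$, rewrite the norm as the quadratic form $\int f\,|Au|^2$, and bound it by the $\mathtt{M}_\psi$--$\Lambda_\psi$ pairing; you then control $\|Au\|_{\Lambda^{(2)}_\psi}$ by the O'Neil-based submajorization $\mu(Au)\prec\prec c\,T\mu(u)$ (with the kernel in $L_{2,\infty}$), together with $T:L_2(0,1)\to\Lambda^{(2)}_\psi(0,1)$ and the full symmetry of $\Lambda^{(2)}_\psi$, which you should state explicitly. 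One small slip in your duality sketch: with $\|f\|_{\mathtt{M}_\psi}=\sup_{t}\psi(t)^{-1}\int_0^t\mu(s,f)\,ds$ the correct bound is $\int_0^t\mu(s,f)\,ds\le\|f\|_{\mathtt{M}_\psi}\,\psi(t)$, not $\|f\|_{\mathtt{M}_\psi}\,t/\psi(t)$ (the latter would pair with $\Lambda_{t/\psi}$ rather than $\Lambda_\psi$), and integrating by parts against $d(-\mu(g))$ then gives $\int_0^1\mu(f)\mu(g)\le\|f\|_{\mathtt{M}_\psi}\int_0^1\mu(g)\,d\psi$.
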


Motivated by this, we use the fractional Sobolev embedding established in this paper to obtain a noncommutative analogue of this optimal estimate. Specifically, as an application of Theorem~\ref{TheoremMain}, we prove a noncommutative Cwikel–Solomyak–type estimate (Theorem~\ref{C-SforM}), which in the commutative case $\theta=0$ reduces exactly to Theorem~\ref{TheoremIntro2}.

As an another application of our noncommutative Sobolev inequality (Corollary~\ref{ThmSEmb}), we obtain the $L_2$-time decay estimate of the heat-type equation in noncommutative setting (Theorem \ref{L2timeest}). The decay properties of solutions to space-fractional diffusion equations have been thoroughly investigated in the commutative case ($\theta = 0$). Early studies on space-fractional heat diffusion were conducted in \cite{Chasseigne2006} and \cite{IgnatRossi2010}, with a more comprehensive analysis for the case of the classical derivative provided in \cite{Reference38}. The asymptotic behavior and decay of solutions were further analyzed in \cite{Vazquez2007}. For the specific endpoint cases of fractional derivative, related results were established in \cite{VergaraZacher2010} and later extended in collaboration with the present authors in \cite{VergaraPresentAuthors} (see also \cite{KSZ}, \cite{STo} and references therein). 
Recently, similar problems were studied in noncommutative setting in \cite[Theorem 4.1.2]{STT}, where the authors  established $L_{p}$-$L_{q}$ decay estimates for the range $1<p\leq 2\leq q<\infty$.  When one considers the case $p=q=2$, their method no longer yields an $L_{2}$-time decay estimate for $t>0$: the argument loses the dependence on $t$, resulting only in a uniform (time--independent) bound. In this paper, we employ a different approach, mainly using Theorem \ref{TheoremMain}, and complement the main results of \cite{STT} for the case $p=q=2$ (see Theorem \ref{L2timeest}), obtaining an $L_2$-time decay estimate of the solution.

\section{Preliminaries}

\subsection{Symmetric (quasi-)Banach function spaces}

Let $\Omega\subset\mathbb{R}^{d}, d\geq1$ be a bounded domain and $m$ be the Lebesgue measure on $\Omega$, and let $S(\Omega)$ denote the space of all $m$-measurable real-valued functions on $\Omega$. For a function $f \in S(\Omega)$, its decreasing rearrangement $\mu(f)$ is defined by
$$
\mu(t,f) = \inf\{ s \geq 0 : m(\{|f| > s\}) \leq t \}, \ \ t \geq 0.
$$

For $1 \leq p \leq \infty$, let $L_{p}(\Omega)$ represents the classical Lebesgue space of $p$-integrable functions (or essentially bounded functions in the case $p=\infty$). The Lorentz space $L_{p,q}(\Omega), \ 1 \leq p, q \leq \infty$, consists of all $f\in S(\Omega)$ for which the following quasi-norm is finite:
\begin{equation*}
    \|f\|_{L_{p,q}(\Omega)}=\left\{ \begin{array}{rcl}
         \left(\int\limits_{0}^{m(\Omega)}\left(t^{\frac{1}{p}}\mu(t,f)\right)^q\frac{dt}{t}\right)^{\frac{1}{q}}, & \mbox{for}
         & q<\infty, \\ \sup\limits_{t>0}t^\frac{1}{p}\mu(t,f),\;\;\;\;\;\;\;\;\;\;\;\; & \mbox{for} & q=\infty. 
                \end{array}\right. 
\end{equation*}
For the case $q=\infty$, we also define the following equivalent quasi-norm:
\begin{equation}\label{LorentzEqvNorm}
    \|f\|_{L_{p,\infty}(\Omega)}=\sup\limits_{0<s<m(\Omega)} s \, m(\{t\in\Omega: |f(t)|>s\})^{\frac{1}{p}}=\sup\limits_{0<t<m(\Omega)}t^\frac{1}{p}\mu(t,f), \ \ 1\leq p<\infty. 
\end{equation}
For a comprehensive treatment of these spaces, we refer the reader to \cite{G2014}.

\begin{definition}\label{Sym}
A space $(E(\Omega),\|\cdot\|_{E(\Omega)})$ is called a \emph{symmetric (quasi-)Banach function space} on $\Omega$ if it satisfies the following properties:
\begin{enumerate}[{\rm (a)}]
    \item $E(\Omega)$ is a linear subspace of $S(\Omega)$;
    \item $(E(\Omega),\|\cdot\|_{E(\Omega)})$ forms a complete (quasi-)normed space;
    \item For any $f \in E(\Omega)$ and $g \in S(\Omega)$ with $\mu(g) \leq \mu(f)$, it follows that $g \in E(\Omega)$ and $\|g\|_{E(\Omega)} \leq \|f\|_{E(\Omega)}$.
\end{enumerate}
\end{definition}

We say that a function $g\in S(\Omega)$ is \emph{submajorized} by $f$ in the sense of Hardy-Littlewood-Pólya (denoted $\mu(g) \prec\prec \mu(f)$) if
$$
\int\limits_{0}^t \mu(s,g) \, ds \leq \int\limits_{0}^t \mu(s,f) \, ds, \ t \geq 0.
$$

\begin{definition}
A (quasi-)Banach function space $E(\Omega) \subset S(\Omega)$ is termed \emph{fully symmetric} if whenever $f \in E(\Omega)$ and $g \in S(\Omega)$ satisfy $\mu(g) \prec\prec \mu(f)$, it follows that $g \in E(\Omega)$ and $\|g\|_{E(\Omega)} \leq \|f\|_{E(\Omega)}$.
\end{definition}

Let $\psi$ be a given increasing, concave and continuous function on $(0,\infty)$ (or $(0,1)$) with $\psi(0)=0$. The Lorentz space $\Lambda_{\psi}(\Omega)$ associated with function $\psi$ is defined as follows
\begin{equation*}
    \Lambda_{\psi}(\Omega)=\left\{ f\in S (\Omega) : \int\limits_{0}^{m(\Omega)}\mu(s,f)\,d\psi(s)<\infty \right\},
\end{equation*}
and the Marcinkiewicz space $\mathtt{M}_{\psi}(\Omega)$ associated with function $\psi$ is defined by setting
\begin{equation*}
    \mathtt{M}_{\psi}(\Omega)=\left\{ f\in S (\Omega) :\sup_{0<t<1}\frac{1}{\psi(t)} \int\limits_{0}^{t}\mu(s,f)\,ds<\infty \right\}.
\end{equation*}
 The $L_{p}$-spaces, Lorentz $L_{p,q}$ spaces, Lorentz and  Marcinkiewicz spaces are classical examples of symmetric function spaces \cite{BS, KPS, LT}, moreover,  $L_{p}, 1\leq p\leq\infty,$ $L_{p,q}, 1<p<\infty, 1\leq q \leq\infty$, the Lorentz space $\Lambda_{\psi}$ and the Marcinkiewicz space $\mathtt{M}_{\psi}$ are fully symmetric spaces (\cite[Chapter 6]{DdPS}).

By $E^{(2)}(\Omega)$ we define 2-convexification of a given symmetric space $E(\Omega)$ (see, for example \cite{DdPS, LT}):
\begin{equation*}
   E^{(2)}(\Omega)=\left\{ f\in S (\Omega) :|f|^{2}\in E(\Omega)\right\},
\end{equation*}
\begin{equation}\label{2ConvNorm}
     \|f\|_{E^{(2)}(\Omega)}=\||f|^{2}\|_{E(\Omega)}^{\frac{1}{2}}, \ \ f\in E(\Omega).
\end{equation}
It should be pointed out that, if $E$ is fully symmetric, then $E^{(2)}$ is fully symmetric (\cite[Proposition 6.6.9]{DdPS}).
\subsection{Hardy type operator}

Now, we define Hardy-type operator, which plays a key role in the proof of our main results.
Let $f\in L_{p}(0,1), \ 1\leq p\leq\infty$, then we define following Hardy-type operator $T:L_{p}(0,1)\to L_{p}(0,1)$ (see, \cite[Chapter II, p. 168]{KPS} and \cite{SZ})
\begin{equation}\label{operatorT}
    (Tf)(t) = \frac{1}{t^{\frac{1}{2}}} \int\limits_0^t f(s) \, ds + \int\limits_t^1 \frac{f(s)}{s^{\frac{1}{2}}} \, ds, \ \ f \in L_p(0,1), \ 1\leq p\leq\infty .
\end{equation}
When $p=2$, the operator $T:L_{2}(0,1)\to L_{2}(0,1)$ is a Hilbert-Schmidt operator (for more details on $T$, see \cite[p. 21]{SZ}).

Let us define the following concave function
\begin{equation}\label{PhiPsiFunc}
\phi(t)=\frac{1}{\log(\frac{e}{t})}, \ \ t\in(0,1).
\end{equation}

By $\Lambda_{\phi}(0,1)$ we denote the Lorentz space associated with the function $\phi$.

\begin{prop}\label{ThmTBdd} Let $T$ be the operator defined by \eqref{operatorT} and let $\phi$ be the function defined by \eqref{PhiPsiFunc}. Then we have
    
 \begin{itemize}
             \item [(i)] $T$ is bounded on $L_{p}(0,1), \, 1\leq p \leq\infty$;
                       \item [(ii)]  $T$ is bounded from $L_{2}(0,1)$ to $\Lambda^{(2)}_{\phi}(0,1).$
                     \end{itemize}
\end{prop}
\begin{proof} (ii) was already obtained in \cite[Lemma 9]{SZ}. Let us prove part (i). Indeed, let $f\in L_{1}(0,1)$ and let us calculate $L_{1}$-norm of $Tf$. By changing the order of integration and applying H\"older inequality, for the first term of \eqref{operatorT} we get
\begin{equation*}
    \begin{split}
    \left\|\frac{1}{t^{\frac{1}{2}}}\int\limits_{0}^{t}f(s)\,ds\right\|_{L_{1}(0,1)} & \leq \int\limits_{0}^{1}\frac{1}{t^{\frac{1}{2}}}\int\limits_{0}^{t}|f(s)|\,dsdt= \int\limits_{0}^{1}|f(s)|\int\limits_{s}^{1}\frac{1}{t^{\frac{1}{2}}}\,dtds\\
    &=2\int\limits_{0}^{1}|f(s)|(1-s^{\frac{1}{2}})\,ds\leq 2 \sup\limits_{0< s< 1}(1-s^{\frac{1}{2}})\|f\|_{L_{1}(0,1)}=2\|f\|_{L_{1}(0,1)},
    \end{split}
\end{equation*}
and for the second term
\begin{equation*}
    \begin{split}
    \left\|\int\limits_{t}^{1}\frac{f(s)}{s^{\frac{1}{2}}}\,ds\right\|_{L_{1}(0,1)} & \leq \int\limits_{0}^{1}\int\limits_{t}^{1}\frac{|f(s)|}{s^{\frac{1}{2}}}\,dsdt= \int\limits_{0}^{1}|f(s)|\int\limits_{0}^{s}\frac{1}{t^{\frac{1}{2}}}\,dtds\\
    &=2\int\limits_{0}^{1}|f(s)|s^{\frac{1}{2}}\,ds\leq 2 \sup\limits_{0< s< 1}s^{\frac{1}{2}}\|f\|_{L_{1}(0,1)}=2\|f\|_{L_{1}(0,1)}.
    \end{split}
\end{equation*}
Hence,
\begin{equation*}
    \|Tf\|_{L_{1}(0,1)}\leq4\|f\|_{L_{1}(0,1)}, \ \ f\in L_{1}(0,1).
\end{equation*}

Now, let $f\in L_{\infty}(0,1).$ Then we have
\begin{equation*}
\begin{split}
    \|Tf\|_{L_{\infty}(0,1)}&=\sup\limits_{0< t< 1}|Tf(t)|\leq\sup\limits_{0< t< 1} \frac{1}{t^{\frac{1}{2}}}\int\limits_{0}^{t}|f(s)|\,ds+\sup\limits_{0< t< 1} \int\limits_{t}^{1}\frac{|f(s)|}{s^{\frac{1}{2}}}\,ds\\
    &\leq \|f\|_{L_{\infty}(0,1)}\sup\limits_{0< t< 1}t^{\frac{1}{2}}+2\|f\|_{L_{\infty}(0,1)}\sup\limits_{0< t< 1}(1-t^{\frac{1}{2}})= 3\|f\|_{L_{\infty}(0,1)},
\end{split}
\end{equation*}
and we obtain
\begin{equation*}
    \|Tf\|_{L_{\infty}(0,1)}\leq 3\|f\|_{L_{\infty}(0,1)}, \ \ f\in L_{\infty}(0,1).
\end{equation*}
Then, the assertion follows from the interpolation \cite[Theorem 1.3.4]{G2014}.
\end{proof}

\subsection{Noncommutative symmetric (quasi-)Banach spaces}\label{SubsNCOpSp}

Let $\mathcal{B}(\mathcal{H})$ denote the algebra of all bounded linear operators on a Hilbert space $\mathcal{H}$. Let us denote by $\mathcal{M}\subset \mathcal{B}(\mathcal{H})$ a semifinite von Neumann algebra equipped with a normal semifinite faithful trace $\tau$, and by $\mathcal{M}'$ we define commutant of $\mathcal{M}$:
\begin{equation*}
    \mathcal{M}'=\left\{y\in\mathcal{B}(\mathcal{H}): yx=xy \ \ \forall x\in\mathcal{M}  \right\}.
\end{equation*}
\begin{definition} A closed and densely defined operator $x$ on $\mathcal{H}$, with the domain ${\rm dom}(x)$, is called \textit{affiliated} with $\mathcal{M}$ if $u^{*}xu=x$ for any unitary $u$ from $\mathcal{M}'.$ The operator $x$ is said to be $\tau$-\textit{measurable} if $x$ is affiliated with $\mathcal{M}$ and for every $\varepsilon>0$, there exists a projection $p\in\mathcal{M}$ such that $p(\mathcal{H})\subset {\rm dom}(x)$ and $\tau(\textbf{1}-p)<\varepsilon$, here $\textbf{1}$ is identity operator on $\mathcal{H}$. 
\end{definition}
The set of all $\tau$-measurable operators will be denoted by $S(\mathcal{M})$.

For $x\in S(\mathcal{M})$, define the \textit{generalized $t$-th singular number} of $x$ by
\begin{equation}\label{GenSingNumb}
    \mu(t,x)=\inf\{s>0: \tau(e_{(s,\infty)}(|x|))\leq t\}, \ \  t\geq 0,
\end{equation}
where $e_{(s,\infty)}(|x|)$ is the spectral projection of $|x|$ corresponding to the interval $(s,\infty)$. The function $t \mapsto \mu(t, x)$ is decreasing and right-continuous. For more details on generalized $t$-th singular number, we refer to \cite[Chapter 3]{DdPS} and \cite[Chapter 2]{LSZ}.

\begin{definition}\label{NC Sym}
A linear subset $\mathcal{E} \subset S(\mathcal{M})$ equipped with a complete (quasi-)norm $\|\cdot\|_{\mathcal{E}}$ is called a \emph{(quasi-)Banach symmetric operator space} if, for every $x \in \mathcal{E}$ and every $y \in S(\mathcal{M})$ satisfying $\mu(y) \leq \mu(x)$, it follows that $y \in \mathcal{E}$ and $\|y\|_{\mathcal{E}} \leq \|x\|_{\mathcal{E}}$.
\end{definition}

In the case when the von Neumann algebra is commutative, specifically $\mathcal{M} = L_{\infty}(0,1)$, the construction of a symmetric operator space coincides with a symmetric function space on $(0,1)$.

The construction of noncommutative symmetric operator spaces proceeds as follows. Given a symmetric (quasi-)Banach function space $E$ on $(0,1)$, we define
$$
\mathcal{E}(\mathcal{M}) = \{ x \in S(\mathcal{M}) : \mu(x) \in E \}
$$
and endow it with the natural (quasi-)norm
\begin{equation*}\label{SymNCspaceNorm}
\|x\|_{\mathcal{E}(\mathcal{M})} := \|\mu(x)\|_E, \ \ x \in \mathcal{E}(\mathcal{M}).
\end{equation*}
Then $(\mathcal{E}(\mathcal{M}), \|\cdot\|_{\mathcal{E}(\mathcal{M})})$ forms a (quasi-)Banach space, called the \emph{noncommutative symmetric (quasi-)Banach operator space} associated with $(\mathcal{M},\tau)$ corresponding to $(E,\|\cdot\|_{E})$ \cite{KS, F}.

As established in \cite{KS}, \cite{F} (see also \cite[Chapter 3]{LSZ}), the mapping
$$
(E, \|\cdot\|_{E}) \longleftrightarrow (\mathcal{E}(\mathcal{M}), \|\cdot\|_{\mathcal{E}(\mathcal{M})})
$$
defines a one-to-one correspondence between symmetric operator spaces in $S(\mathcal{M})$ and symmetric function spaces in $S(\Omega)$.

\begin{definition}\label{NC FSym}
A (quasi-)Banach operator space $\mathcal{E}(\mathcal{M}) \subset S(\mathcal{M})$ is called \emph{fully symmetric} if for every $x \in \mathcal{E}(\mathcal{M})$ and every $y \in S(\mathcal{M})$ with $\mu(y) \prec\prec \mu(x)$, we have $y \in \mathcal{E}(\mathcal{M})$ and $\|y\|_{\mathcal{E}(\mathcal{M})} \leq \|x\|_{\mathcal{E}(\mathcal{M})}$.
\end{definition}

A more details of the properties of these spaces can be found in \cite{KS, LSZ}, and \cite{DdPS}.

In particular, if $E=L_{p}(0,1),$ $0< p<\infty,$ then we obtain
\begin{equation}\label{LponVNeumann}
    L_{p}(\mathcal{M})=\Big\{x\in S(\mathcal{M}):\ \mu(x)\in L_{p}(0,1)\Big\}
\end{equation}
with the quasi-norm
\begin{equation}\label{LponVNeumannNorm}
    \|x\|_{L_{p}(\mathcal{M})}:=\|\mu(x)\|_{L_{p}(0,1)}, \,\ x\in L_p(\mathcal{M}).
\end{equation}
If $p=\infty,$ then we set $L_{\infty}(\mathcal{M}):=\mathcal{M}$ with the uniform norm $\|x\|_{L_{\infty}(\mathcal{M})}:=\|x\|,$ $x\in \mathcal{M}.$

As shown in \cite{PXu}, $(L_{p}(\mathcal{M}), \|\cdot\|_{L_{p}(\mathcal{M})})$ is a Banach space when $1 \leq p \leq \infty$ and a quasi-Banach space when $0 < p < 1$.

Moreover, when $p=2$ the space $L_{2}(\mathcal{M})$ becomes Hilbert space with the inner product
\begin{equation*}
   \langle x,y \rangle:=\tau(xy^{\ast}),\,\, x,y\in L_{2}(\mathcal{M}). 
\end{equation*}

Let $\psi$ be a given increasing concave function on $(0,1)$ with $\psi(0)=0$. Define noncommutative Lorentz and Marcinkiewicz space associated with the von Neumann algebra $(\mathcal{M},\tau)$ as follows
\begin{equation*}
    \Lambda_{\psi}(\mathcal{M})=\left\{ x\in S (\mathcal{M}) : \int\limits_{0}^{\tau(\textbf{1})}\mu(s,x)\,d\psi(s)<\infty \right\},
\end{equation*}
and 
\begin{equation*}
    \mathtt{M}_{\psi}(\mathcal{M})=\left\{ f\in S (\mathcal{M}) :\sup_{0<t<\tau(\textbf{1})}\frac{1}{\psi(t)} \int\limits_{0}^{t}\mu(s,x)\,ds<\infty \right\}.
\end{equation*}

 For further details on the theory of noncommutative symmetric spaces over general semifinite von Neumann algebras $\mathcal{M}$, we refer to \cite{DdPS, LSZ, PXu}.

\subsection{Noncommutative  torus}
Given  $d \geq 2,$ let  $\mathbb{T}^{d}$ be the usual $d$-dimensional torus, that is, 
$\mathbb{T}^{d} = \mathbb{R}^{d} / \mathbb{Z}^{d}.$ Let $\theta = (\theta_{jk})_{1 \leq j,k \leq d}$ be a real antisymmetric matrix, i.e.
$$
\theta_{jk} = -\,\theta_{kj}, \ \ 1 \leq j,k \leq d,
$$ with  $\theta_{1}, \ldots, \theta_{d}$ being the column vectors of $\theta$. For $j = 1, \ldots, d$, we define unitary operators by
 $$
(U_{j}f)(t) = e^{it_{j}}f(t + \pi \theta_{j}), \,\ f \in L_{2}(\mathbb{T}^{d}).
 $$
These operators satisfy the commutation relations
$$
U_{k}U_{j} = e^{2\pi i \theta_{kj}} U_{j}U_{k}, \,\ j, k = 1, \ldots, d.
$$
Let  $\mathcal{A}_{\theta}$ denote the $C^{*}$-subalgebra of $\mathcal{B}(L_{2}(\mathbb{T}^{d}))$ generated by these unitary operators. Since $U_{1}, \ldots, U_{d}$ are unitary, we also have
$$
U_{j}U_{k}^{*} = e^{2\pi i \theta_{kj}} U_{k}^{*}U_{j}, \,\  j, k = 1, \ldots, d.
$$

For $n = (n_{1}, \ldots, n_{d}) \in \mathbb{Z}^{d},$ we define 
$$ 
e^{\theta}_{n} = U_{1}^{n_{1}} U_{2}^{n_{2}} \cdots U_{d}^{n_{d}}.
$$
A polynomial in  $\mathcal{A}_{\theta}$ is a finite sum
$$
x = \sum_{n\in \mathbb{Z}^{d}} \alpha_{n} e^{\theta}_{n}, \ \ \alpha_{n} \in \mathbb{C}, \ \ n \in \mathbb{Z}^{d},
$$
meaning  $\alpha_n = 0$ for all but finitely many $ n \in \mathbb{Z}^{d}$. The space of all such polynomials $\mathcal{P}_{\theta}$ is dense in $\mathcal{A}_{\theta}.$ 

For any $x$ as above, we define $\tau_{\theta}(x) = \alpha_{\mathbf{0}},$ where $\mathbf{0} = (0, \ldots, 0) \in \mathbb{Z}^{d}.$ This $\tau_{\theta}$ extends to a normal, faithful, finite tracial state on $\mathcal{A}_{\theta}$. Furthermore, we have
$$
\tau_{\theta}(xy) = \tau_{\theta}(yx), \,\ x, y \in \mathcal{A}_{\theta}.
$$

If we define an inner product on $\mathcal{A}_{\theta}$ by
$$
\langle x, y\rangle = \tau_{\theta}(xy^*).
$$
Then  $(\mathcal{A}_{\theta}, \langle\cdot, \cdot\rangle)$ becomes a pre-Hilbert space.

The von Neumann algebra  $\mathbb{T}^{d}_{\theta}$  is defined as the weak $*$-closure of  $\mathcal{A}_{\theta}$ in the GNS representation of $\tau_{\theta}.$ This algebra, $\mathbb{T}^{d}_{\theta},$ is referred to as the $d$-dimensional noncommutative (or quantum) torus. The trace $\tau_{\theta}$ extends to a normal, faithful, tracial state on $\mathbb{T}^{d}_{\theta},$ also denoted by $\tau_{\theta}.$ Note that the von Neumann algebra $\mathbb{T}^{d}_{\theta}$ is hyperfinite. For more details about the noncommutative torus, refer to \cite[Chapter 3, p. 190]{LSZ} and \cite{CXY, HaLeePonge, McDSX}.

In this paper, we primarily work with the von Neumann algebra $\mathcal{M}=L_{\infty}(\mathbb{T}^{d}_{\theta})$ equipped with the trace $\tau_{\theta}$.

We now introduce the noncommutative $L_{p}$-spaces associated with the von Neumann algebra $\mathbb{T}^{d}_{\theta}$ and the trace $\tau_{\theta}$.

For $0<p<\infty$, the $L_{p}$-(quasi-)norm is defined by
\begin{equation*}
    \|x\|_{L_{p}(\mathbb{T}^{d}_{\theta})} = \left( \tau_{\theta}(|x|^{p}) \right)^{\frac{1}{p}}, \ \ x \in \mathbb{T}^{d}_{\theta},
\end{equation*}
where $|x| = (x^{*}x)^{\frac{1}{2}}$. The space $L_{p}(\mathbb{T}^{d}_{\theta})$ is defined as the completion of $\{x \in \mathbb{T}^{d}_{\theta} : \|x\|_{L_{p}(\mathbb{T}^{d}_{\theta})} < \infty\}$ with respect to this (quasi-)norm. For $p = \infty$, we set $L_{\infty}(\mathbb{T}^{d}_{\theta}) = \mathbb{T}^{d}_{\theta}$ equipped with the operator norm. We define $L_{p}(\mathbb{T}^{d}_{\theta}), \ 0<p<\infty$ spaces as in \eqref{LponVNeumann}
\begin{equation}\label{LponTor}
    L_{p}(\mathbb{T}^{d}_{\theta})=\Big\{x\in S(\mathbb{T}^{d}_{\theta}):\ \mu(x)\in L_{p}(0,1)\Big\}, \ \ 0<p\leq\infty,
\end{equation}
with the corresponding quasi-norm
\begin{equation}\label{LponTorNorm}
    \|x\|_{L_{p}(\mathbb{T}^{d}_{\theta})}:=\|\mu(x)\|_{L_{p}(0,1)}, \ \ x\in L_{p}(\mathbb{T}^{d}_{\theta}), \,\, 0<p<\infty.
\end{equation}
$$\|x\|_{L_{\infty}(\mathbb{T}^{d}_{\theta})}=\mu(0,x), \,\ x \in L_{\infty}(\mathbb{T}^{d}_{\theta}). $$

Let $1 \leq p, q \leq \infty$. The Lorentz space on the noncommutative torus $\mathbb{T}^{d}_{\theta}$ is defined for $1 \leq q \leq \infty$ by
\begin{equation}
L_{p,q}(\mathbb{T}^{d}_{\theta}) = \left\{ x \in S(\mathbb{T}^{d}_{\theta}) : \mu(x) \in L_{p,q}(0,1) \right\},    
\end{equation}
and is equipped with the quasi-norm
\begin{equation}\label{LorentzNorm}
    \|x\|_{L_{p,q}(\mathbb{T}^{d}_{\theta})} = \left( \int\limits_{0}^{1} \left( t^{\frac{1}{p}} \mu(t,x) \right)^{q} \, \frac{dt}{t} \right)^{\frac{1}{q}}, \ \ q<\infty.
\end{equation}
For the case $q = \infty$, the weak-$L_{p}$ space is defined as
$$
L_{p,\infty}(\mathbb{T}^{d}_{\theta}) = \left\{ x \in S(\mathbb{T}^{d}_{\theta}) : \mu(x) \in L_{p,\infty}(0,1) \right\},
$$
and is endowed with the quasi-norm
\begin{equation}\label{weakL1}
\|x\|_{L_{p,\infty}(\mathbb{T}^{d}_{\theta})} = \sup_{t > 0} \, t^{\frac{1}{p}} \mu(t,x), \ \ x \in L_{p,\infty}(\mathbb{T}^{d}_{\theta}).
\end{equation}

As in the commutative setting, $\|\cdot\|_{L_{p,q}(\mathbb{T}^{d}_{\theta})}$ defines a quasi-norm for $1 \leq p, q \leq \infty$. Moreover, $(L_{p,q}(\mathbb{T}^{d}_{\theta}), \|\cdot\|_{L_{p,q}(\mathbb{T}^{d}_{\theta})})$ forms a quasi-Banach space with respect to this quasi-norm. Note that, analogous to the classical case, when $0< p \leq \infty$, we have $L_{p,p}(\mathbb{T}^{d}_{\theta}) = L_{p}(\mathbb{T}^{d}_{\theta})$ with equivalent norms (\cite[Chapter 6, p. 436 ]{DdPS}). 

The family $\{e^{\theta}_{n}\}_{n \in \mathbb{Z}^{d}}$ forms an orthonormal basis in $L_{2}(\mathbb{T}_{\theta}^{d})$ (see \cite{CXY, HaLeePonge}), satisfying the following properties:
\begin{equation}\label{orthogonal-relations}
\begin{split}
&\tau_{\theta}((e^{\theta}_{n})^{*}e^{\theta}_{k}) = \delta_{n,k}, \\
&\|e^{\theta}_{n}\|_{L_{\infty}(\mathbb{T}^{d}_{\theta})} = \|(e^{\theta}_{n})^{*}\|_{L_{\infty}(\mathbb{T}^{d}_{\theta})} = 1,
\end{split}
\end{equation}
where $\delta_{n,k}$ denotes the Kronecker delta. Furthermore, the inclusion $L_{q}(\mathbb{T}^{d}_{\theta}) \subset L_{p}(\mathbb{T}^{d}_{\theta})$ holds for all $0 < p \leq q \leq \infty$.
 
For $x \in L_{1}(\mathbb{T}^{d}_{\theta})$, we define its formal Fourier series by
\begin{equation*}
    x \sim \sum_{n \in \mathbb{Z}^{d}} \widehat{x}(n) e^{\theta}_{n},
\end{equation*}
where the Fourier coefficients are given by
\begin{equation}\label{eq.2.3}
    \widehat{x}(n) = \tau_{\theta}(x(e^{\theta}_{n})^{*}), \ \ n \in \mathbb{Z}^{d}.
\end{equation}
Every element $x \in L_{2}(\mathbb{T}^{d}_{\theta})$ admits a unique expansion
\begin{equation}\label{F-representation}
    x = \sum_{n \in \mathbb{Z}^{d}} \widehat{x}(n) e^{\theta}_{n}, \ \   \widehat{x}(n)=\tau_{\theta}(x(e^{\theta}_{n})^{*}),   \ \ n\in\mathbb{Z}^{d}.
\end{equation}

We have the Plancherel identity (see \cite[Formula (2.3)]{McDSX})
\begin{equation}\label{Parceveal}
    \|x\|_{L_{2}(\mathbb{T}^{d}_{\theta})} = \|\widehat{x}\|_{\ell_{2}(\mathbb{Z}^{d})}.
\end{equation}

The space $ C^{\infty}(\mathbb{T}^{d}_{\theta}) $ is defined by 
\begin{equation*} C^{\infty}(\mathbb{T}^{d}_{\theta})=\left\{x\in\mathcal{A}_{\theta} \, : \, x=\sum_{n \in \mathbb{Z}^{d}} \widehat{x}(n) e^{\theta}_{n}\text{ such that }
\sup\limits_{n\in\mathbb{Z}^{d}}(1+|n|)^k |\widehat{x}(n)|<\infty,  \forall k \in \mathbb{N}\right\},
\end{equation*}
where $|n|=(n_{1}^{2}+\cdots+n_{d}^{2})^{\frac{1}{2}}$, or equivalently, for each $k\in\mathbb{N}$ there exists a constant $C_{k}>0$ such that $|\widehat{x}(n)|\leq C_{k}(1+|n|)^{-k}$ for all $n\in\mathbb{Z}^{d}$ (see, \cite[p.1238]{McDSX} and \cite[p. 19]{XXY}). This space is regarded as the analogue of smooth functions on the noncommutative torus, as it coincides with the space of $ C^\infty $ functions in the commutative case, that is when $\theta=0$ (see \cite[p. 19]{XXY}). The $C^{\infty}(\mathbb{T}^{d}_{\theta})$ contains the all of polynomials on $\mathbb{T}^{d}_{\theta}$, i.e. $\mathcal{P}_{\theta}\subset C^{\infty}(\mathbb{T}^{d}_{\theta})$ (\cite[p. 19]{XXY}).

The space $C^{\infty}(\mathbb{T}^{d}_{\theta})$ is endowed with a canonical Fréchet topology. Its topological dual, denoted $ \mathcal{D}'(\mathbb{T}^{d}_{\theta}) $, is defined as the space of distributions on $ \mathbb{T}^{d}_{\theta} $ (\cite{McDSX, XXY}). For $x\in C^{\infty}(\mathbb{T}^{d}_{\theta})$ and $F\in \mathcal{D}'(\mathbb{T}^{d}_{\theta})$, we will use the $(\cdot , \cdot )$ to denote the duality between these two spaces: $(F,x)=F(x)$. For instance, by this duality structure, we can extend the Fourier series for $F\in\mathcal{D}'(\mathbb{T}^{d}_{\theta})$ as follows (\cite[p. 21]{XXY}):
\begin{equation*}
    \widehat{F}(n)=(F,(e_{n}^{\theta})^{*}), \ \ n\in\mathbb{Z}^{d}.
\end{equation*}
Then any $F\in\mathcal{D}'(\mathbb{T}^{d}_{\theta})$ admits the following Fourier series
\begin{equation}\label{FSonD}
    F=\sum_{n\in\mathbb{Z}^{d}}\widehat{F}(n)e^{\theta}_{n},
\end{equation}
which converges to $F$ in any (reasonable) summation method (\cite[p. 21]{XXY}).

The space of polynomials $\mathcal{P}_{\theta}$ is naturally embedded into $\mathcal{D}'(\mathbb{T}^{d}_{\theta})$ by identifying each $p \in \mathcal{P}_{\theta}$ with the distribution $x \mapsto \tau_{\theta}(p x)$ for $x \in C^{\infty}(\mathbb{T}^{d}_{\theta})$.

The spaces $L_{p}(\mathbb{T}^{d}_{\theta})$, $1\leq p\leq \infty$, can be viewed naturally as subspaces of the distribution space $\mathcal{D}'(\mathbb{T}^{d}_{\theta})$. More precisely, every element $y\in L_{p}(\mathbb{T}^{d}_{\theta})$ induces a continuous linear functional on $C^{\infty}(\mathbb{T}^{d}_{\theta})$ via (see, for example, \cite[p.~20]{XXY}):
\begin{equation*}
x \mapsto \tau_{\theta}(yx), \ x\in C^{\infty}(\mathbb{T}^{d}_{\theta})    
\end{equation*} 

For a noncommutative symmetric Banach function space $\mathcal{E}(\mathbb{T}^{d}_{\theta})$, one has the inclusions (see \cite[Theorem~4.4.6 (i), p. 273]{DdPS})
\begin{equation*}
L_{\infty}(\mathbb{T}^{d}_{\theta}) \subseteq \mathcal{E}(\mathbb{T}^{d}_{\theta}) \subseteq L_{1}(\mathbb{T}^{d}_{\theta}).   
\end{equation*}
Combining these embeddings with the above identification of $L_{p}(\mathbb{T}^{d}_{\theta})$ as subspaces of $\mathcal{D}'(\mathbb{T}^{d}_{\theta})$, we conclude that $\mathcal{E}(\mathbb{T}^{d}_{\theta})$ also embeds into $\mathcal{D}'(\mathbb{T}^{d}_{\theta})$.

\subsection{Fourier multipliers and convolution on noncommutative torus}

Let $\varphi = \{\varphi_{n}\}_{n \in \mathbb{Z}^{d}}$ be a bounded sequence of complex numbers. The Fourier multiplier $T_{\varphi}$ with symbol $\varphi$ is defined by 
\begin{equation}\label{eq.4.1}
    T_{\varphi}e_{n}^{\theta} = \varphi_{n} e_{n}^{\theta}, \,\ n\in \mathbb{Z}^{d}.
\end{equation}

If $\phi$ is a function of polynomial growth, then the associated Fourier multiplier $T_{\phi}$ is a continuous map on both $C^{\infty}(\mathbb{T}^{d}_{\theta})$ and
$\mathcal{D}'(\mathbb{T}^{d}_{\theta})$ (\cite[p. 21]{XXY}). For more discussion on Fourier multipliers on noncommutative torus, we refer the reader to \cite{CXY, McDSX, RST, STT, XXY}

For each $ s \in \mathbb{T}^{d} $, define $ \alpha_{s} $ to be the $*$-isomorphism of $ \mathbb{T}^{d}_{\theta}$ determined by  
\begin{equation*}
    \alpha_{s}(e_{n}^{\theta}) = s^{n} e_{n}^{\theta} = s_{1}^{n_{1}} \cdots s_{d}^{n_{d}}\: U_{1}^{n_{1}} \cdots U_{d}^{n_{d}}, \ \ n \in \mathbb{Z}^d.
\end{equation*}
The map $ \alpha_{s} $ preserves the trace $ \tau_{\theta} $ and thus extends to an isometry on $ L_{p}(\mathbb{T}^{d}_{\theta}) $ for every $ 0 < p \leq \infty $ (\cite{CXY, McDSX, XXY}). Consequently,
\begin{equation}\label{TransfNorm}
    \|\alpha_{s}(x)\|_{L_p(\mathbb{T}^{d}_\theta)} = \|x\|_{L_p(\mathbb{T}^{d}_\theta)}, \ \ x \in L_{p}(\mathbb{T}^{d}_\theta), \ \ 0 < p \leq \infty.
\end{equation}
\begin{definition}
    Let $f\in L_{1}(\mathbb{T}^{d})$ and $x\in L_{1}(\mathbb{T}^{d}_{\theta})$, then we define \textit{convolution} as follows (\cite[p. 1240]{McDSX})
\begin{equation}\label{Convolution}
    f\ast x=\int\limits_{\mathbb{T}^{d}}f(s)\alpha_{s}(x)ds,
\end{equation}
in the sense of Bochner integration in $L_{1}(\mathbb{T}^{d},L_{1}(\mathbb{T}^{d}_{\theta}))$.
 \end{definition}
  In terms of Fourier multipliers we have (\cite[Section 2.2]{McDSX})
\begin{equation}\label{ConvMult}
    f\ast x=T_{\widehat{f}} (x),
\end{equation}
where $\widehat{f}$ is the Fourier transform of $f$ on $\mathbb{T}^{d}$ (see, \cite[Chapter 3]{G2014}).

\subsection{Differential calculus for noncommutative torus}
Many aspects of harmonic analysis on the classical torus $\mathbb{T}^{d}$ extend naturally to the noncommutative torus $\mathbb{T}^{d}_{\theta}$ (\cite{HaLeePonge, McDSX, RST, STZ, XXY}). We adopt the standard differential structure on the quantum torus
$\mathbb{T}^{d}_\theta$ as in \cite[Section~2]{XXY}. For each $1\leq j\leq d$ we define a partial differentiation operators $\partial_{j}$ by
\begin{equation*}
    \partial_j(e^{\theta}_{n}) =  i\,n_j\, e^{\theta}_{n},
    \ \ n=(n_1,\dots,n_d)\in\mathbb{Z}^d.
\end{equation*}
Each $\partial_{j}$ may be viewed as a densely defined closed (unbounded) operator on $L_{2}(\mathbb{T}^{d}_{\theta})$, whose adjoint is $-\partial_{j}$. The Laplacian on $\mathbb{T}^{d}_\theta$ is defined by $ \Delta_{\theta}=\partial_{1}^{2}+\cdots+\partial_{d}^{2}$. Then $\Delta_{\theta} = -(\partial_1^*\partial_1 + \cdots + \partial_d^*\partial_d)$, so $-\Delta_{\theta}$ is a self-adjoint and positive operator on $L_{2}(\mathbb{T}^{d}_{\theta})$ with spectrum $\{|n|^2 : n \in \mathbb{Z}^{d}\}$ (\cite[Lemma 2.1, p. 19]{XXY}), and we have
\begin{equation}\label{NCLaplacian}
   - \Delta_{\theta}(e_{n}^{\theta})=|n|^{2}e_{n}^{\theta}, \ \ n\in\mathbb{Z}^{d}.
\end{equation}

By duality relation between $C^{\infty}(\mathbb{T}^{d}_\theta)$ and $\mathcal{D}'(\mathbb{T}^{d}_\theta)$, each $\partial_j$ extends to $\mathcal{D}'(\mathbb{T}^{d}_\theta)$ by the following formula
\begin{equation*}
    ( \partial_{j}F,x)=-( F,\partial_{j}x ), \ \ x\in C^{\infty}(\mathbb{T}^{d}_\theta), \ \ F\in \mathcal{D}'(\mathbb{T}^{d}_\theta).
\end{equation*}
Thus, $\Delta_{\theta}$ extends to $\mathcal{D}'(\mathbb{T}^{d}_\theta)$ as well
\begin{equation*}
    (\Delta_{\theta}F,x)=( F,\Delta_{\theta}x ), \ \ x\in C^{\infty}(\mathbb{T}^{d}_\theta), \ \ F\in \mathcal{D}'(\mathbb{T}^{d}_\theta).
\end{equation*}
For $s \in \mathbb{R}$, let $J^{s} $ denote the Bessel potential of order $s$, given by $(1 - \Delta_{\theta})^{\frac{s}{2}}$. Equivalently, via the functional calculus of the self-adjoint operator $-\Delta_{\theta}$, the Bessel potential can be defined as follows

\begin{equation}\label{J^s}
   J^{s} e_{n}^{\theta} = (1+|n|^2)^{\frac{s}{2}}e_{n}^{\theta},\  n\in\mathbb{Z}^{d}.
\end{equation}
Let $x\in C^{\infty}(\mathbb{T}^{d}_{\theta})$. If $s\le 0$, then the sequence $(1+|n|^{2})^{s/2}$, $n\in\mathbb{Z}^{d}$, is bounded by $1$ and tends to zero as $|n|\to\infty$; hence, multiplication by it preserves rapid decay. If $s>0$, the sequence $(1+|n|^{2})^{s/2}$ has a polynomial growth of order $s$, but rapid decay is still preserved. Indeed, for any $k\in\mathbb{N}$, choose $k'=k+\lceil s\rceil$. Then
\begin{equation*}
    (1+|n|^2)^{\frac{s}{2}}|\widehat{x}(n)|\leq C (1+|n|)^{s} C_{k'}(1+|n|)^{-k'}\leq C'(1+|n|)^{-k}, \,\ n\in\mathbb Z^d.
\end{equation*}
Hence, for every $s\in \mathbb R^d$, the operator $J^{s}$ maps $C^{\infty}(\mathbb{T}^{d}_\theta)$ into itself. By the foregoing, $J^{s}$ is extended to $\mathcal{D}'(\mathbb{T}^{d}_\theta)$
\begin{equation*}
    ( J^{s}  F,x)=( F,J^{s}  x), \ \ x\in C^{\infty}(\mathbb{T}^{d}_\theta), \ \ F\in \mathcal{D}'(\mathbb{T}^{d}_\theta).
\end{equation*}
Moreover, for each $s \in \mathbb{R}$ the operator $J^{s}$ is bijective on $\mathcal{D}'(\mathbb{T}^{d}_\theta)$ and it acts as a Fourier multiplier on $\mathcal{D}'(\mathbb{T}^{d}_\theta)$ with the symbol $(1+|n|^{2})^{\frac{s}{2}}, \ n\in\mathbb{Z}^{d}$ (see \cite[p. 1240]{McDSX} and \cite[p. 21]{XXY}).
Further, an element of $\mathcal{D}'(\mathbb{T}^{d}_\theta)$ is also denoted by $x$.

\subsection{Noncommutative fractional symmetric Sobolev spaces}\label{NCSobolevSpace}
Let $\mathcal{E}(\mathbb{T}^{d}_{\theta})$ be a noncommutative symmetric space on $\mathbb{T}^{d}_{\theta}.$ We define the Sobolev space associated with $\mathcal{E}(\mathbb{T}^{d}_{\theta}).$

For $s \in \mathbb{R}$, let $J^{s}  $ denote the Bessel potential of order $s$, given by \eqref{J^s}. The fractional symmetric Sobolev space of order $s \in \mathbb{R}$ is defined as
\begin{equation}\label{FractionalSobolev}
H^{s}_{\mathcal{E}}(\mathbb{T}^{d}_{\theta}) = \left\{ x \in \mathcal{D}'(\mathbb{T}^{d}_{\theta}) : J^{s}   x \in \mathcal{E}(\mathbb{T}^{d}_{\theta})\right\},
\end{equation}
such that
\begin{equation}\label{SobolevNorm}
\|x\|_{H^{s}_{\mathcal{E}}(\mathbb{T}^{d}_{\theta})} = \|J^{s}   x\|_{\mathcal{E}(\mathbb{T}^{d}_{\theta})}<\infty.  
\end{equation}

Since $J^0$ is the identity operator, we have $H^0_{\mathcal{E}}(\mathbb{T}^{d}_{\theta}) = \mathcal{E}(\mathbb{T}^{d}_{\theta})$. 
\begin{prop} $H^{s}_{\mathcal{E}}(\mathbb{T}^{d}_{\theta})$ is a Banach space.
\end{prop}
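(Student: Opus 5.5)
The plan is to transport the Banach structure of $\mathcal{E}(\mathbb{T}^{d}_{\theta})$ to $H^{s}_{\mathcal{E}}(\mathbb{T}^{d}_{\theta})$ through the Bessel potential $J^{s}$. We use two facts recorded above: $J^{s}$ is a linear bijection of $\mathcal{D}'(\mathbb{T}^{d}_{\theta})$ with inverse $J^{-s}$, and $\mathcal{E}(\mathbb{T}^{d}_{\theta})\subseteq L_{1}(\mathbb{T}^{d}_{\theta})\subset\mathcal{D}'(\mathbb{T}^{d}_{\theta})$. With these, the restriction
\[
J^{s}\colon H^{s}_{\mathcal{E}}(\mathbb{T}^{d}_{\theta})\to \mathcal{E}(\mathbb{T}^{d}_{\theta})
\]
is a linear bijection. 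Indeed, if $y\in\mathcal{E}(\mathbb{T}^{d}_{\theta})$, then $x:=J^{-s}y\in\mathcal{D}'(\mathbb{T}^{d}_{\theta})$ and $J^{s}x=y$, so $x$ lies in $H^{s}_{\mathcal{E}}(\mathbb{T}^{d}_{\theta})$ and the map is onto. By the definition \eqref{SobolevNorm} of the norm, this bijection is an isometry.

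First I will check that \eqref{SobolevNorm} really is a norm. Homogeneity and the triangle inequality come from the linearity of $J^{s}$ and the corresponding properties of $\|\cdot\|_{\mathcal{E}}$. For definiteness, suppose $\|J^{s}x\|_{\mathcal{E}}=0$. Then $J^{s}x=0$ as an element of $\mathcal{E}(\mathbb{T}^{d}_{\theta})$, hence also as a distribution, and injectivity of $J^{s}$ on $\mathcal{D}'(\mathbb{T}^{d}_{\theta})$ gives $x=0$.

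Next, completeness. Let $(x_k)$ be Cauchy in $H^{s}_{\mathcal{E}}(\mathbb{T}^{d}_{\theta})$. Then $(J^{s}x_k)$ is Cauchy in $\mathcal{E}(\mathbb{T}^{d}_{\theta})$, which is complete, so $J^{s}x_k\to y$ for some $y\in\mathcal{E}(\mathbb{T}^{d}_{\theta})$. Put $x:=J^{-s}y\in\mathcal{D}'(\mathbb{T}^{d}_{\theta})$. Since $J^{s}x=y\in\mathcal{E}(\mathbb{T}^{d}_{\theta})$, we have $x\in H^{s}_{\mathcal{E}}(\mathbb{T}^{d}_{\theta})$, and
\[
\|x_k-x\|_{H^{s}_{\mathcal{E}}}=\|J^{s}x_k-y\|_{\mathcal{E}}\to0 .
\]
In the quasi-Banach case the same argument gives a complete quasi-normed space.

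The only step needing care is the bookkeeping of identifications, namely that $J^{s}$ and $J^{-s}$ are mutually inverse on all of $\mathcal{D}'(\mathbb{T}^{d}_{\theta})$. This follows from the fact that they act on Fourier coefficients as multiplication by $(1+|n|^{2})^{\pm s/2}$, together with the uniqueness of the Fourier expansion \eqref{FSonD}. In particular, elements of $\mathcal{E}(\mathbb{T}^{d}_{\theta})$ regarded as distributions are determined by their Fourier coefficients, which is exactly what the injectivity argument above uses.
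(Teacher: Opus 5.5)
Your proof is correct for symmetric Banach $\mathcal{E}(\mathbb{T}^{d}_{\theta})$. The norm-axiom part, where definiteness comes from injectivity of $J^{s}$ on $\mathcal{D}'(\mathbb{T}^{d}_{\theta})$ (uniqueness of Fourier coefficients), matches the paper. Your completeness step takes a different route.

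The paper first gets $J^{s}x_k\to y_s$ in $\mathcal{E}(\mathbb{T}^{d}_{\theta})$. It then produces a candidate limit by ``taking $s=0$'', i.e.\ by asserting $x_k\to y_0$ in $\mathcal{E}(\mathbb{T}^{d}_{\theta})$. Finally it verifies $J^{s}y_0=y_s$ by pairing with $x\in C^{\infty}(\mathbb{T}^{d}_{\theta})$ and using $(J^{s}x_k,x)=(x_k,J^{s}x)$.

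You instead first show that $J^{s}\colon H^{s}_{\mathcal{E}}(\mathbb{T}^{d}_{\theta})\to\mathcal{E}(\mathbb{T}^{d}_{\theta})$ is a surjective isometry, which is the content of the paper's later Lemma~\ref{lem:J^s-iso}. You then simply set $x:=J^{-s}y$.

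Your route is shorter and works uniformly for all $s\in\mathbb{R}$. It also avoids the weak point of the paper's argument. For fixed $s\neq0$, being Cauchy in $H^{s}_{\mathcal{E}}$ does not by itself give convergence of $x_k$ in $\mathcal{E}(\mathbb{T}^{d}_{\theta})$:
\begin{itemize}
\item for $s>0$ one would need boundedness of $J^{-s}$ on $\mathcal{E}(\mathbb{T}^{d}_{\theta})$;
\item for $s<0$ the $x_k$ need not lie in $\mathcal{E}(\mathbb{T}^{d}_{\theta})$ at all.
\end{itemize}
The paper's duality check is the kind of argument needed when $J^{s}$ is only known to be continuous on $\mathcal{D}'(\mathbb{T}^{d}_{\theta})$. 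Since bijectivity is available, your direct construction is the cleaner one.

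One correction: drop your closing remark about the quasi-Banach case. Your surjectivity step uses $\mathcal{E}(\mathbb{T}^{d}_{\theta})\subseteq L_{1}(\mathbb{T}^{d}_{\theta})\subset\mathcal{D}'(\mathbb{T}^{d}_{\theta})$, and that is exactly where the Banach hypothesis enters. For example, with $\mathcal{E}=L_{1/2}(\mathbb{T}^{d}_{\theta})$ only the elements of $L_{1}(\mathbb{T}^{d}_{\theta})$ are seen as distributions. Then $H^{s}_{\mathcal{E}}$ would be isometric to $L_{1}(\mathbb{T}^{d}_{\theta})$ with the $L_{1/2}$ quasi-norm. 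That subspace is dense but not closed in $L_{1/2}(\mathbb{T}^{d}_{\theta})$, so it is not complete.
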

\begin{proof} It is clear from the definition that $H^{s}_{\mathcal{E}}(\mathbb{T}^{d}_{\theta})$ is a linear space. We first check the axioms of norm. Positivity and homogeneity of \eqref{SobolevNorm} are obvious. 

Let us take any $x\in H^{s}_{\mathcal{E}}(\mathbb{T}^{d}_{\theta})$. Then
$\|x\|_{H^{s}_{\mathcal{E}}(\mathbb{T}^{d}_{\theta})} = 0$ if and only if $J^{s}x = 0$
in $\mathcal{E}(\mathbb{T}^{d}_\theta)$. Since $\mathcal{E}(\mathbb{T}^{d}_\theta)\subset
\mathcal{D}'(\mathbb{T}^{d}_{\theta})$, this implies $J^{s}x=0$ in
$\mathcal{D}'(\mathbb{T}^{d}_{\theta})$. We show that $J^{s}x=0$ implies $x=0$.
Since $x\in H^{s}_{\mathcal{E}}(\mathbb{T}^{d}_{\theta})\subset \mathcal{D}'(\mathbb{T}^{d}_{\theta})$, we can write $x$ in terms of its Fourier coefficients in the sense of distributions (see, for example \cite[p. 21]{XXY}):
\begin{equation*}
    x \stackrel{\mathcal{D}'(\mathbb{T}^{d}_{\theta})}{=} \sum_{n\in\mathbb{Z}^d} \widehat x(n)\,e_n^\theta.
\end{equation*}
Then by the definition of $J^{s}$ \eqref{J^s}, we have
\begin{equation*}
    J^{s} x = \sum_{n\in\mathbb{Z}^d}
        (1+|n|^2)^{\frac{s}{2}} \widehat x(n)\,e_n^\theta.
\end{equation*}
In the commutative case $\theta=0$, distributions on $\mathbb{T}^d$ are uniquely determined by their Fourier coefficients (see \cite[Chapter III, §3.2.2(ii), p.~144]{SchmeisserTriebel} or \cite[Chapter III, Proposition~3.2.4, p.~184]{G2014}). The same argument applies verbatim in the noncommutative setting $\mathbb{T}^d_\theta$. Therefore, since $J^{s}x=0$ in $\mathcal{D}'(\mathbb{T}^{d}_{\theta}),$ it follows that all Fourier coefficients of $J^{s}x$ vanish 
\begin{equation*}
    (1+|n|^2)^{\frac{s}{2}} \widehat x(n) = 0
    \ \ \text{for all}\ \ n\in\mathbb{Z}^d.
\end{equation*}
However, $(1+|n|^2)^{\frac{s}{2}} > 0$  for all $n\in\mathbb{Z}^d$, this implies $\widehat x(n)=0$
for all $n\in\mathbb{Z}^d$, and hence $x=0$ in $\mathcal{D}'(\mathbb{T}^{d}_{\theta})$. Thus,
\begin{equation*}
    \|x\|_{H^{s}_{\mathcal{E}}(\mathbb{T}^{d}_\theta)}=0 \textnormal{  if and only if  } x = 0.
\end{equation*}
Finally, since $\mathcal{E}(\mathbb{T}^{d}_\theta)$ is a linear normed space, by linearity of
$J^{s}  $, for any $x,y\in H^{s}_{\mathcal{E}}(\mathbb{T}^{d}_\theta)$ we have,
\begin{equation*}
\begin{split}
    \|x+y\|_{H^{s}_{\mathcal{E}}(\mathbb{T}^{d}_\theta)}
    &= \|J^{s}  (x+y)\|_{\mathcal{E}(\mathbb{T}^{d}_\theta)}
    = \|J^{s}   x + J^{s}   y\|_{\mathcal{E}(\mathbb{T}^{d}_\theta)}\\
    & \leq \|J^{s}   x\|_{\mathcal{E}(\mathbb{T}^{d}_\theta)} + \|J^{s}   y\|_{\mathcal{E}(\mathbb{T}^{d}_\theta)}
    = \|x\|_{H^{s}_{\mathcal{E}}(\mathbb{T}^{d}_\theta)} + \|y\|_{H^{s}_{\mathcal{E}}(\mathbb{T}^{d}_\theta)}.
\end{split}
\end{equation*}
Thus, $H^{s}_{\mathcal{E}}(\mathbb{T}^{d}_\theta)$ is a linear normed space.

Now, let us show its completeness. Indeed, let $\{x_{k}\}_{k \geq 1}$ be a Cauchy sequence in $H^{s}_{\mathcal{E}}(\mathbb{T}^{d}_{\theta})$. By definition of the norm, the sequence $\{J^{s}x_{k}\}_{k\geq 1}$ is Cauchy sequence in $\mathcal{E}(\mathbb{T}^{d}_{\theta}).$ Since $\mathcal{E}(\mathbb{T}^{d}_{\theta})$ is complete, it follows that there exists $y_{s}\in\mathcal{E}(\mathbb{T}^{d}_{\theta})$ such that $J^{s}x_{k}\to y_{s}$ in $\mathcal{E}(\mathbb{T}^{d}_{\theta})$ as $k\to\infty$.
Since $ \mathcal{E}(\mathbb{T}^{d}_{\theta})$ is continuously embedded into $\mathcal{D}'(\mathbb{T}^{d}_{\theta}),$ it follows that $J^{s}x_{k}\to y_{s}$ as $k\to\infty$ in $\mathcal{D}'(\mathbb{T}^{d}_{\theta})$. In particular, for $s=0$ we have $J^{0}x_{k}=x_{k}\to y_{0}$ in $ \mathcal{E}(\mathbb{T}^{d}_{\theta})$ as $k\to\infty.$

Let $x\in C^{\infty}(\mathbb{T}^{d}_{\theta})$ be arbitrary. Then, we have
\begin{equation*}
( J^{s}x_{k},x )=(x_{k} ,J^{s}x )\to (y_{0},J^{s}x )=(J^{s}y_{0} ,x ),
\ \ k\to\infty.
\end{equation*}
On the other hand, since $J^{s}x_{k}\to y_{s}$ in $\mathcal{E}(\mathbb{T}^{d}_{\theta})$ and hence, in $\mathcal{D}'(\mathbb{T}^{d}_{\theta})$, we also have
\begin{equation*}
(J^{s}x_{k},x ) \to (y_{s},x),
\ \ k\to\infty.
\end{equation*}

Since $(J^{s}x_{k},x )=(x_{k} ,J^{s}x), \  k\geq 1,$ we obtain that $( J^{s}y_{0},x)=(y_{s} ,x)$  for all $x\in C^{\infty}(\mathbb{T}^{d}_{\theta})$, i.e.
\begin{equation*}
J^{s}y_{0}=y_{s}\, \text{ in }\mathcal{D}'(\mathbb{T}^{d}_{\theta}).
\end{equation*}
Thus $J^{s}y_{0}\in\mathcal{E}(\mathbb{T}^{d}_{\theta})$, so $y_{0}\in H^{s}_{\mathcal{E}}(\mathbb{T}^{d}_{\theta})$. Finally,
\begin{equation*}
\|x_{k}-y_{0}\|_{H^{s}_{\mathcal{E}}}
=\|J^{s}(x_{k}-y_{0})\|_{\mathcal{E}}
=\|J^{s}x_{k}-J^{s}y_{0}\|_{\mathcal{E}}=\|J^{s}x_{k}-y_{s}\|_{\mathcal{E}}
\to 0, \ k\to\infty, 
\end{equation*}
showing that $x_{k}\to y_{0}$ as $k\to\infty$ in $H^{s}_{\mathcal{E}}(\mathbb{T}^{d}_{\theta})$. Hence, $H^{s}_{\mathcal{E}}(\mathbb{T}^{d}_{\theta})$ is a Banach space.

\end{proof}

In particular, when $\mathcal{E}(\mathbb{T}^{d}_{\theta}) = L_p(\mathbb{T}^{d}_{\theta})$, we define the Sobolev space (\cite{McDSX, XXY}) of order $s \in \mathbb{R}$ on $\mathbb{T}^{d}_{\theta}$ by
$$
H_{L_{p}}^{s}(\mathbb{T}^{d}_{\theta})=H^{s}_{p}(\mathbb{T}^{d}_{\theta}) := \left\{ x\in \mathcal{D}'(\mathbb{T}^{d}_{\theta})  : J^{s}   x \in L_{p}(\mathbb{T}^{d}_{\theta}) \right\}.
$$

\subsection{Mittag-Leffler function and Caputo fractional derivative}
Let $\alpha,\beta\in\mathbb{C}$ be such that the real part of $\alpha$ is $\mathfrak{R}(\alpha)>0.$ 

The two-parameter Mittag-Leffler function is defined by the following formula
\begin{equation*}
E_{\alpha, \beta}(z) = \sum_{k=0}^{+\infty} \frac{z^k}{\Gamma(\alpha k + \beta)}, \ \ z \in \mathbb{C},
\end{equation*}
where $\Gamma(\cdot)$ is the classical Gamma function. In particular, if $\alpha$ is a real number such that $0<\alpha< 2$ and $\beta\in\mathbb{C},$  then we frequently use the following estimate
\begin{equation}\label{ML-estimate}
|E_{\alpha,\beta}(z)| \leq \frac{C}{1+|z|}, \ \  z\in\mathbb{C}, \ \ \gamma\leq |\arg(z)|\leq \pi, 
\end{equation}
where $\gamma$ is a real number satisfying  $\frac{\pi\alpha}{2}\leq\gamma\leq\min\{\pi, \pi\alpha\}$ and $C$ is a positive constant  (see \cite[Chapter 1, Theorem 1.4]{Podlubny}). Note that, $E_{\alpha,1}(\cdot)$ is briefly denoted by $E_{\alpha}(\cdot),$ that is the usual Mittag-Leffler function.
The series is absolutely and locally uniformly convergent for the given parameters (see, \cite[Chapter 1]{Podlubny}).

Now, let $0<\alpha<1$ and let $f$ be a continuous function on $(0,\infty)$, then by $\partial_{t}^{\alpha}$ we define Caputo fractional derivative (\cite[Chapter 2]{Podlubny}) of $f$: 

\begin{equation*} 
 \partial_{t}^{\alpha}f(t)=\frac{d}{dt}\frac{1}{ \Gamma\left( 1-\alpha \right)}\int\limits_{0}^{t} (t-s)^{-\alpha} f\left( s \right){{d}s}, \ \ t>0. 
\end{equation*}
It is known that, if $f$ is differentiable, then, when $\alpha\to1$, the Caputo fractional derivative tends to the classical derivative of the function $f$. In general, if $n\in\mathbb{N}$, then $\lim\limits_{\alpha\to n}\partial_{t}^{\alpha}f(t)=f^{(n)}(t), \ t>0$.

Let, $f$ and $g$ are given functions. We use the notation $f(t)\sim g(t)$ as $t\to 0$, if $f$ and $g$ \textit{asymptotically equivalent}, i.e. $\lim\limits_{t\to0}\frac{f(t)}{g(t)}=1$. We also write $A\lesssim B$ if there is a constant $c > 0$ such that $A \leq c B$. We write $A\asymp B$ if both
$A \lesssim B$ and $A \gtrsim B$ hold, possibly with different constants.

\section{Noncommutative fractional symmetric Sobolev spaces}
Now, we give some characterizations of fractional symmetric Sobolev space on noncommutative torus. Before that, let us prove one lemma about technical role of $J^{s}  $.

\begin{lem}\label{lem:J^s-iso}
For every $s \in \mathbb{R}$, the operator $J^{s}$ defined by \eqref{J^s} is a surjective linear isometry
\begin{equation*}
J^{s} : H^{s}_{\mathcal{E}}(\mathbb{T}^{d}_{\theta})
\to \mathcal{E}(\mathbb{T}^{d}_{\theta}),
\end{equation*}
with inverse given by $J^{-s} $.
\end{lem}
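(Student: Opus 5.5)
The proof is essentially a bookkeeping argument based on the group law for Bessel potentials on $\mathcal{D}'(\mathbb{T}^{d}_{\theta})$. First I will establish that
\begin{equation*}
J^{s}J^{t}=J^{s+t}, \qquad s,t\in\mathbb{R},
\end{equation*}
as operators on $\mathcal{D}'(\mathbb{T}^{d}_{\theta})$. On $C^{\infty}(\mathbb{T}^{d}_{\theta})$ this holds because the symbols multiply: $(1+|n|^{2})^{\frac{s}{2}}(1+|n|^{2})^{\frac{t}{2}}=(1+|n|^{2})^{\frac{s+t}{2}}$. For $F\in\mathcal{D}'(\mathbb{T}^{d}_{\theta})$ and $x\in C^{\infty}(\mathbb{T}^{d}_{\theta})$ the duality definition then gives
\begin{equation*}
(J^{s}J^{t}F,x)=(F,J^{t}J^{s}x)=(F,J^{s+t}x)=(J^{s+t}F,x).
\end{equation*}
In particular, $J^{s}J^{-s}=J^{-s}J^{s}=J^{0}=\mathrm{id}$ on $\mathcal{D}'(\mathbb{T}^{d}_{\theta})$. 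This recovers the bijectivity of $J^{s}$ already noted in the preliminaries.

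Next come well-definedness and isometry. If $x\in H^{s}_{\mathcal{E}}(\mathbb{T}^{d}_{\theta})$, then by the definition \eqref{FractionalSobolev} we have $J^{s}x\in\mathcal{E}(\mathbb{T}^{d}_{\theta})$. So $J^{s}$ maps $H^{s}_{\mathcal{E}}(\mathbb{T}^{d}_{\theta})$ into $\mathcal{E}(\mathbb{T}^{d}_{\theta})$, and it is linear. By \eqref{SobolevNorm},
\begin{equation*}
\|J^{s}x\|_{\mathcal{E}(\mathbb{T}^{d}_{\theta})}=\|x\|_{H^{s}_{\mathcal{E}}(\mathbb{T}^{d}_{\theta})},
\end{equation*}
which is the isometry property. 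Injectivity follows immediately from the isometry, since the Sobolev norm is a genuine norm by the preceding Proposition.

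For surjectivity, take $y\in\mathcal{E}(\mathbb{T}^{d}_{\theta})$. Using the embedding $\mathcal{E}(\mathbb{T}^{d}_{\theta})\subset\mathcal{D}'(\mathbb{T}^{d}_{\theta})$, view $y$ as a distribution and set $x:=J^{-s}y\in\mathcal{D}'(\mathbb{T}^{d}_{\theta})$. The group law gives $J^{s}x=J^{0}y=y\in\mathcal{E}(\mathbb{T}^{d}_{\theta})$. Hence $x\in H^{s}_{\mathcal{E}}(\mathbb{T}^{d}_{\theta})$ and $J^{s}x=y$.

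For the inverse, the surjectivity argument shows that $J^{-s}$ maps $\mathcal{E}(\mathbb{T}^{d}_{\theta})$ into $H^{s}_{\mathcal{E}}(\mathbb{T}^{d}_{\theta})$. The identities $J^{s}J^{-s}=\mathrm{id}$ on $\mathcal{E}(\mathbb{T}^{d}_{\theta})$ and $J^{-s}J^{s}=\mathrm{id}$ on $H^{s}_{\mathcal{E}}(\mathbb{T}^{d}_{\theta})$ then follow from the group law. So $J^{-s}$ is the two-sided inverse, and it is itself an isometry.

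The only step needing care is the group law on distributions, specifically that $J^{s}$ is symmetric under the duality pairing. This holds because $J^{s}$ is a real diagonal Fourier multiplier. I would check it on the basis elements $e_{n}^{\theta}$ through the Fourier expansion \eqref{FSonD}, then extend by linearity and continuity of the pairing. Everything else is immediate from the definitions.
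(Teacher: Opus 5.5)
Your proposal is correct and follows essentially the same route as the paper: isometry comes straight from the definition of $\|\cdot\|_{H^{s}_{\mathcal{E}}(\mathbb{T}^{d}_{\theta})}$, and surjectivity comes from setting $x=J^{-s}y$ for $y\in\mathcal{E}(\mathbb{T}^{d}_{\theta})\subset\mathcal{D}'(\mathbb{T}^{d}_{\theta})$, with the only difference being that you justify the group law $J^{s}J^{t}=J^{s+t}$ on $\mathcal{D}'(\mathbb{T}^{d}_{\theta})$ via duality while the paper just cites invertibility of $J^{s}$ on distributions. One small correction: under the bilinear pairing $(p,x)=\tau_{\theta}(px)$, what makes the duality extension agree with the multiplier action is that the symbol is even, $(1+|n|^{2})^{\frac{s}{2}}=(1+|-n|^{2})^{\frac{s}{2}}$, not that it is real, though here this holds automatically.
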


\begin{proof} Let us fix $s\in\mathbb{R}$. 
By the definition of $J^{s}$ is invertable in $\mathcal{D}'(\mathbb{T}^{d}_{\theta})$.
Then, for any $x \in H^{s}_{\mathcal{E}}(\mathbb{T}^{d}_{\theta})$ we have
\begin{equation*}
\|J^{s}   x\|_{\mathcal{E}(\mathbb{T}^{d}_{\theta})}
= \|x\|_{H^{s}_{\mathcal{E}}(\mathbb{T}^{d}_{\theta})},
\end{equation*}
so $J^{s}  $ is an isometry from $H^{s}_{\mathcal{E}}(\mathbb{T}^{d}_{\theta})$ into $\mathcal{E}(\mathbb{T}^{d}_{\theta})$.

To see that $J^{s}  $ is onto $\mathcal{E}(\mathbb{T}^{d}_{\theta})$, let $y \in \mathcal{E}(\mathbb{T}^{d}_{\theta})$ be arbitrary. Since $y \in \mathcal{D}'(\mathbb{T}^{d}_{\theta})$ and $J^{s}  $ is invertible on $\mathcal{D}'(\mathbb{T}^{d}_{\theta})$, we can define $x = J^{-s}  y \in \mathcal{D}'(\mathbb{T}^{d}_{\theta})$. Then
\begin{equation*}
J^{s}   x = J^{s}   J^{-s}  y = y \in \mathcal{E}(\mathbb{T}^{d}_{\theta}),
\end{equation*}
so $x \in H^{s}_{\mathcal{E}}(\mathbb{T}^{d}_{\theta})$ and $J^{s}   x = y$. Hence, $J^{s}  $ is surjective.

Therefore, $J^{s}$ is a surjective linear isometry between $H^{s}_{\mathcal{E}}(\mathbb{T}^{d}_{\theta})$ and $\mathcal{E}(\mathbb{T}^{d}_{\theta})$.
\end{proof}

\begin{thm}\label{thm:HEs-properties}
Let $s\in\mathbb{R}$. Let $E(0,1)$ be a separable symmetric Banach function space on $(0,1)$ and $\mathcal{E}(\mathbb{T}^{d}_{\theta})$ be the associated noncommutative symmetric Banach space on $\mathbb{T}^{d}_{\theta}$. Then
\begin{itemize}
    \item[(i)] A linear space of polynomials $\mathcal{P}_{\theta}$ is dense in
    $\mathcal{E}(\mathbb{T}^{d}_{\theta})$.
    
    \item[(ii)] A linear space of polynomials $\mathcal{P}_{\theta}$ is dense in
    $H^{s}_{\mathcal{E}}(\mathbb{T}^{d}_{\theta})$. 
    Consequently,
    $C^{\infty}(\mathbb{T}^{d}_{\theta})$ is dense in
    $H^{s}_{\mathcal{E}}(\mathbb{T}^{d}_{\theta})$.
    
    \item[(iii)] $H^{s}_{\mathcal{E}}(\mathbb{T}^{d}_{\theta})$ is separable.
\end{itemize}
\end{thm}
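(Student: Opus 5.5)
The plan is to prove (i) first, derive (ii) from it through the isometry of Lemma \ref{lem:J^s-iso}, and obtain (iii) from (ii) by a rational-coefficient argument.

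\emph{Step (i): reduce to bounded elements.} Since $E(0,1)$ is separable, it is order continuous: if $\mu(y_k)\downarrow 0$ a.e. then $\|y_k\|_E\to 0$. Given $x\in\mathcal{E}(\mathbb{T}^d_\theta)$, write $x=u|x|$ and truncate, setting $x_N=u|x|e_{[0,N]}(|x|)\in\mathbb{T}^d_\theta$. Then $\mu(x-x_N)\le \mu(t,x)\chi_{(0,\tau_\theta(e_{(N,\infty)}(|x|)))}(t)$, and this tends to $0$ pointwise as $N\to\infty$ because $\tau_\theta(e_{(N,\infty)}(|x|))\to 0$. Order continuity then gives $\|x-x_N\|_{\mathcal{E}}\to 0$. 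So it suffices to approximate bounded elements.

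\emph{Step (i): approximate bounded elements by polynomials.} For $y\in\mathbb{T}^d_\theta$ I will use Fejér means, $F_N\ast y$, with $F_N$ the Fejér kernel on $\mathbb{T}^d$. By \eqref{ConvMult} these are polynomials, and by \eqref{TransfNorm} and $\|F_N\|_{L_1}=1$ we have $\|F_N\ast y\|_\infty\le\|y\|_\infty$. Moreover $F_N\ast y\to y$ in $L_2(\mathbb{T}^d_\theta)$ by Plancherel \eqref{Parceveal}, hence in measure. The step I expect to be the main obstacle is passing from convergence in measure to convergence in the $\mathcal{E}$-norm. Here the differences $z_N=y-F_N\ast y$ are uniformly bounded by $2\|y\|_\infty$ and $\tau_\theta(e_{(\varepsilon,\infty)}(|z_N|))\to0$ for every $\varepsilon>0$. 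This gives $\mu(z_N)\le \varepsilon+2\|y\|_\infty\chi_{(0,\delta_N)}$ with $\delta_N\to0$. Therefore
\[
\|z_N\|_{\mathcal{E}}\le \varepsilon\|1\|_E+2\|y\|_\infty\|\chi_{(0,\delta_N)}\|_E ,
\]
and the last term tends to $0$ by order continuity of $E$, since $E$ is separable. Letting $\varepsilon\to0$ gives $\|z_N\|_{\mathcal{E}}\to 0$ and completes (i).

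\emph{Step (ii).} By Lemma \ref{lem:J^s-iso}, $J^s:H^s_{\mathcal{E}}(\mathbb{T}^d_\theta)\to\mathcal{E}(\mathbb{T}^d_\theta)$ is a surjective isometry with inverse $J^{-s}$. By \eqref{J^s}, $J^{-s}$ maps $\mathcal{P}_\theta$ onto $\mathcal{P}_\theta$, since it only rescales each coefficient. Given $x\in H^s_{\mathcal{E}}$, use (i) to choose polynomials $p_k\to J^sx$ in $\mathcal{E}$. Then $J^{-s}p_k\in\mathcal{P}_\theta$ and
\[
\|x-J^{-s}p_k\|_{H^s_{\mathcal{E}}}=\|J^sx-p_k\|_{\mathcal{E}}\to0 .
\]
Since $\mathcal{P}_\theta\subset C^\infty(\mathbb{T}^d_\theta)\subset H^s_{\mathcal{E}}$, the latter inclusion holding because $J^s$ preserves $C^\infty$ and $C^\infty\subset L_\infty\subset\mathcal{E}$, density of $C^\infty(\mathbb{T}^d_\theta)$ follows.

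\emph{Step (iii).} The countable set of polynomials with coefficients in $\mathbb{Q}+i\mathbb{Q}$ is dense in $\mathcal{P}_\theta$ for the $H^s_{\mathcal{E}}$-norm. Indeed, by the triangle inequality,
\[
\Big\|\sum \alpha_n e_n^\theta\Big\|_{H^s_{\mathcal{E}}}\le\sum|\alpha_n|(1+|n|^2)^{s/2}\|e^\theta_n\|_{\mathcal{E}},
\]
and $\|e^\theta_n\|_{\mathcal{E}}=\|1\|_E$ because $e_n^\theta$ is unitary. So perturbing the finitely many coefficients slightly changes the norm slightly. Combined with (ii), this countable set is dense in $H^s_{\mathcal{E}}(\mathbb{T}^d_\theta)$, which proves separability.
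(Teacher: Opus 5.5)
Your proof is correct and follows the paper's route. For (i) you use Fejér means of bounded elements together with density of $L_\infty(\mathbb{T}^d_\theta)$; for (ii) the isometry $J^s$ of Lemma~\ref{lem:J^s-iso}; and for (iii) a countable dense set of polynomials, which is the paper's alternative argument made precise. The only difference is that you prove directly, from order continuity of the separable space $E$ and the Plancherel $L_2$-convergence of the Fejér means, what the paper imports from \cite[Lemmas~6 and~7]{STZ} and \cite[Proposition~3.1]{CXY}.
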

\begin{proof} 
(i) Fix $x \in L_{\infty}(\mathbb{T}^{d}_{\theta})$, $x \neq 0$, and let $x_{k} = F_{k}[x]$ be the square $k$-th Fejér mean of $x,$ that is,
\begin{equation*}
x_{k} = F_{k}[x] = \sum_{\substack{n \in \mathbb{Z}^{d} \\ \max\limits_{1 \leq j \leq d} |n_{j}| \leq k}} \prod_{j=1}^{d} \Big(1 - \frac{|n_{j}|}{k+1}\Big)\,\widehat{x}(n)\,e^{\theta}_{n}, \ \ k \geq 0.
\end{equation*}
By construction, each $x_{k}$ is a finite polynomial, hence $x_{k} \in \mathcal{P}_{\theta}$ for all $k \geq 0$.

By the properties of the Fejér means on the noncommutative torus (see, e.g., \cite[Proposition~3.1]{CXY}), we have
\begin{equation*}
x_{k} \to x \quad \text{in } L_{1}(\mathbb{T}^{d}_{\theta}) \quad \text{as } k \to \infty,
\end{equation*}
and
\begin{equation*}
\|x_{k}\|_{L_{\infty}(\mathbb{T}^{d}_{\theta})}
\leq \|x\|_{L_{\infty}(\mathbb{T}^{d}_{\theta})}, \ \ k \geq 0.
\end{equation*}
Define
\begin{equation*}
y_{k}
= \frac{x_{k} - x}{2\|x\|_{L_{\infty}(\mathbb{T}^{d}_{\theta})}}, \ \ k \geq 0.
\end{equation*}
Then, it is obvious that
\begin{equation*}
\|y_{k}\|_{L_{\infty}(\mathbb{T}^{d}_{\theta})} \leq 1 \ \text{ for all } \ k \geq 0,
\ \
\|y_{k}\|_{L_{1}(\mathbb{T}^{d}_{\theta})} \to 0 \ \text{ as } \ k \to \infty.
\end{equation*}

Since $E(0,1)$ is separable, by \cite[Lemma~6]{STZ} there exists a function $\psi_{\mathcal{E}} : [0,\infty) \to [0,\infty)$ with $\psi_{\mathcal{E}}(+0) = 0$ such that for all $k\geq 0,$ we have
\begin{equation*}
\|y_k\|_{\mathcal{E}(\mathbb{T}^{d}_{\theta})}
\leq \psi_{\mathcal{E}}\big(\|y_k\|_{L_{1}(\mathbb{T}^{d}_{\theta})}\big), \,\ 
\end{equation*}
Consequently, we obtain
\begin{equation*}
0 \leq \|y_{k}\|_{\mathcal{E}(\mathbb{T}^{d}_{\theta})}
\leq \psi_{\mathcal{E}}\big(\|y_{k}\|_{L_{1}(\mathbb{T}^{d}_{\theta})}\big)
\to 0 \ \text{ as } \ k \to \infty.
\end{equation*}
Therefore,
\begin{equation*}
\|x_{k} - x\|_{\mathcal{E}(\mathbb{T}^{d}_{\theta})}
= 2\|x\|_{L_{\infty}(\mathbb{T}^{d}_{\theta})}\,\|y_{k}\|_{\mathcal{E}(\mathbb{T}^{d}_{\theta})}
\to 0, \ k \to \infty.
\end{equation*}
Thus, for every $x \in L_{\infty}(\mathbb{T}^{d}_{\theta})$ there exists a sequence $\{x_{k}\}_{k \geq 0} \subset \mathcal{P}_{\theta}$ with $x_{k} \to x$ in $\mathcal{E}(\mathbb{T}^{d}_{\theta})$.

On the other hand, since $E(0,1)$ is separable, $L_{\infty}(\mathbb{T}^{d}_{\theta})$ is dense in $\mathcal{E}(\mathbb{T}^{d}_{\theta})$ by \cite[Lemma~7]{STZ}. Hence, $\mathcal{P}_{\theta}$ is dense in $\mathcal{E}(\mathbb{T}^{d}_{\theta})$, which proves (ii).

(ii) We first show that $\mathcal{P}_{\theta}$ is dense in $H^{s}_{\mathcal{E}}(\mathbb{T}^{d}_{\theta})$.
By Lemma~\ref{lem:J^s-iso}, the operator
\begin{equation*}
J^{s}   : H^{s}_{\mathcal{E}}(\mathbb{T}^{d}_{\theta})
\to \mathcal{E}(\mathbb{T}^{d}_{\theta})
\end{equation*}
is a surjective linear isometry with inverse $J^{-s}$. Let $x\in\mathcal{P}_{\theta}$ be a polynomial. Then, by the definition of $J^{s}$ in \eqref{J^s}, we obtain
\begin{equation*}
    J^{s}x=\sum_{n\in\mathbb{Z}^{d}}(1+|n|^{2})^{\frac{s}{2}}\alpha_{n}e_{n}^{\theta}.
\end{equation*}
Since the resulting series still consists of finitely many terms, $J^{s}x$ is again a polynomial, and therefore, $J^{s}(\mathcal{P}_{\theta}) \subset \mathcal{P}_{\theta}.$ 

Let $x \in H^{s}_{\mathcal{E}}(\mathbb{T}^{d}_{\theta})$ be arbitrary and set
\begin{equation*}
y = J^{s}   x \in \mathcal{E}(\mathbb{T}^{d}_{\theta}).
\end{equation*}
By (i), there exists a sequence $\{y_{n}\}_{n \geq 1} \subset \mathcal{P}_{\theta}$ such that
\begin{equation*}
\|y_{n} - y\|_{\mathcal{E}(\mathbb{T}^{d}_{\theta})} \to 0 \ \text{ as } \ n \to \infty.
\end{equation*}
Define
\begin{equation*}
x_{n} = J^{-s}  y_{n}, \ \ n \geq 1.
\end{equation*}
Since each $y_{n}$ is a polynomial, we have $x_{n} \in \mathcal{P}_{\theta}$ for all $n\geq 1.$ Furthermore,
\begin{equation*}
\|x_{n} - x\|_{H^{s}_{\mathcal{E}}(\mathbb{T}^{d}_{\theta})}
= \|J^{s}  (x_{n} - x)\|_{\mathcal{E}(\mathbb{T}^{d}_{\theta})}
= \|y_{n} - y\|_{\mathcal{E}(\mathbb{T}^{d}_{\theta})}
\to 0.
\end{equation*}
Thus $\mathcal{P}_{\theta}$ is dense in $H^{s}_{\mathcal{E}}(\mathbb{T}^{d}_{\theta})$.
Since, $ \mathcal{P}_{\theta} \subset C^{\infty}(\mathbb{T}^{d}_{\theta})$ and $\mathcal{P}_{\theta}$ is already dense in $H^{s}_{\mathcal{E}}(\mathbb{T}^{d}_{\theta})$, it follows that $C^{\infty}(\mathbb{T}^{d}_{\theta})$ is dense in $H^{s}_{\mathcal{E}}(\mathbb{T}^{d}_{\theta})$.

(iii) Since $E(0,1)$ is separable, $\mathcal{E}(\mathbb{T}^{d}_{\theta})$ is separable (\cite[Lemma~7]{STZ}) as a Banach space (for more details on separability properties of noncommutative symmetric spaces we refer to \cite[Section 1]{CzerKamin}). By Lemma~\ref{lem:J^s-iso}, the operator
\begin{equation*}
J^{s}   : H^{s}_{\mathcal{E}}(\mathbb{T}^{d}_{\theta})
\to \mathcal{E}(\mathbb{T}^{d}_{\theta})
\end{equation*}
is a surjective linear isometry of Banach spaces. It follows that $H^{s}_{\mathcal{E}}(\mathbb{T}^{d}_{\theta})$ is separable.

Alternatively, by (ii) the polynomial algebra $\mathcal{P}_{\theta}$ is dense in $H^{s}_{\mathcal{E}}(\mathbb{T}^{d}_{\theta})$. Since $\mathcal{P}_{\theta}$ is generated by the countable family $\{e^{\theta}_{n} : n \in \mathbb{Z}^{d}\}$, it contains a countable dense subset.
This also shows that $H^{s}_{\mathcal{E}}(\mathbb{T}^{d}_{\theta})$ is separable.
\end{proof}

\section{Noncommutative fractional Sobolev inequality} 

In this section we investigate the fractional Sobolev type inequality in noncommutative  symmetric spaces. First, let us prove some technical lemmas, which will be used to obtain the main result.

Recall that the convolution \eqref{Convolution} of $f\in L_{1}(\mathbb{T}^{d})$ and $x\in L_{1}(\mathbb{T}^{d}_{\theta})$ is defined by Bochner integral
\begin{equation*}
    f\ast x=\int\limits_{\mathbb{T}^{d}} f(s)\,\alpha_{s}(x)\,ds.
\end{equation*}
\begin{lem}\label{FirstTechLemma} Let $f\in L_{1}(\mathbb{T}^{d})$. If $x\in L_{1}(\mathbb{T}^{d}_{\theta}),$ then we have
\begin{equation}\label{ConvIneqFirst}
    \|f\ast x\|_{L_{1}(\mathbb{T}^{d}_{\theta})}\leq\|f\|_{L_{1}(\mathbb{T}^{d})}\|x\|_{L_{1}(\mathbb{T}^{d}_{\theta})},
\end{equation}
and if $x\in L_{\infty}(\mathbb{T}^{d}_{\theta}),$ then we have 
\begin{equation}\label{ConvIneqSecond}
    \|f\ast x\|_{L_{\infty}(\mathbb{T}^{d}_{\theta})}\leq \|f\|_{L_{1}(\mathbb{T}^{d})}\|x\|_{L_{\infty}(\mathbb{T}^{d}_{\theta})}.
\end{equation}
\end{lem}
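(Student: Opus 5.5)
The plan is to use Minkowski's inequality for Bochner integrals together with the fact that each $\alpha_{s}$ is a trace-preserving $*$-automorphism, hence isometric on every $L_p(\mathbb{T}^{d}_{\theta})$ by \eqref{TransfNorm}.

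For \eqref{ConvIneqFirst}, take $x\in L_{1}(\mathbb{T}^{d}_{\theta})$. First I would check that $s\mapsto f(s)\alpha_{s}(x)$ is Bochner integrable in $L_{1}(\mathbb{T}^{d}_{\theta})$. It is strongly measurable because $s\mapsto\alpha_{s}(x)$ is norm-continuous in $L_{1}(\mathbb{T}^{d}_{\theta})$. This continuity is clear for polynomials, since $\alpha_{s}(e_n^\theta)=s^n e_n^\theta$ and each such map is continuous in $s$. It then extends to all of $L_{1}$ by density of $\mathcal{P}_\theta$ (Theorem \ref{thm:HEs-properties}(i) with $E=L_1$) and the uniform isometry of the $\alpha_{s}$. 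Given integrability, the norm of a Bochner integral is bounded by the integral of the norm, so
\begin{equation*}
\|f\ast x\|_{L_{1}(\mathbb{T}^{d}_{\theta})}\leq\int_{\mathbb{T}^{d}}|f(s)|\,\|\alpha_{s}(x)\|_{L_{1}(\mathbb{T}^{d}_{\theta})}\,ds=\|f\|_{L_{1}(\mathbb{T}^{d})}\|x\|_{L_{1}(\mathbb{T}^{d}_{\theta})},
\end{equation*}
using \eqref{TransfNorm} with $p=1$.

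For \eqref{ConvIneqSecond}, the integral is defined in $L_1$, so the $L_\infty$ bound has to come from duality rather than directly from a Bochner integral in $\mathbb{T}^{d}_{\theta}$, where $s\mapsto \alpha_s(x)$ need not be norm-continuous. I would use the duality $L_\infty(\mathbb{T}^{d}_{\theta})=L_1(\mathbb{T}^{d}_{\theta})^*$ under the pairing $y\mapsto\tau_\theta(\cdot\,y)$. Fix $y\in L_{1}(\mathbb{T}^{d}_{\theta})$. The functional $z\mapsto\tau_\theta(zy)$ is bounded on $L_1$ when restricted to $z\in L_\infty$, and it commutes with the Bochner integral. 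Strictly speaking, the roles should be arranged so that the bounded functional acts on the $L_1$-valued integral: $z\mapsto \tau_\theta(z w)$ with $w\in L_\infty$ is continuous on $L_1$. So I would take $w$ in a norming set for $L_1$, but the cleanest route is to test against $y\in L_1\cap L_\infty$, which is dense in $L_1$. This gives
\begin{equation*}
|\tau_\theta((f\ast x)y)|=\Big|\int_{\mathbb{T}^{d}} f(s)\,\tau_\theta(\alpha_{s}(x)y)\,ds\Big|\leq\int_{\mathbb{T}^{d}}|f(s)|\,\|\alpha_{s}(x)\|_{L_\infty}\|y\|_{L_1}\,ds=\|f\|_{L_1}\|x\|_{L_\infty}\|y\|_{L_1},
\end{equation*}
using Hölder's inequality $|\tau_\theta(ab)|\le\|a\|_\infty\|b\|_1$ and the fact that $\alpha_{s}$ is a $*$-automorphism, so $\|\alpha_s(x)\|_\infty=\|x\|_\infty$. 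Taking the supremum over $y$ with $\|y\|_{L_1}\le1$ then shows $f\ast x\in L_\infty(\mathbb{T}^{d}_{\theta})$ with the claimed norm bound.

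The main obstacle is exactly this justification. It requires checking the commutation of the pairing with the Bochner integral and showing that an $L_1$ element whose pairings with a dense subset of $L_1$ are uniformly bounded is in fact in $L_\infty$. The second point follows since the functional extends to a bounded one on $L_1$, which is represented by some $w\in L_\infty$, and $w=f\ast x$ by faithfulness of $\tau_\theta$. The remaining steps are routine.
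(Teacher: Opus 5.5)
Your proof is correct. The $L_1$ estimate is the paper's argument: the norm of the Bochner integral is at most the integral of the norm, combined with \eqref{TransfNorm}. You additionally check strong measurability via $L_1$-norm continuity of $s\mapsto\alpha_s(x)$, which the paper takes for granted.

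The $L_\infty$ estimate is where the two routes differ. The paper simply writes $\|\int f(s)\alpha_s(x)\,ds\|_{L_\infty}\le\int|f(s)|\,\|\alpha_s(x)\|_{L_\infty}\,ds$, applying the same Minkowski-type inequality directly in the $L_\infty$-norm. You instead argue by duality:
\begin{itemize}
\item you pair $f\ast x$ with $y\in L_\infty(\mathbb{T}^d_\theta)$ through $\tau_\theta$, which is a bounded functional on $L_1$ and so commutes with the $L_1$-valued Bochner integral;
\item you bound the result by $\|f\|_{L_1}\|x\|_{L_\infty}\|y\|_{L_1}$ using H\"older's inequality;
\item you conclude via $L_1(\mathbb{T}^d_\theta)^*=L_\infty(\mathbb{T}^d_\theta)$ and faithfulness of $\tau_\theta$.
\end{itemize}

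The paper's version is shorter, but it is only literally justified if the integral is read as a weak$^*$-integral in $L_\infty$. As you point out, $s\mapsto\alpha_s(x)$ is in general not norm-measurable in $L_\infty$. Already for $\theta=0$ and $x$ an indicator function its range is non-separable, so there is no $L_\infty$-valued Bochner integral to which the triangle inequality could be applied.

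Your argument has two advantages. It uses only the integral as the paper actually defines it, in $L_1$. It also proves, rather than presupposes, that $f\ast x\in L_\infty(\mathbb{T}^d_\theta)$. In effect it supplies the justification of the norm inequality for the weak$^*$-integral.

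One cleanup: your first choice ``fix $y\in L_1$'' does not give a functional that is continuous on $L_1$. Your corrected choice is the right one, and since $\tau_\theta$ is finite, $L_1\cap L_\infty=L_\infty$, so you can simply test against all $y\in L_\infty(\mathbb{T}^d_\theta)$ from the start.
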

\begin{proof} Let us fix $f\in L_{1}(\mathbb{T}^{d})$. By the property \eqref{TransfNorm} of $\alpha_{s},$ if $x\in L_{1}(\mathbb{T}^{d}_{\theta})$, we obtain
    \begin{equation*}\begin{split}
        \|f\ast x\|_{L_{1}(\mathbb{T}^{d}_{\theta})}&=\left\|\int\limits_{\mathbb{T}^{d}}f(s)\alpha_{s}(x)ds\right\|_{L_{1}(\mathbb{T}^{d}_{\theta})}\\
        & \leq  \int\limits_{\mathbb{T}^{d}}|f(s)|\tau_{\theta}(|\alpha_{s}(x)|)ds=\int\limits_{\mathbb{T}^{d}}|f(s)|\|\alpha_{s}(x)\|_{L_{1}(\mathbb{T}^{d}_{\theta})}ds\\
        &\stackrel{\eqref{TransfNorm}}{=}\int\limits_{\mathbb{T}^{d}}|f(s)|\|x\|_{L_{1}(\mathbb{T}^{d}_{\theta})}ds=\|f\|_{L_{1}(\mathbb{T}^{d})}\|x\|_{L_{1}(\mathbb{T}^{d}_{\theta})},
    \end{split}
    \end{equation*}
and similarly, if $x\in L_{\infty}(\mathbb{T}^{d}_{\theta}),$ then we obtain
 \begin{equation*}\begin{split}
        \|f\ast x\|_{L_{\infty}(\mathbb{T}^{d}_{\theta})}&=\left\|\int\limits_{\mathbb{T}^{d}}f(s)\alpha_{s}(x)ds\right\|_{L_{\infty}(\mathbb{T}^{d}_{\theta})}\\
        & \leq  \int\limits_{\mathbb{T}^{d}}|f(s)|\|\alpha_{s}(x)\|_{L_{\infty}(\mathbb{T}^{d}_{\theta})}ds\stackrel{\eqref{TransfNorm}}{=}\int\limits_{\mathbb{T}^{d}}|f(s)|\|x\|_{L_{\infty}(\mathbb{T}^{d}_{\theta})}ds\\
        &=\|f\|_{L_{1}(\mathbb{T}^{d})}\|x\|_{L_{\infty}(\mathbb{T}^{d}_{\theta})}.
    \end{split}
    \end{equation*}
    This completes the proof.
\end{proof}

\begin{lem}\label{first oneal lemma} Let $f\in L_{1}(\mathbb{T}^{d})$ and $x\in L_{1}(\mathbb{T}^{d}_{\theta})$. Then we have
$$f\ast x\prec\prec \|f\|_{L_{1}(\mathbb{T}^{d})}x.$$
\end{lem}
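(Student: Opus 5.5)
The natural route is the Calder\'on--Mityagin-type characterization of submajorization in terms of $(L_1,L_\infty)$-interpolation. Fix $f\in L_1(\mathbb{T}^d)$ and consider the linear map $A_f:x\mapsto f\ast x$. By Lemma \ref{FirstTechLemma}, $A_f$ is bounded on $L_1(\mathbb{T}^d_\theta)$ and on $L_\infty(\mathbb{T}^d_\theta)$, with norms at most $\|f\|_{L_1(\mathbb{T}^d)}$ in both cases. Hence the rescaled operator $\|f\|_{L_1}^{-1}A_f$ (the case $f=0$ being trivial) is a contraction on both endpoint spaces. For such operators on a semifinite (here finite) von Neumann algebra, the standard result (see \cite[Chapter 3]{LSZ} or \cite{DdPS}) gives $\mu(Ax)\prec\prec\mu(x)$ for every $x\in L_1+L_\infty=L_1(\mathbb{T}^d_\theta)$. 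Applying this to $A=\|f\|_{L_1}^{-1}A_f$ and multiplying back by the scalar immediately yields $f\ast x\prec\prec\|f\|_{L_1(\mathbb{T}^d)}x$.

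If one prefers to avoid quoting the interpolation theorem, it can be derived directly via the $K$-functional. First recall the identity
\[
\int_0^t\mu(s,x)\,ds=\inf\{\|y\|_{L_1}+t\|z\|_{L_\infty}: x=y+z\}
\]
(the noncommutative version of the Peetre $K$-functional formula, valid on $(\mathbb{T}^d_\theta,\tau_\theta)$). Next, for any decomposition $x=y+z$, linearity gives $f\ast x=f\ast y+f\ast z$. Lemma \ref{FirstTechLemma} then bounds $\|f\ast y\|_{L_1}+t\|f\ast z\|_{L_\infty}$ by $\|f\|_{L_1}(\|y\|_{L_1}+t\|z\|_{L_\infty})$. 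Taking the infimum over decompositions gives $\int_0^t\mu(s,f\ast x)\,ds\le\|f\|_{L_1}\int_0^t\mu(s,x)\,ds$ for all $t>0$, which is exactly the claim.

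The only point needing care is that $f\ast z$ is well defined for $z\in L_\infty$ and $f\ast y$ for $y\in L_1$, consistently with $f\ast x$. This holds since $L_\infty\subset L_1$ on the finite algebra and the convolution is linear in the $L_1$ Bochner sense. The main (mild) obstacle is justifying the $K$-functional formula, or equivalently the interpolation characterization, in the noncommutative setting. I would simply cite it from \cite{DdPS} or \cite{LSZ} rather than reprove it.
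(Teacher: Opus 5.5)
Your proposal is correct and follows the paper's proof. The paper also normalizes $\|f\|_{L_1(\mathbb{T}^d)}=1$, uses Lemma~\ref{FirstTechLemma} to see that $x\mapsto f\ast x$ is a contraction on both $L_1(\mathbb{T}^d_\theta)$ and $L_\infty(\mathbb{T}^d_\theta)$, and concludes by the noncommutative Calder\'on-type interpolation theorem \cite[Theorem 7.12.6]{DdPS}; your $K$-functional derivation is a correct unpacking of that cited result.
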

\begin{proof} Fix $f\in L_{1}(\mathbb{T}^{d})$ and without loss of generality, assume that $\|f\|_{L_{1}(\mathbb{T}^{d})}=1.$ It follows from Lemma \ref{FirstTechLemma} that, the mapping $x\mapsto f\ast x$ is a contraction on both $L_1(\mathbb{T}^{d}_{\theta})$ and $L_{\infty}(\mathbb{T}^{d}_{\theta}).$ Hence, the assertion follows from the interpolation \cite[Theorem 7.12.6]{DdPS}.
\end{proof}
\begin{lem}\label{questionable oneal lemma} If $f\in L_{\infty}(\mathbb{T}^{d})$ and if $x\in L_{1}(\mathbb{T}^{d}_{\theta}),$ then 
\begin{equation}\label{Eq:questionable oneal lemma}
   \|f\ast x\|_{L_{\infty}(\mathbb{T}^{d}_{\theta})}\leq 16 \|f\|_{L_{\infty}(\mathbb{T}^{d})}\|x\|_{L_{1}(\mathbb{T}^{d}_{\theta})}. 
\end{equation}
\end{lem}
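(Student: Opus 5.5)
The constant $16=4\times4$ suggests writing both $x$ and a test element $y$ as combinations of four positive parts. So the plan is to argue by duality and reduce to positive elements, where everything can be computed exactly. Since $L_{\infty}(\mathbb{T}^{d}_{\theta})=(L_{1}(\mathbb{T}^{d}_{\theta}))^{*}$ and $L_{\infty}(\mathbb{T}^{d}_{\theta})$ is dense in $L_{1}(\mathbb{T}^{d}_{\theta})$ (the trace is finite), we have
\begin{equation*}
\|f\ast x\|_{L_{\infty}(\mathbb{T}^{d}_{\theta})}=\sup\Big\{|\tau_{\theta}((f\ast x)y)|:\ y\in L_{\infty}(\mathbb{T}^{d}_{\theta}),\ \|y\|_{L_{1}(\mathbb{T}^{d}_{\theta})}\leq1\Big\}.
\end{equation*}
Here $f\ast x$ is a priori only in $L_{1}(\mathbb{T}^{d}_{\theta})$. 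I would first treat $x\in L_{\infty}(\mathbb{T}^{d}_{\theta})$ and pass to general $x$ by density, using the $L_{1}$-continuity from Lemma~\ref{FirstTechLemma} and lower semicontinuity of the $L_{\infty}$-norm under $L_{1}$-convergence. Alternatively, one can note that the supremum above being finite already forces $f\ast x\in L_{\infty}(\mathbb{T}^{d}_{\theta})$.

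For fixed $y\in L_{\infty}(\mathbb{T}^{d}_{\theta})$, the functional $z\mapsto\tau_{\theta}(zy)$ is continuous on $L_{1}(\mathbb{T}^{d}_{\theta})$. Hence it commutes with the Bochner integral \eqref{Convolution}, and
\begin{equation*}
\tau_{\theta}((f\ast x)y)=\int\limits_{\mathbb{T}^{d}}f(s)\,\tau_{\theta}(\alpha_{s}(x)y)\,ds,
\qquad
|\tau_{\theta}((f\ast x)y)|\leq\|f\|_{L_{\infty}(\mathbb{T}^{d})}\int\limits_{\mathbb{T}^{d}}|\tau_{\theta}(\alpha_{s}(x)y)|\,ds.
\end{equation*}
So it suffices to prove the bilinear estimate
\begin{equation*}
\int\limits_{\mathbb{T}^{d}}|\tau_{\theta}(\alpha_{s}(x)y)|\,ds\leq16\|x\|_{L_{1}(\mathbb{T}^{d}_{\theta})}\|y\|_{L_{1}(\mathbb{T}^{d}_{\theta})}.
\end{equation*}

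This estimate is the main point. It cannot be handled by putting absolute values inside the trace, so I would exploit positivity. Write $x=\sum_{k=0}^{3}i^{k}x_{k}$ with $x_{k}\geq0$ and $\|x_{k}\|_{1}\leq\|x\|_{1}$. This comes from splitting $x$ into its real and imaginary parts (each of $L_{1}$-norm at most $\|x\|_{1}$) and then taking positive and negative parts. Decompose $y=\sum_{l=0}^{3}i^{l}y_{l}$ in the same way. By bilinearity and the triangle inequality, it is enough to show, for positive $x,y$,
\begin{equation*}
\int_{\mathbb{T}^{d}}|\tau_{\theta}(\alpha_{s}(x)y)|\,ds\leq\tau_{\theta}(x)\tau_{\theta}(y).
\end{equation*}
For $x,y\geq0$, each $\alpha_{s}(x)$ is positive because $\alpha_{s}$ is a $*$-automorphism. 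Then $\tau_{\theta}(\alpha_{s}(x)y)=\tau_{\theta}(y^{1/2}\alpha_{s}(x)y^{1/2})\geq0$, so the absolute value can be dropped. Interchanging trace and integral once more gives
\begin{equation*}
\int_{\mathbb{T}^{d}}\tau_{\theta}(\alpha_{s}(x)y)\,ds=\tau_{\theta}\Big(\Big(\int_{\mathbb{T}^{d}}\alpha_{s}(x)\,ds\Big)y\Big).
\end{equation*}
The key identity is $\int_{\mathbb{T}^{d}}\alpha_{s}(x)\,ds=\tau_{\theta}(x)\mathbf{1}$, which is $\mathbf{1}\ast x$ in the notation of \eqref{ConvMult}. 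It holds because $\widehat{\mathbf{1}}(n)=\delta_{n,0}$, so the multiplier $T_{\widehat{\mathbf{1}}}$ kills every $e^{\theta}_{n}$ with $n\neq0$ and keeps $\widehat{x}(0)=\tau_{\theta}(x)$. Substituting, the integral equals $\tau_{\theta}(x)\tau_{\theta}(y)=\|x\|_{1}\|y\|_{1}$ for positive $x,y$. Summing the $16$ terms of the decomposition then yields the constant $16$.

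The only delicate points are the justifications of the interchanges of $\tau_{\theta}$ with the Bochner integral, which are fine for $y$ bounded, and the final density argument. I do not expect either to be a real obstacle.
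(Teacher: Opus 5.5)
Your proof is correct, but it is organized differently from the paper's. The paper does not dualize. It splits $f=f_1-f_2+if_3-if_4$ into nonnegative functions with $\|f_k\|_{L_\infty(\mathbb{T}^d)}\le\|f\|_{L_\infty(\mathbb{T}^d)}$ and $x$ into four positive operators. It then uses that convolution preserves positivity (applied to $\|f_k\|_{L_\infty(\mathbb{T}^d)}-f_k\ge 0$) to obtain the operator inequality $0\le f_k\ast x_l\le \|f_k\|_{L_\infty(\mathbb{T}^d)}\,\tau_\theta(x_l)\mathbf{1}$, and reads off the $L_\infty$ bound on each of the $16$ pieces directly.

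You instead test $f\ast x$ against $y\in L_\infty(\mathbb{T}^d_\theta)$ with $\|y\|_{L_1(\mathbb{T}^d_\theta)}\le 1$. You use $f$ only through $|f(s)|\le\|f\|_{L_\infty(\mathbb{T}^d)}$, so $f$ is never decomposed, and you put the four-fold splitting on $x$ and on the test element $y$. The scalar positivity $\tau_\theta(\alpha_s(x_k)y_l)\ge 0$ replaces the operator inequality. The averaging identity $\mathbf{1}\ast x=\tau_\theta(x)\mathbf{1}$ drives both arguments.

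The paper's version is shorter, and boundedness of $f\ast x$ comes for free from order domination. Yours costs the duality $L_1^*=L_\infty$ and an extra step showing $f\ast x\in L_\infty(\mathbb{T}^d_\theta)$, both of which you handle correctly. In exchange, it isolates an $f$-independent estimate $\int_{\mathbb{T}^d}|\tau_\theta(\alpha_s(x)y)|\,ds\lesssim\|x\|_{L_1}\|y\|_{L_1}$.

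That formulation also shows the constant $16$ is not sharp. Factor $x=(u|x|^{1/2})\,|x|^{1/2}$ and $y=(v|y|^{1/2})\,|y|^{1/2}$ via polar decompositions. Apply Cauchy--Schwarz in $L_2(\mathbb{T}^d_\theta)$, then in $s$, and use the averaging identity on $|x|$ and $|x^*|$. This gives the estimate with constant $1$, hence $\|f\ast x\|_{L_\infty}\le\|f\|_{L_\infty}\|x\|_{L_1}$.
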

\begin{proof} Let $f\in L_{\infty}(\mathbb{T}^{d})$, it is known that we can write $f$ as $f=f_1-f_2+if_3-if_4,$ where $f_k\geq0, k=1,2,3,4,$ and holds
\begin{equation}\label{funcDecom}
  \|f_k\|_{L_{\infty}(\mathbb{T}^{d})}\leq  \|f\|_{L_{\infty}(\mathbb{T}^{d})}, \,\ f\in L_{\infty}(\mathbb{T}^{d}). 
\end{equation}
Now, note that, every operator $x\in S(\mathbb{T}^{d}_{\theta})$ can be decomposed \cite[Chapter II, pp. 60-61]{DdPS} in the following way
\begin{equation*}
    x=\mathfrak{R}(x)+i\mathfrak{I}(x),
\end{equation*}
where $\mathfrak{R}(x)$ and $\mathfrak{I}(x)$ are self-adjoint operators and are determined by the following formulas
\begin{equation*}
    \mathfrak{R}(x)=\frac{x+x^{*}}{2},  \ \ \mathfrak{I}(x)=\frac{x-x^{*}}{2i}.
\end{equation*}
Moreover, every self-adjoint operator $x=x^{*}$ also has a decomposition as a sum of positive operators
\begin{equation*}
    x=x_{+}+x_{-},
\end{equation*}
where $x_{+}=\frac{x+|x|}{2},$ and $x_{-}=\frac{x-|x|}{2},$ $(|x|^{2}:=x^{*}x)$. Therefore, any $x\in S(\mathbb{T}^{d}_{\theta})$ can be expressed as a linear combination of four positive operators (see, \cite[Chapter I, p.33]{LSZ})
$$
    x=x_{1}-x_{2}+ix_{3}-ix_{4}, \,\ x_{l}\in S(\mathbb{T}^{d}_{\theta}).
$$
Let $x\in L_{1}(\mathbb{T}^{d}_{\theta})$ and $x=x_{1}-x_{2}+ix_{3}-ix_{4}.$ Then these positive components satisfy 
\begin{equation}\label{operDecom}\|x_{l}\|_{L_{1}(\mathbb{T}^{d}_{\theta})}\leq\|x\|_{L_{1}(\mathbb{T}^{d}_{\theta})},\,\ l=1,2,3,4.
\end{equation}
 By the triangle inequality, we have
\begin{equation*}
  \|f\ast x\|_{L_{\infty}(\mathbb{T}^{d}_{\theta})}\leq \sum_{k,l=1}^4\|f_k\ast x_l\|_{L_{\infty}(\mathbb{T}^{d}_{\theta})}.  
\end{equation*}
By the definition of convolution, if $f\geq0$ and $x\geq0,$ then $f\ast x\geq0,$
consequently, we have 
\begin{equation*}
  0\leq f_k\ast x_l\leq \|f_k\|_{L_{\infty}(\mathbb{T}^{d})}\cdot (1\ast x_l), \ \  k,l=1,2,3,4,  
\end{equation*}
where $1\ast x_{l}=\int\limits_{\mathbb{T}^{d}}\alpha_{s}(x_{l})ds=\tau_{\theta}(x_{l})$ (see, \cite[p. 1237]{McDSX}).  Hence,
\begin{equation*}
  \|f_k\ast x_l\|_{L_{\infty}(\mathbb{T}^{d}_{\theta})}\leq \|f_k\|_{L_{\infty}(\mathbb{T}^{d})}\|x_l\|_{L_{1}(\mathbb{T}^{d}_{\theta})}\leq\|f\|_{L_{\infty}(\mathbb{T}^{d})}\|x\|_{L_{1}(\mathbb{T}^{d}_{\theta})},\ \  k,l=1,2,3,4.  
\end{equation*}
Applying formulas \eqref{funcDecom} and \eqref{operDecom}, we get  
\begin{equation*}
   \|f\ast x\|_{L_{\infty}(\mathbb{T}^{d}_{\theta})}\leq 16 \|f\|_{L_{\infty}(\mathbb{T}^{d})}\|x\|_{L_{1}(\mathbb{T}^{d}_{\theta})}, \,\ x\in L_{1}(\mathbb{T}_{\theta}^{d}). 
\end{equation*}
This concludes the proof.
\end{proof}

\begin{cond}\label{main condition general} Let $\mathcal{M}_{1}, \mathcal{M}_{2}$ and $\mathcal{M}_{3}$ are finite von Neumann algebras equipped, respectively, with normal finite faithful traces $\tau_{1}, \tau_{2}$ and $\tau_{3}$. Consider bilinear mapping $\mathfrak{B}:L_{1}(\mathcal{M}_{1})\times L_{1}(\mathcal{M}_{2})\to L_{1}(\mathcal{M}_{3})$ such that
\begin{equation}\label{Gassump-1}
\|\mathfrak{B}(x,y)\|_{L_{1}(\mathcal{M}_{3})}\leq\|x\|_{L_{1}(\mathcal{M}_{1})}\|y\|_{L_{1}(\mathcal{M}_{2})},
\end{equation}
\begin{equation}\label{Gassump-2}
\|\mathfrak{B}(x,y)\|_{L_{\infty}(\mathcal{M}_{3})}\leq\|x\|_{L_{1}(\mathcal{M}_{1})}\|y\|_{L_{\infty}(\mathcal{M}_{2})},
\end{equation}
\begin{equation}\label{Gassump-3}
\|\mathfrak{B}(x,y)\|_{L_{\infty}(\mathcal{M}_{3})}\leq\|x\|_{L_{\infty}(\mathcal{M}_{1})}\|y\|_{L_{1}(\mathcal{M}_{2})}.
\end{equation}
\end{cond}

The following result is a noncommutative version of the famous O'Neil inequality in \cite[Lemma 1.4]{ONeil}.
\begin{lem} \label{final B-lemma} Suppose $\mathfrak{B}$ satisfies Condition \ref{main condition general}. For every $x\in L_1(\mathcal{M}_1,\tau_1)$ and $y\in \mathcal{M}_2$ (or $x\in\mathcal{M}_2$ and $y\in L_1(\mathcal{M}_2,\tau_2)$), we have
    \begin{equation*}
          \int\limits_0^t\mu(s,\mathfrak{B}(x,y))ds\leq \int\limits_0^t\mu(s,x)ds\cdot\int\limits_0^t\mu(s,y)ds+t\int\limits_t^1\mu(s,x)\mu(s,y)ds,\quad t\in(0,1). 
    \end{equation*}
\end{lem}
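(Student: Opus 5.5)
The plan is to reduce to the Peetre $K$-functional for the couple $(L_1,L_\infty)$ and to decompose both arguments by the layer-cake formula; this is O'Neil's original argument transferred to the bilinear, noncommutative setting. I will use the standard noncommutative identity
\[
\int_0^t\mu(s,z)\,ds=K(t,z;L_1(\mathcal{M}_3),L_\infty(\mathcal{M}_3))=\inf\bigl\{\|z_0\|_{L_1}+t\|z_1\|_{L_\infty}: z=z_0+z_1\bigr\},
\]
valid for $z\in L_1(\mathcal{M}_3)$. Its only consequence needed here is that $z\mapsto\int_0^t\mu(s,z)\,ds$ is subadditive and lower semicontinuous, and that a single element satisfies $K(t,z)\le\min\{\|z\|_{L_1},\,t\|z\|_{L_\infty}\}$.

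\emph{Decomposition.} Write the polar decompositions $x=u|x|$ and $y=v|y|$, and set $e_\lambda=e_{(\lambda,\infty)}(|x|)$ and $f_\kappa=e_{(\kappa,\infty)}(|y|)$. The layer-cake formula gives
\[
x=\int_0^\infty u e_\lambda\,d\lambda,\qquad y=\int_0^\infty v f_\kappa\,d\kappa,
\]
with convergence in $L_1$ (and in $L_\infty$ for the bounded argument). Let $a(\lambda)=\tau_1(e_\lambda)$ and $b(\kappa)=\tau_2(f_\kappa)$ be the distribution functions. By bilinearity and continuity of $\mathfrak{B}$ (from \eqref{Gassump-1}),
\[
\mathfrak{B}(x,y)=\int_0^\infty\!\!\int_0^\infty\mathfrak{B}(ue_\lambda,vf_\kappa)\,d\lambda\,d\kappa.
\]
For each piece, \eqref{Gassump-1} gives $\|\mathfrak{B}(ue_\lambda,vf_\kappa)\|_{L_1}\le a(\lambda)b(\kappa)$, since $\|ue_\lambda\|_{L_1}\le\tau_1(e_\lambda)$ and $\|ue_\lambda\|_{L_\infty}\le1$. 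Likewise \eqref{Gassump-2} and \eqref{Gassump-3} together give $\|\mathfrak{B}(ue_\lambda,vf_\kappa)\|_{L_\infty}\le\min\{a(\lambda),b(\kappa)\}$.

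\emph{Integration.} Subadditivity and lower semicontinuity of $K(t,\cdot)$ pass through the Bochner integral. This yields
\[
\int_0^t\mu(s,\mathfrak{B}(x,y))\,ds\le\int_0^\infty\!\!\int_0^\infty\min\{ab,\;t\min(a,b)\}\,d\lambda\,d\kappa .
\]
Next comes the elementary pointwise inequality
\[
\min\{ab,\,t\min(a,b)\}\le(a\wedge t)(b\wedge t)+t\,(\min(a,b)-t)_+ ,
\]
checked in three cases: both $a,b\le t$; one of them $\le t$; and both $\ge t$. Integrating the first term factorizes, and $\int_0^\infty\min(a(\lambda),t)\,d\lambda=\int_0^t\mu(s,x)\,ds$ gives the product $\int_0^t\mu(s,x)ds\cdot\int_0^t\mu(s,y)ds$. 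For the second term, write $(\min(a,b)-t)_+=\int_t^\infty\chi_{\{a>r\}}\chi_{\{b>r\}}\,dr$ and use Fubini. Since $|\{\lambda:a(\lambda)>r\}|=\mu(r,x)$, this gives $t\int_t^1\mu(r,x)\mu(r,y)\,dr$; the integral stops at $1$ because $a,b\le1$ when the traces are normalized. Together these give the claimed bound.

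\emph{Main obstacle.} I expect the main difficulty to be justifying the interchange of the $K$-functional with the double Bochner integral. The $L_\infty$-valued layer integral need not converge in norm when the argument is only in $L_1$, so the step requires care. I would first handle $x,y$ with finite spectrum, where the layer integrals are finite sums and subadditivity applies directly. I would then pass to general $x$ (or $y$) by approximating with finite-spectrum operators converging in $L_1$ from below in $\mu$. Finally I would use continuity of $\mathfrak{B}$ in $L_1$ and the $L_1$-continuity of $z\mapsto\int_0^t\mu(s,z)\,ds$, together with monotone convergence on the right-hand side.
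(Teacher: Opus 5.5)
Your proof is correct, and it takes a genuinely different route from the paper. The paper never redoes O'Neil's computation. It chooses trace-preserving embeddings $i_1,i_2$ of $L_\infty(0,1)$ with $i_1(\mu(x))=|x|$ and $i_2(\mu(y))=|y|$, together with an atomless abelian subalgebra $\mathcal{A}_3\cong L_\infty(0,1)$ of $\mathcal{M}_3$ to which $|\mathfrak{B}(x,y)|$ is affiliated. It then compresses $\mathfrak{B}$ to the commutative bilinear map $\mathfrak{B}_0(f,g)=i_3\big(\mathbb{E}_{\mathcal{A}_3}(u^*\mathfrak{B}(u_1i_1(f),u_2i_2(g)))\big)$. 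This map inherits the three bounds of Condition~\ref{main condition general}, so the classical O'Neil lemma applies, and $\mathfrak{B}_0(\mu(x),\mu(y))=i_3(|\mathfrak{B}(x,y)|)$ transfers the conclusion back.

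You instead run O'Neil's argument directly inside $\mathcal{M}_3$, using only the $K$-functional formula and the spectral layer-cake decomposition of $|x|$ and $|y|$. Your pointwise inequality is in fact an identity. Both integrations are right: the factorized term via $\int_0^\infty\min(a(\lambda),t)\,d\lambda=\int_0^t\mu(s,x)\,ds$, and the tail term via $|\{\lambda:a(\lambda)>r\}|=\mu(r,x)$.

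The paper's route is shorter because the commutative lemma is a black box. However, it leans on structural input: embeddings realizing $\mu(x)$ as $|x|$, atomless abelian subalgebras, conditional expectations, and, strictly, an extension of $\mathfrak{B}_0$ beyond $L_\infty\times L_\infty$ in order to evaluate it at $\mu(x)$. Yours is self-contained and gives the inequality for all $x,y\in L_1$, with no boundedness assumption on either factor.

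The step you flag as the main obstacle is not one. You never need an $L_\infty$-valued integral, since the $L_\infty$ bounds are applied piece by piece. Moreover, $K(t,\cdot)$ is a seminorm on $L_1(\mathcal{M}_3)$ dominated by $\|\cdot\|_{L_1(\mathcal{M}_3)}$. Hence $K\bigl(t,\int F\bigr)\le\int K(t,F)$ holds for every $L_1$-valued Bochner integral, exactly as for a norm. The only thing to check is strong measurability of $\lambda\mapsto ue_\lambda$ in $L_1$. This follows from $\|e_\lambda-e_{\lambda'}\|_{L_1}=|a(\lambda)-a(\lambda')|$ and right-continuity of $a$. So the finite-spectrum approximation is unnecessary, though harmless.

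Your parenthetical claim that the layer integral also converges in $L_\infty$ for a bounded argument is false in the Bochner sense, since $\|e_\lambda-e_{\lambda'}\|_{L_\infty}=1$ whenever $e_\lambda\neq e_{\lambda'}$. Nothing in your argument uses it.
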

\begin{proof} Fix $x\in L_1(\mathcal{M}_1,\tau_1)$ and $y\in \mathcal{M}_2$ (or $x\in\mathcal{M}_2$ and $y\in L_1(\mathcal{M}_2,\tau_2)$). Fix some trace preserving $\ast$-homomorphisms $i_1:L_{\infty}(0,1)\to \mathcal{M}_1$ and $i_2:L_{\infty}(0,1)\to \mathcal{M}_2 $ such that $i_1(\mu(x))=|x|$ and $i_2(\mu(y))=|y|.$ Now, let $x=u_1|x|,$ $y=u_2|y|$ and $\mathfrak{B}(x,y)=u\cdot |\mathfrak{B}(x,y)|$ be the polar decompositions of $x,$ $y$ and $\mathfrak{B}(x,y),$ respectively. Let $\mathcal{A}_{3}\subset \mathcal{M}_3$ be an atomless abelian von Neumann subalgebra such that $|\mathfrak{B}(x,y)|$ is affiliated to $\mathcal{A}_{3}.$ Since $\mathcal{A}_{3}$ is an atomless abelian von Neumann subalgebra of $\mathcal{M}_{3},$ it follows that $\mathcal{A}_{3}$ is isomorphic to $L_{\infty}(0,1)$ (see, \cite[Chapter 4, Lemma 4.4.12, p. 282]{DdPS}). Fix a trace preserving $\ast$-isomorphism $i_3:\mathcal{A}_3\to L_{\infty}(0,1).$

Define the map $\mathfrak{B}_0:L_{\infty}(0,1)\times L_{\infty}(0,1)\to L_{\infty}(0,1)$ by setting
\begin{equation*}
\mathfrak{B}_0(f,g)=i_{3}\Big(\mathbb{E}_{\mathcal{A}_{3}}(u^{\ast}\cdot \mathfrak{B}(u_1\cdot i_1(f), u_2\cdot i_2(g)))\Big), \ \ f,g\in L_{\infty}(0,1),
\end{equation*}
where $\mathbb{E}_{\mathcal{A}_{3}}:\mathcal{M}_{3}\to \mathcal{A}_{3}$ is a conditional expectation \cite[Chapter 7, p. 456]{DdPS}. Since $\mathfrak{B}$ is bilinear and $\mathbb{E}_{\mathcal{A}_{3}}$ is linear, it follows that $\mathfrak{B}_{0}$ is bilinear.  We have
\begin{equation*}
\begin{split}
\|\mathfrak{B}_0(f,g)\|_{L_{\infty}(0,1)}&=\|\mathbb{E}_{\mathcal{A}_{3}}(u^{\ast}\cdot \mathfrak{B}(u_1\cdot i_1(f), u_2\cdot i_2(g))) \|_{L_{\infty}(\mathcal{A}_3,\tau_3)}\\
&\leq \|u^{\ast}\cdot \mathfrak{B}(u_1\cdot i_1(f), u_2\cdot i_2(g))\|_{L_{\infty}(\mathcal{M}_{3},\tau_3)} \\
   &\leq \|\mathfrak{B}(u_1\cdot i_1(f), u_2\cdot i_2(g))\|_{L_{\infty}(\mathcal{M}_{3},\tau_3)}. 
\end{split}
\end{equation*}

Hence, by \eqref{Gassump-2}, we obtain 
\begin{equation*}
\begin{split}
\| \mathfrak{B}_0(f,g)\|_{L_{\infty}(0,1)}
   &\leq \|\mathfrak{B}(u_1\cdot i_1(f), u_2\cdot i_2(g))\|_{L_{\infty}(\mathcal{M}_{3},\tau_3)}\\
   &\stackrel{\eqref{Gassump-2}}{\leq} \|u_{1}\cdot i_{1}(f)\|_{L_{1}(\mathcal{M}_{1},\tau_1)} \|u_{2}\cdot i_{2}(g)\|_{L_{\infty}(\mathcal{M}_{2},\tau_2)}\\
   &\leq\|i_{1}(f)\|_{L_{1}(\mathcal{M}_{1},\tau_1)} \| i_{2}(g)\|_{L_{\infty}(\mathcal{M}_{2},\tau_2)}\\
   &=\|f\|_{L_{1}(0,1)} \| g\|_{L_{\infty}(0,1)}.
\end{split}
\end{equation*}
Similarly, by \eqref{Gassump-3}, we have
\begin{equation*}
\begin{split}
   \| \mathfrak{B}_0(f,g)\|_{L_{\infty}(0,1)}
   \leq \|f\|_{L_{\infty}(0,1)} \| g\|_{L_{1}(0,1)}.
\end{split}
\end{equation*}
And, finally, since $\mathbb{E}_{\mathcal{A}_{3}}$ can be extended to a contraction from $L_{1}(\mathcal{M}_{3},\tau_3)$ to $L_{1}(\mathcal{A}_{3},\tau_3)$ (\cite[Chapter 7, Proposition 7.1.1, p. 457]{DdPS}), it follows from \eqref{Gassump-1} that
\begin{equation*}
\begin{split}
\|\mathfrak{B}_0(f,g)\|_{L_{1}(0,1)}&=\| \mathbb{E}_{\mathcal{A}_{3}}(u^{\ast}\cdot \mathfrak{B}(u_1\cdot i_1(f), u_2\cdot i_2(g))) \|_{L_{1}(\mathcal{A}_3,\tau_3)}\\
&\leq\|u^{\ast}\cdot \mathfrak{B}(u_1\cdot i_1(f), u_2\cdot i_2(g))\|_{L_{1}(\mathcal{M}_{3},\tau_3)} \\
&\leq\|\mathfrak{B}(u_1\cdot i_1(f), u_2\cdot i_2(g))\|_{L_{1}(\mathcal{M}_{3},\tau_3)}\\
& \stackrel{\eqref{Gassump-1}}{\leq} \|u_{1}\cdot i_{1}(f)\|_{L_{1}(\mathcal{M}_{1},\tau_1)}\| u_{2}\cdot i_{2}(g)\|_{L_{1}(\mathcal{M}_{2},\tau_2)}\\
&=\|f\|_{L_{1}(0,1)}\|g\|_{L_{1}(0,1)}.
\end{split}
\end{equation*}
Therefore, $\mathfrak{B}_0$ fits into the classical O'Neil setting \cite[Lemma 1.5]{ONeil}, and for any $f,g\in L_{\infty}(0,1)$, we have
\begin{equation}\label{oneil inequality}
  \int\limits_0^t\mu(s,\mathfrak{B}_0(f,g))ds\leq \int\limits_0^t\mu(s,f)ds\cdot\int\limits_0^t\mu(s,g)ds+t\int\limits_t^1\mu(s,f)\mu(s,g)ds,\quad t\in(0,1).  
\end{equation}
Recall that
$$u_1\cdot i_1(\mu(x))=x,\ \ u_2\cdot i_2(\mu(y))=y,\ \ u^{\ast}\cdot\mathfrak{B}(x,y)=|\mathfrak{B}(x,y)|.$$
Thus,
\begin{equation*}
  \begin{split}
\mathfrak{B}_0(\mu(x),\mu(y))&=i_{3}\Big(\mathbb{E}_{\mathcal{A}_{3}}(u^{\ast}\cdot \mathfrak{B}(u_1\cdot i_1(\mu(x)), u_2\cdot i_2(\mu(y))))\Big)\\
&=i_{3}\Big(\mathbb{E}_{\mathcal{A}_{3}}(u^{\ast}\cdot \mathfrak{B}(x,y))\Big)=i_{3}\Big(\mathbb{E}_{\mathcal{A}_{3}}(|\mathfrak{B}(x,y)|)\Big).
  \end{split}  
\end{equation*}
Since $|\mathfrak{B}(x,y)|$ is affiliated to $\mathcal{A}_3,$ it follows that
\begin{equation*}
  \mathfrak{B}_0(\mu(x),\mu(y))=i_{3}\Big(|\mathfrak{B}(x,y)|\Big).  
\end{equation*}
In particular,
\begin{equation*}
\mu\big(\mathfrak{B}_0(\mu(x),\mu(y))\big)=\mu\big(\mathfrak{B}(x,y)\big).  
\end{equation*}
Using \eqref{oneil inequality} with $f=\mu(x)$ and $g=\mu(y),$ we complete the proof.
\end{proof}

The following is the O'Neil inequality (see \cite[Lemma 1.4]{ONeil}) for the convolution operator defined by \eqref{Convolution}.
\begin{cor}\label{convolution-cor}
For the convolution bilinear mapping $\mathfrak{B}:(f,x)\to f\ast x$ on $L_1(\mathbb{T}^{d})\times L_1(\mathbb{T}^{d}_\theta)$, we have
\begin{equation*}
\int\limits_{0}^{t}\mu(s,f\ast x)ds\leq 16\Big( \int\limits_{0}^{t}\mu(s,f)ds\cdot\int\limits_{0}^{t}\mu(s,x)ds+t\int\limits_{t}^{1}\mu(s,f)\mu(s,x)ds\Big), \ t\in(0,1).
\end{equation*}
\end{cor}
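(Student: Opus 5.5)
The plan is to apply Lemma \ref{final B-lemma} to a rescaled convolution map. Take $\mathcal{M}_1=L_\infty(\mathbb{T}^d)$ with Lebesgue measure (a finite commutative von Neumann algebra with trace of total mass $1$), $\mathcal{M}_2=\mathcal{M}_3=\mathbb{T}^d_\theta$ with $\tau_\theta$. Define $\mathfrak{B}(f,x)=\frac{1}{16}\, f\ast x$. It then suffices to check the three inequalities of Condition \ref{main condition general} for $\mathfrak{B}$; Lemma \ref{final B-lemma} gives the estimate for $\mu(\frac{1}{16}f\ast x)=\frac1{16}\mu(f\ast x)$, and multiplying by $16$ yields the corollary.

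The three conditions follow from lemmas already proved.
\begin{itemize}
\item \eqref{Gassump-1}: by \eqref{ConvIneqFirst} in Lemma \ref{FirstTechLemma}, $\|f\ast x\|_{L_1}\le \|f\|_{L_1}\|x\|_{L_1}$. Dividing by $16$ only helps.
\item \eqref{Gassump-2}: by \eqref{ConvIneqSecond}, $\|f\ast x\|_{L_\infty}\le\|f\|_{L_1}\|x\|_{L_\infty}$.
\item \eqref{Gassump-3}: by Lemma \ref{questionable oneal lemma}, $\|f\ast x\|_{L_\infty}\le 16\|f\|_{L_\infty}\|x\|_{L_1}$. This is exactly where the constant $16$ is needed; after rescaling the inequality holds with constant $1$.
\end{itemize}
The convolution is bilinear and maps $L_1(\mathbb{T}^d)\times L_1(\mathbb{T}^d_\theta)$ into $L_1(\mathbb{T}^d_\theta)$ by \eqref{ConvIneqFirst}, so the Condition is fully satisfied.

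The main obstacle is that Lemma \ref{final B-lemma} is stated only when one argument is bounded, namely $f\in L_1$, $x\in L_\infty$ or vice versa, whereas the corollary concerns $f\in L_1(\mathbb{T}^d)$, $x\in L_1(\mathbb{T}^d_\theta)$. I would handle this by approximation. First apply the lemma to truncations $f_n\in L_\infty(\mathbb{T}^d)$, for example $f_n=f\chi_{\{|f|\le n\}}$, which satisfy $\mu(f_n)\le\mu(f)$ and $f_n\to f$ in $L_1$. Then $f_n\ast x\to f\ast x$ in $L_1(\mathbb{T}^d_\theta)$ by \eqref{ConvIneqFirst}. The left side is continuous in $L_1$: the map $z\mapsto\int_0^t\mu(s,z)\,ds$ is a norm dominated by $\|z\|_{L_1}$. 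On the right, monotonicity gives the bound with $\mu(f)$ in place of $\mu(f_n)$. Passing to the limit gives the inequality for general $f\in L_1$. If the right side is infinite there is nothing to prove.

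One point to double-check is that the proof of Lemma \ref{final B-lemma} really used only the three norm bounds together with the trace-preserving embeddings into the finite algebras. Here both total traces equal $1$, matching the interval $(0,1)$, so this should be fine.
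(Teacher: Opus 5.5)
Your proposal is correct and is the paper's argument. The paper also takes $\mathcal{M}_1=L_\infty(\mathbb{T}^d)$, $\mathcal{M}_2=\mathcal{M}_3=\mathbb{T}^d_\theta$ and applies Lemma~\ref{final B-lemma} to $(f,x)\mapsto\frac1{16}f\ast x$, with the three bounds of Condition~\ref{main condition general} supplied by Lemma~\ref{FirstTechLemma} and Lemma~\ref{questionable oneal lemma}. Your truncation $f_n=f\chi_{\{|f|\le n\}}$, combined with the $L_1$-Lipschitz continuity of $z\mapsto\int_0^t\mu(s,z)\,ds$, is a valid extra step that the paper omits; it is needed to pass from the lemma's hypothesis (one argument bounded) to arbitrary $f\in L_1(\mathbb{T}^d)$ and $x\in L_1(\mathbb{T}^d_\theta)$.
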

\begin{proof} Let $\mathcal{M}_{1}=L_{\infty}(\mathbb{T}^{d})$ and $\mathcal{M}_{2}=\mathcal{M}_{3}=L_{\infty}(\mathbb{T}^{d}_{\theta}).$ It follows from Lemma~\ref{FirstTechLemma} and Lemma~\ref{questionable oneal lemma} that the convolution operator $ (f,x) \mapsto \frac1{16}f\ast x : L_{1}(\mathbb{T}^{d}) \times L_{1}(\mathbb{T}^{d}_{\theta}) \to L_{1}(\mathbb{T}^{d}_{\theta})$ satisfies all the assumptions in Lemma \ref{final B-lemma}. Hence, the conclusion holds.
\end{proof}

The following theorem is the main technical result of this paper.
\begin{thm}\label{main-conv-thm} Let $d\geq2$ and let $T$ be the operator defined by \eqref{operatorT}. If $f\in L_{2,\infty}(\mathbb{T}^{d})$ (see \eqref{LorentzEqvNorm}), then for any $x\in L_{1}(\mathbb{T}^{d}_{\theta})$ we have the following inequality
\begin{equation}\label{DistrForConv}
    \mu(f\ast x)\prec\prec 32\|f\|_{L_{2,\infty}(\mathbb{T}^{d})}T\mu(x).
\end{equation}
    
\end{thm}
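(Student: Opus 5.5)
The plan is to apply Corollary \ref{convolution-cor} directly and then convert its right-hand side into a submajorization by $T\mu(x)$. Put $A:=\|f\|_{L_{2,\infty}(\mathbb{T}^{d})}$, so that $\mu(s,f)\le A s^{-1/2}$ for $s\in(0,1)$. This gives the two elementary bounds
\[
\int_0^t\mu(s,f)\,ds\le 2A t^{\frac12},\qquad \mu(s,f)\le A s^{-\frac12}.
\]
Substituting both into Corollary \ref{convolution-cor}, for every $t\in(0,1)$ we get
\[
\int_0^t\mu(s,f\ast x)\,ds\le 16A\Big(2t^{\frac12}\int_0^t\mu(s,x)\,ds+t\int_t^1\frac{\mu(s,x)}{s^{\frac12}}\,ds\Big).
\]
The goal is to bound the bracket by $2\int_0^t (T\mu(x))(u)\,du$, which yields the constant $32$.

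To do this, I will compute $\int_0^t (T g)(u)\,du$ for $g=\mu(x)$, which is decreasing and nonnegative, by Fubini. For the first term of \eqref{operatorT}:
\[
\int_0^t u^{-\frac12}\int_0^u g(s)\,ds\,du=\int_0^t g(s)\,2\big(t^{\frac12}-s^{\frac12}\big)\,ds .
\]
For the second term:
\[
\int_0^t\int_u^1 \frac{g(s)}{s^{\frac12}}\,ds\,du=\int_0^t g(s)\,s^{\frac12}\,ds+t\int_t^1\frac{g(s)}{s^{\frac12}}\,ds .
\]
Adding the two and using $2(t^{1/2}-s^{1/2})+s^{1/2}=2t^{1/2}-s^{1/2}\ge t^{1/2}$ for $s\le t$, we obtain
\[
\int_0^t (Tg)(u)\,du\ \ge\ t^{\frac12}\int_0^t g(s)\,ds+t\int_t^1\frac{g(s)}{s^{\frac12}}\,ds.
\]
The bracket above is at most twice this quantity. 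Hence
\[
\int_0^t\mu(s,f\ast x)\,ds\le 32A\int_0^t (T\mu(x))(u)\,du .
\]

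It remains to identify the right-hand side with $\int_0^t\mu(u,32A\,T\mu(x))\,du$. For this, note that $T\mu(x)$ is itself decreasing: its derivative is $-\tfrac12 u^{-3/2}\int_0^u g+u^{-1/2}g(u)-u^{-1/2}g(u)\le0$. It is also nonnegative, so $\mu(T\mu(x))=T\mu(x)$ and the integral inequality is exactly the submajorization \eqref{DistrForConv}. Also $\mu(f\ast x)$ vanishes beyond $1$ since $\tau_\theta$ is a state, so $t\ge1$ adds nothing.

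The main obstacle is making sure Corollary \ref{convolution-cor} really applies to $f\in L_{2,\infty}\subset L_1(\mathbb{T}^d)$ and general $x\in L_1(\mathbb{T}^d_\theta)$. Lemma \ref{final B-lemma} was stated with one argument bounded. If needed, I would truncate $x$ (or $f$) to bounded pieces, apply the inequality, and pass to the limit using $L_1$-continuity of convolution (Lemma \ref{FirstTechLemma}) together with lower semicontinuity of $\int_0^t\mu$ under $L_1$ convergence. The remaining estimates are routine.
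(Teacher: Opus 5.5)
Your proposal is correct and follows the paper's proof. The paper uses Corollary~\ref{convolution-cor}, the bound $\mu(s,f)\le\|f\|_{L_{2,\infty}(\mathbb{T}^{d})}s^{-1/2}$, and the comparison $t^{1/2}\int_0^t\mu(s,x)\,ds+t\int_t^1\mu(s,x)s^{-1/2}\,ds\le\int_0^t (T\mu(x))(u)\,du$; it gets this comparison by differentiating the left side and checking it is pointwise below $T\mu(x)$, whereas you compute the right side exactly by Fubini, which is the integrated form of the same comparison. Your extra checks are valid and fill in points the paper passes over silently: that $T\mu(x)$ is decreasing, so the integral inequality really is the submajorization, and that a truncation is needed because Lemma~\ref{final B-lemma} assumes one bounded argument while here both $f$ and $x$ may be unbounded.
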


\begin{proof} By Corollary \ref{convolution-cor} we have 
\begin{equation*}
    \int\limits_{0}^t\mu(s,f\ast x)ds\leq 16\Big( \int\limits_{0}^t\mu(s,f)ds\cdot\int\limits_{0}^t\mu(s,x)ds+t\int\limits_t^1\mu(s,f)\mu(s,x)ds\Big),\ \ t\in(0,1).
\end{equation*}
Let us estimate the terms on the right-hand side. For the first term we get
\begin{equation*}
\begin{split}
    \int\limits_{0}^{t}\mu(s,f)ds&=\int\limits_{0}^{t}s^{\frac{1}{2}}\mu(s,f)s^{-\frac{1}{2}}ds\leq \|f\|_{L_{2,\infty}(\mathbb{T}^{d})}\int\limits_{0}^{t}s^{-\frac{1}{2}}ds\\
    &=2\|f\|_{L_{2,\infty}(\mathbb{T}^{d})}\left(s^{-\frac{1}{2}+1}\Big|_{0}^{t}\right)=2\|f\|_{L_{2,\infty}(\mathbb{T}^{d})} t^{\frac{1}{2}},
    \end{split}
\end{equation*}
and for the second term we get
\begin{equation*}
\begin{split}
   t\int\limits_t^1\mu(s,f)\mu(s,x)ds&=t\int\limits_{t}^{1}s^{\frac{1}{2}}\mu(s,f)\mu(s,x)s^{-\frac{1}{2}}ds\\
   &\leq \|f\|_{L_{2,\infty}(\mathbb{T}^{d})}t\int\limits_{t}^{1}\mu(s,x)s^{-\frac{1}{2}}ds.
    \end{split}
\end{equation*}

Thus, we have 
\begin{equation}\label{SubMajor}
\begin{split}
   \int\limits_{0}^{1}\mu(s,x\ast f)ds&\leq 32\|f\|_{L_{2,\infty}(\mathbb{T}^{d})}\left(t^{\frac{1}{2}}\int\limits_{0}^{t}\mu(s,x)ds+t\int\limits_{t}^{1}\mu(s,x)s^{-\frac{1}{2}}ds\right)\\
   &=32\|f\|_{L_{2,\infty}(\mathbb{T}^{d})}F(t),
    \end{split}
\end{equation}
here $F(t)$ is defined by
\begin{equation*}
    F(t)=t^{\frac{1}{2}}\int\limits_{0}^{t}\mu(s,x)ds+t\int\limits_{t}^{1}\mu(s,x)s^{-\frac{1}{2}}ds.
\end{equation*}
Taking derivative with respect to $t>0,$ we have
\begin{equation*}
\begin{split}
    F'(t)&=\frac{1}{2}t^{-\frac{1}{2}}\int\limits_{0}^{t}\mu(s,x)ds+t^{\frac{1}{2}}\mu(t,x)+\int\limits_{t}^{1}\mu(s,x)s^{-\frac{1}{2}}ds-t^{\frac{1}{2}}\mu(t,x)\\
    &=\frac{1}{2}t^{-\frac{1}{2}}\int\limits_{0}^{t}\mu(s,x)ds+\int\limits_{t}^{1}\mu(s,x)s^{-\frac{1}{2}}ds\\
    &\leq t^{-\frac{1}{2}}\int\limits_{0}^{t}\mu(s,x)ds+\int\limits_{t}^{1}\mu(s,x)s^{-\frac{1}{2}}ds\stackrel{\eqref{operatorT}}{=} (T\mu(x))(t).
\end{split}
\end{equation*}
Hence, 
\begin{equation*}
    F'(t)\leq (T\mu(x))(t), \,\ x\in L_1(\mathbb{T}^{d}_{\theta}).
\end{equation*}
By taking the integral with respect to $t$ we get
\begin{equation*}
    F(t)\leq\int\limits_{0}^{t}(T\mu(x))(s)ds, \,\ x\in L_1(\mathbb{T}^{d}_{\theta}).
\end{equation*}
Finally, combining the last inequality and \eqref{SubMajor}, we obtain 
\begin{equation*}
    \int\limits_{0}^{t}\mu(s,x\ast f)ds\leq 32\|f\|_{L_{2,\infty}(\mathbb{T}^{d})}\int\limits_{0}^{t}(T\mu(x))(s)ds, \,\ x\in L_1(\mathbb{T}^{d}_{\theta}).
\end{equation*}
In other words, we have
\begin{equation}\label{SubMajorMain}
    \mu(f\ast x)\prec\prec 32\|f\|_{L_{2,\infty}(\mathbb{T}^{d})}T\mu(x), \ x\in L_{1}(\mathbb{T}^{d}_{\theta}).
\end{equation}
This completes the proof.
\end{proof}
\begin{lem}\label{weak_L2-function}The function $f,$ which is the inverse Fourier transform on $\mathbb{T}^{d}$ of the function
\begin{equation*}
n\mapsto (1+|n|^{2})^{-\frac{d}{4}},\ \ n\in\mathbb{Z}^{d},
\end{equation*}
belongs to $L_{2,\infty}(\mathbb{T}^{d}).$
\begin{proof} 

We identify the torus with the cube
\begin{equation*}
\mathbb{T}^{d} = \big[-\tfrac{1}{2},\tfrac{1}{2}\big]^{d}.
\end{equation*}
Since the cube $[-\tfrac12,\tfrac12]^d$ is a fundamental domain of the lattice
$\mathbb{Z}^d$ acting on $\mathbb{R}^d$, any function
$g \in L_p([-\tfrac12,\tfrac12]^d)$ admits a canonical $\mathbb{Z}^d$-periodic
extension to $\mathbb{R}^d$. We identify this extension with a function on the
torus $\mathbb{T}^d = \mathbb{R}^d / \mathbb{Z}^d$. This construction is
standard; see, for example, \cite[Chapter~3, pp. 162-163]{G2014}
or \cite[Section~2.3, pp. 153-154]{SS}.
So, the function $f$ on $\big[-\tfrac{1}{2},\tfrac{1}{2}\big]^{d}$ is the periodization of the inverse Fourier transform $\mathcal{F}^{-1}_{\mathbb{R}^{d}}$ of
$(1+|\xi|^{2})^{-\frac{d}{4}}$ on $\mathbb{R}^{d}$ (\cite[Chapter 3]{G2014}):
\begin{equation}\label{eq:periodization}
f(t)=\sum_{n\in\mathbb{Z}^{d}}
\mathcal{F}^{-1}_{\mathbb{R}^{d}}\!\left[(1+|\xi|^{2})^{-\frac{d}{4}}\right](t+n),
\quad t\in\big[-\tfrac{1}{2},\tfrac{1}{2}\big]^{d}.
\end{equation}
A standard formula (see \cite[Chapter~1, \S3]{AS}, \cite[Example~2.2.7, p.~134]{LMSZ}) yields
\begin{equation*}
\mathcal{F}^{-1}_{\mathbb{R}^{d}}\!\left[(1+|\xi|^{2})^{-\frac{d}{4}}\right](s)
= C_d\,|s|^{-\frac{d}{4}}K_{\frac{d}{4}}(|s|),\quad s\in\mathbb{R}^{d},
\end{equation*}
where $K_{\frac{d}{4}}$ is the Macdonald function. Hence,
\begin{equation}\label{eq:f-explicit-short}
\begin{split}
f(t)
&=\sum_{n\in\mathbb{Z}^{d}} C_d\,|t+n|^{-\frac{d}{4}}K_{\frac{d}{4}}(|t+n|) \\
&= C_{d}|t|^{-\frac{d}{4}}K_{\frac{d}{4}}(|t|)
   +\sum_{n\in\mathbb{Z}^{d}\setminus\{0\}} C_d\,|t+n|^{-\frac{d}{4}}K_{\frac{d}{4}}(|t+n|) \\
&=: g_{0}(t)+g_{1}(t), \ \ t\in\big[-\tfrac{1}{2},\tfrac{1}{2}\big]^{d}.
\end{split}
\end{equation}
We first show that $g_{1}$ is bounded. For $t\in[-\tfrac{1}{2},\tfrac{1}{2}]^{d}$ and $n\in\mathbb{Z}^{d}\setminus\{0\}$ there is $j$ with $|n_{j}|\ge1$ and we have
\begin{equation*}
|t+n|\geq|t_{j}+n_{j}|\geq 1-\frac{1}{2}=\frac{1}{2}.
\end{equation*}
Macdonald function satisfies (see \cite[Chapter~1, \S4]{AS}) the inequality
\begin{equation*}
|K_{\frac{d}{4}}(s)| \leq C_de^{-s},\quad s\geq\frac12.
\end{equation*}
Thus, for $t\in\big[-\tfrac{1}{2},\tfrac{1}{2}\big]^{d}$ and $n\in\mathbb{Z}^{d}\setminus\{0\}$, we have
\begin{equation*}
\big|C_d |t+n|^{-\frac{d}{4}}K_{\frac{d}{4}}(|t+n|)\big|
\leq 2^{\frac{d}{4}}C_de^{-|t+n|}\leq 2^{\frac{d}{4}}C_de^{d^{\frac12}-|n|},
\end{equation*}
and the series $\sum\limits_{n\in\mathbb{Z}^d}e^{-|n|}$ converges. Hence, $g_{1}$ is bounded on $\big[-\tfrac{1}{2},\tfrac{1}{2}\big]^{d}$.

Since adding a bounded function does not affect membership in $L_{2,\infty}\big(\mathbb{T}^d\big)$, the only possible singularity of $f$ comes from $g_{0}$. Therefore, to prove $f\in L_{2,\infty}\big(\mathbb{T}^d\big)$, it suffices to show that $g_{0}\in L_{2,\infty}\big(\big[-\tfrac{1}{2},\tfrac{1}{2}\big]^{d}\big)$. 

The Macdonald function satisfies the asymptotic expansion
(see \cite[Chapter 1, \S 4]{AS})
\begin{equation*}
K_{\nu}(r)\sim \frac12\Gamma(\nu)\Big(\frac{r}{2}\Big)^{-\nu},
\ \ r\to 0^{+},\ \nu>0.
\end{equation*}
With $\nu=\frac d4>0$, we obtain
\begin{equation*}
g_{0}(t)
\sim C_d\,|t|^{-\frac{d}{4}}\cdot
\frac{\Gamma(\frac d4)}{2^{1+\frac d4}}|t|^{-\frac{d}{4}}
= C_d'\,|t|^{-\frac{d}{2}},
\ \ |t|\to 0,
\end{equation*}
where $C_d'=C_d\cdot\frac{\Gamma(\frac d4)}{2^{1+\frac d4}}$.

Let $m$ be the Lebesgue measure on $\big[-\tfrac{1}{2},\tfrac{1}{2}\big]^{d}$. For any $\lambda>0$,
\begin{equation*}
\begin{split}
m\big(\{t\in\big[-\tfrac{1}{2},\tfrac{1}{2}\big]^{d}: |g_{0}(t)|>\lambda\}\big)
&= m\big(\{t\in\big[-\tfrac{1}{2},\tfrac{1}{2}\big]^{d}: C_d'|t|^{-d/2}>\lambda\}\big) \\
&= m\big(\{t\in\big[-\tfrac{1}{2},\tfrac{1}{2}\big]^{d}: |t|<(C_d')^{-1}\lambda^{-2/d}\}\big) \\
&=\frac{\pi^{d/2}}{\Gamma(\frac{d}{2}+1)}(C_d')^{-d}(\lambda^{-2/d})^{d}
= \widetilde C_d\,\lambda^{-2},
\end{split}
\end{equation*}
where
$ \widetilde C_d=
\frac{\pi^{d/2}}{\Gamma(\frac{d}{2}+1)}
\left(C_{d}\cdot\frac{\Gamma(\frac{d}{4})}{2^{1+\frac{d}{4}}}\right)^{-d}.$
Consequently, by \eqref{LorentzEqvNorm} we have
\begin{equation*}
\|g_{0}\|_{L_{2,\infty}\big(\big[-\tfrac{1}{2},\tfrac{1}{2}\big]^{d}\big)}
=\sup_{0<\lambda<1}\lambda\,m\big(\{|g_{0}|>\lambda\}\big)^{1/2}
=\sup_{0<\lambda<1}\lambda\,(\widetilde C_d\,\lambda^{-2})^{1/2}
=\widetilde C_d^{1/2}<\infty.
\end{equation*}
Thus $g_{0}\in L_{2,\infty}\big(\big[-\tfrac{1}{2},\tfrac{1}{2}\big]^{d}\big)$. This completes the proof.
\end{proof}
\end{lem}

Now, we can obtain our distributional Sobolev type inequality, which is the main result of this section.
\begin{thm} \label{TheoremMain}Let $d\geq2$ and let $T$ be the operator defined by \eqref{operatorT}. Let $f$ be the inverse Fourier transform on $\mathbb{T}^{d}$ of the function
\begin{equation*}
n\mapsto (1+|n|^{2})^{-\frac{d}{4}},\ \ n\in\mathbb{Z}^{d}.
\end{equation*}
Then for any $x\in L_{1}(\mathbb{T}^{d}_{\theta})$ we have
    \begin{equation}\label{d/4estimate}
    \mu((1-\Delta_{\theta})^{-\frac{d}{4}}x)\prec\prec 32\|f\|_{L_{2,\infty}(\mathbb{T}^{d})}T\mu(x).
 \end{equation}
\end{thm}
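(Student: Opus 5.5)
The plan is to reduce Theorem \ref{TheoremMain} to Theorem \ref{main-conv-thm}, by identifying $(1-\Delta_{\theta})^{-\frac{d}{4}}x$ with the convolution $f\ast x$ and invoking Lemma \ref{weak_L2-function} for the hypothesis $f\in L_{2,\infty}(\mathbb{T}^{d})$.

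\emph{Step 1: $f$ is admissible.} By Lemma \ref{weak_L2-function}, $f\in L_{2,\infty}(\mathbb{T}^{d})$. Since $\mathbb{T}^{d}$ has finite measure, $L_{2,\infty}(\mathbb{T}^{d})\subset L_{1}(\mathbb{T}^{d})$; indeed $\int_0^1 \mu(s,f)\,ds\leq 2\|f\|_{L_{2,\infty}}$, as in the proof of Theorem \ref{main-conv-thm}. Hence the Bochner integral $f\ast x$ in \eqref{Convolution} is defined for every $x\in L_{1}(\mathbb{T}^{d}_{\theta})$.

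\emph{Step 2: identify the Bessel potential with the convolution.} By \eqref{ConvMult}, $f\ast x=T_{\widehat f}(x)$, and by construction $\widehat f(n)=(1+|n|^{2})^{-\frac d4}$. Comparing with \eqref{J^s} for $s=-\frac d2$ gives $f\ast x=J^{-\frac d2}x=(1-\Delta_{\theta})^{-\frac d4}x$. To be careful about the domains, I will check this through Fourier coefficients:
\[
\widehat{f\ast x}(n)=\int_{\mathbb{T}^{d}} f(s)\,\tau_{\theta}\big(\alpha_s(x)(e^{\theta}_n)^{*}\big)\,ds=\widehat f(n)\,\widehat x(n).
\]
Here the Bochner integral commutes with the bounded functional $\tau_{\theta}(\,\cdot\,(e_n^\theta)^*)$ on $L_1(\mathbb{T}^{d}_{\theta})$, and $\tau_\theta(\alpha_s(x)(e_n^\theta)^*)=s^{n}\widehat x(n)$, where $s^{n}$ denotes the character $e^{2\pi i\langle n,s\rangle}$ with the normalisation used for $\widehat f$. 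Both $f\ast x$ and $J^{-\frac d2}x$ lie in $\mathcal{D}'(\mathbb{T}^{d}_{\theta})$ and have the same Fourier coefficients. Since distributions are determined by their Fourier coefficients (the same fact used in the proof that $H^s_{\mathcal E}$ is a Banach space), the two coincide.

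\emph{Step 3: conclude.} Apply Theorem \ref{main-conv-thm} to this $f$ and $x$. It gives $\mu((1-\Delta_\theta)^{-\frac d4}x)=\mu(f\ast x)\prec\prec 32\|f\|_{L_{2,\infty}(\mathbb{T}^d)}T\mu(x)$, which is \eqref{d/4estimate}.

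The main obstacle is Step 2. One must make sure the normalisations of $\alpha_s$, $s^n$ and the torus Fourier transform match, so that $\widehat f(n)$ is exactly the symbol $(1+|n|^2)^{-\frac d4}$ and not a rescaled version. If a rescaling of $n$ (for instance a factor $2\pi$) appeared, I would absorb it by a dilation in the definition of $f$. Such a dilation changes only the constant $\|f\|_{L_{2,\infty}}$, not the form of the estimate.
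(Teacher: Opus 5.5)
Your proposal is correct and is essentially the paper's proof. The paper likewise writes $(1-\Delta_{\theta})^{-\frac d4}x=f\ast x$, invokes Lemma~\ref{weak_L2-function} for $f\in L_{2,\infty}(\mathbb{T}^{d})$, and applies Theorem~\ref{main-conv-thm}; your Fourier-coefficient check in Step 2 just spells out the identification the paper takes for granted. (Under the usual convention, $\int_{\mathbb{T}^{d}} f(s)s^{n}\,ds$ is $\widehat f(-n)$ rather than $\widehat f(n)$, but this is harmless because the symbol $(1+|n|^{2})^{-\frac d4}$ is even, and no rescaling or dilation contingency is actually needed.)
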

\begin{proof} Let us represent $(1-\Delta_{\theta})^{-\frac{d}{4}}$ as a convolution operator:
\begin{equation}\label{eq:convolution}
(1-\Delta_{\theta})^{-\frac{d}{4}}x = f \ast x,\ \  x \in L_{1}(\mathbb{T}^{d}_{\theta}),
\end{equation}
where $f$ is the inverse Fourier transform on $\mathbb{T}^{d}$ of the function
\begin{equation*}
n\mapsto (1+|n|^{2})^{-\frac{d}{4}},\ \ n\in\mathbb{Z}^{d}.
\end{equation*}
By Lemma \ref{weak_L2-function}, we have that $f\in L_{2,\infty}(\mathbb T^d).$ Hence, by \eqref{eq:convolution} and Theorem \ref{main-conv-thm}, we obtain 
\begin{equation*}
        \mu((1-\Delta_{\theta})^{-\frac{d}{4}}x)=\mu(f\ast x)\prec\prec 32\|f\|_{L_{2,\infty}(\mathbb{T}^{d})}T\mu(x), \ x\in L_{1}(\mathbb{T}^{d}_{\theta}),
\end{equation*}
thereby we completing the proof.
\end{proof}

\subsection{Noncommutative fractional Sobolev embeddings}

The next theorem is the main ``ideological'' result of the current section.

\begin{thm}\label{SymSobolevTh}Let $T$ be the operator defined by \eqref{operatorT}. If $E(0,1)$ is a symmetric and $F(0,1)$ is a fully symmetric function spaces such that $T$ is bounded from $E(0,1)$ to $F(0,1),$ then we have the embedding
$$H^{\frac d2}_{\mathcal{E}}(\mathbb{T}^{d}_{\theta}) \hookrightarrow \mathcal{F}(\mathbb{T}^{d}_{\theta})$$
and for any $x\in \mathcal{E}(\mathbb{T}^{d}_{\theta})$ we have
    \begin{equation*}
        \|(1-\Delta_{\theta})^{-\frac{d}{4}}x\|_{\mathcal{F}(\mathbb{T}^{d}_{\theta})}\lesssim \|x\|_{\mathcal{E}(\mathbb{T}^{d}_{\theta})}, \,\ x\in \mathcal{E}(\mathbb{T}^{d}_{\theta}).
    \end{equation*}
\end{thm}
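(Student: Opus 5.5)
The plan is to combine the distributional Sobolev inequality of Theorem~\ref{TheoremMain} with the full symmetry of $F$ and the boundedness of $T\colon E(0,1)\to F(0,1)$. After that, Lemma~\ref{lem:J^s-iso} converts the resulting norm inequality into the embedding. Throughout, $c_d:=32\|f\|_{L_{2,\infty}(\mathbb{T}^d)}$, which is finite by Lemma~\ref{weak_L2-function}.

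\textbf{Step 1: the norm inequality.} Fix $x\in\mathcal{E}(\mathbb{T}^d_\theta)$. Since $\mathcal{E}(\mathbb{T}^d_\theta)\subseteq L_1(\mathbb{T}^d_\theta)$ (the inclusion recalled from \cite[Theorem~4.4.6]{DdPS}), Theorem~\ref{TheoremMain} applies and gives
\begin{equation*}
\mu\big((1-\Delta_\theta)^{-\frac d4}x\big)\prec\prec c_d\, T\mu(x).
\end{equation*}
By definition $\mu(x)\in E(0,1)$, so the hypothesis gives $T\mu(x)\in F(0,1)$ with $\|T\mu(x)\|_F\le \|T\|_{E\to F}\|\mu(x)\|_E=\|T\|_{E\to F}\|x\|_{\mathcal{E}}$.

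\textbf{Step 2: the main obstacle.} To invoke full symmetry of $F$ I need the majorant to be a decreasing function, or at least that $\mu(T\mu(x))\prec\prec T\mu(x)$. For a nonnegative decreasing $g$, the function $Tg$ is nonnegative. Moreover, $\mu(h)\prec\prec h$ holds for any nonnegative $h$, because $\int_0^t\mu(s,h)\,ds=\sup_{m(A)=t}\int_A h$. Hence
\begin{equation*}
\mu\big((1-\Delta_\theta)^{-\frac d4}x\big)\prec\prec c_d\,T\mu(x),
\end{equation*}
read as submajorization of the decreasing rearrangement of the right-hand side, is exactly the form required in the definition of full symmetry. One could also check directly that $T\mu(x)$ is decreasing. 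Indeed, its derivative is $-\tfrac12 t^{-3/2}\int_0^t\mu+t^{-1/2}\mu(t)-t^{-1/2}\mu(t)\le 0$, so in fact $T\mu(x)=\mu(T\mu(x))$.

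Full symmetry of $F$, transferred to $\mathcal{F}(\mathbb{T}^d_\theta)$ through $\|y\|_{\mathcal F}=\|\mu(y)\|_F$, then gives
\begin{equation*}
\|(1-\Delta_\theta)^{-\frac d4}x\|_{\mathcal{F}(\mathbb{T}^d_\theta)}\le c_d\|T\mu(x)\|_F\le c_d\|T\|_{E\to F}\|x\|_{\mathcal{E}(\mathbb{T}^d_\theta)}.
\end{equation*}

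\textbf{Step 3: the embedding.} Let $y\in H^{\frac d2}_{\mathcal E}(\mathbb{T}^d_\theta)$. By Lemma~\ref{lem:J^s-iso}, $x:=J^{\frac d2}y\in\mathcal{E}(\mathbb{T}^d_\theta)$, $\|x\|_{\mathcal E}=\|y\|_{H^{d/2}_{\mathcal E}}$, and $y=J^{-\frac d2}x=(1-\Delta_\theta)^{-\frac d4}x$ in $\mathcal{D}'(\mathbb{T}^d_\theta)$. For $x\in L_1$ this distribution coincides with $f\ast x$, by \eqref{eq:convolution} and \eqref{ConvMult}. Step 2 therefore gives $y\in\mathcal{F}(\mathbb{T}^d_\theta)$ with $\|y\|_{\mathcal F}\lesssim\|y\|_{H^{d/2}_{\mathcal E}}$, which is the continuous embedding. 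The only point needing care is this last identification: $J^{-d/2}$ acting on distributions must agree with the convolution representation on $L_1$. This holds because both are Fourier multipliers with the same symbol $(1+|n|^2)^{-d/4}$, and distributions are determined by their Fourier coefficients.
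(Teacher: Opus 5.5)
Your proof is correct and follows the same route as the paper: Theorem~\ref{TheoremMain}, together with full symmetry of $F$ and boundedness of $T\colon E(0,1)\to F(0,1)$, gives the norm estimate, and writing $y=(1-\Delta_\theta)^{-\frac d4}(1-\Delta_\theta)^{\frac d4}y$ via Lemma~\ref{lem:J^s-iso} gives the embedding. You also make explicit several points the paper leaves implicit: that $\mathcal{E}(\mathbb{T}^d_\theta)\subseteq L_1(\mathbb{T}^d_\theta)$, so Theorem~\ref{TheoremMain} applies; that $T\mu(x)$ is decreasing, so full symmetry of $F$ applies directly (this makes your remark ``$\mu(h)\prec\prec h$'', which is stated in the reverse of the direction you actually use, unnecessary); and that $J^{-d/2}$ agrees with convolution by $f$ on $L_1$.
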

\begin{proof} Let $F(0,1)$ be a fully symmetric space and $\mathcal{F}(\mathbb{T}^{d}_{\theta})$ be the corresponding noncommutative fully symmetric space. By Theorem \ref{TheoremMain} for any $x\in \mathcal{F}(\mathbb{T}^{d}_{\theta}),$ we have 
    \begin{equation*}
        \|(1-\Delta_{\theta})^{-\frac{d}{4}}x\|_{\mathcal{F}(\mathbb{T}^{d}_{\theta})}\stackrel{\eqref{SymNCspaceNorm}}{=}\|\mu((1-\Delta_{\theta})^{-\frac{d}{4}}x)\|_{F(0,1)}\lesssim \|T\mu(x)\|_{F(0,1)}.
    \end{equation*}
Since $T:E(0,1)\to F(0,1)$ is bounded by the assumption, it follows that
 \begin{equation}\label{xinTh4.10}
     \|(1-\Delta_{\theta})^{-\frac{d}{4}}x\|_{\mathcal{F}(\mathbb{T}^{d}_{\theta})}\lesssim\|T\mu(x)\|_{F(0,1)}\lesssim \|\mu(x)\|_{E(0,1)}\stackrel{\eqref{SymNCspaceNorm}}{=}\|x\|_{\mathcal{E}(\mathbb{T}^{d}_{\theta})}, \,\ x\in \mathcal{E}(\mathbb{T}^{d}_{\theta}).
 \end{equation}
Now, for any $y\in H^{\frac{d}{2}}_{\mathcal{E}}(\mathbb{T}^{d}_{\theta})$, we obtain
 \begin{equation*}
 \begin{split}
     \|y\|_{\mathcal{F}(\mathbb{T}^{d}_{\theta})}&=\|(1-\Delta_{\theta})^{-\frac{d}{4}}(1-\Delta_{\theta})^{\frac{d}{4}}y\|_{\mathcal{F}(\mathbb{T}^{d}_{\theta})}\\
     &\stackrel{\eqref{xinTh4.10}}{\lesssim}\|(1-\Delta_{\theta})^{\frac{d}{4}}y\|_{\mathcal{E}(\mathbb{T}^{d}_{\theta})}=\|y\|_{H_{\mathcal{E}}^{\frac{d}{2}}(\mathbb{T}^{d}_{\theta})}, \,\ y\in H^{\frac{d}{2}}_{\mathcal{E}}(\mathbb{T}^{d}_{\theta}).
 \end{split}
 \end{equation*}
 In other words, we have the following embedding
 \begin{equation*}
     H^{\frac d2}_{\mathcal{E}}(\mathbb{T}^{d}_{\theta}) \hookrightarrow \mathcal{F}(\mathbb{T}^{d}_{\theta}),
 \end{equation*}
thereby completing the proof.
\end{proof}

\begin{cor}\label{ThmSEmb} Let $d\geq2$ and $1\leq p\leq\infty$. Let $\phi$ be the function defined by \eqref{PhiPsiFunc}. Then we have the following noncommutative fractional Sobolev inequalities 
    \begin{itemize}
         \item [(i)] $\|(1-\Delta_{\theta})^{-\frac{d}{4}}x\|_{L_{p}(\mathbb{T}^{d}_{\theta})}\lesssim \|x\|_{L_{p}(\mathbb{T}^{d}_{\theta})}, \ x\in L_{p}(\mathbb{T}^{d}_{\theta})$;
        \item [(ii)] $\|(1-\Delta_{\theta})^{-\frac{d}{4}}x\|_{\Lambda^{(2)}_{\phi}(\mathbb{T}^{d}_{\theta})}\lesssim \|x\|_{L_{2}(\mathbb{T}^{d}_{\theta})}, \ x\in L_{2}(\mathbb{T}^{d}_{\theta}).$
           \end{itemize}
\end{cor}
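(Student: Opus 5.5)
Both parts are direct consequences of Theorem~\ref{SymSobolevTh}, which requires only a symmetric space $E(0,1)$, a fully symmetric space $F(0,1)$ and boundedness of $T\colon E(0,1)\to F(0,1)$. These boundedness statements are exactly the content of Proposition~\ref{ThmTBdd}. The plan is therefore to check, in each case, that the spaces have the required symmetry properties, then quote the proposition and the theorem.

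\emph{Part (i).} Take $E=F=L_p(0,1)$ with $1\leq p\leq\infty$. These are fully symmetric, as recalled in the preliminaries. By Proposition~\ref{ThmTBdd}(i), $T$ is bounded on $L_p(0,1)$. Theorem~\ref{SymSobolevTh} then gives
\[
\|(1-\Delta_{\theta})^{-\frac d4}x\|_{L_p(\mathbb{T}^d_\theta)}\lesssim\|x\|_{L_p(\mathbb{T}^d_\theta)}.
\]
To avoid any ambiguity about the domain of Theorem~\ref{TheoremMain}, one can argue directly. Since $L_p(\mathbb{T}^d_\theta)\subset L_1(\mathbb{T}^d_\theta)$ because $\tau_\theta$ is a state, Theorem~\ref{TheoremMain} applies to every such $x$ and gives
\[
\mu\big((1-\Delta_\theta)^{-\frac d4}x\big)\prec\prec c\,T\mu(x).
\]
Full symmetry of $L_p$ turns this into the norm estimate
\[
\big\|\mu\big((1-\Delta_\theta)^{-\frac d4}x\big)\big\|_{L_p}\leq c\,\|T\mu(x)\|_{L_p}\lesssim\|\mu(x)\|_{L_p},
\]
which is the claim.

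\emph{Part (ii).} Take $E=L_2(0,1)$ and $F=\Lambda_\phi^{(2)}(0,1)$. The Lorentz space $\Lambda_\phi$ is fully symmetric, since $\phi(t)=1/\log(e/t)$ is increasing and concave on $(0,1)$ with $\phi(0+)=0$. By \cite[Proposition 6.6.9]{DdPS}, its $2$-convexification $\Lambda^{(2)}_\phi$ is then fully symmetric as well. Proposition~\ref{ThmTBdd}(ii) gives boundedness of $T\colon L_2(0,1)\to\Lambda^{(2)}_\phi(0,1)$. As before, $x\in L_2(\mathbb{T}^d_\theta)\subset L_1(\mathbb{T}^d_\theta)$, so Theorem~\ref{TheoremMain} and full symmetry of the noncommutative space $\Lambda^{(2)}_\phi(\mathbb{T}^d_\theta)$ give
\[
\|(1-\Delta_\theta)^{-\frac d4}x\|_{\Lambda^{(2)}_\phi(\mathbb{T}^d_\theta)}\lesssim\|T\mu(x)\|_{\Lambda^{(2)}_\phi(0,1)}\lesssim\|x\|_{L_2(\mathbb{T}^d_\theta)}.
\]

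The only point needing care is that $\Lambda^{(2)}_\phi$ with the quasi-norm \eqref{2ConvNorm} is genuinely fully symmetric. Submajorization $\mu(y)\prec\prec\mu(z)$ does not in general imply $\mu(y)^2\prec\prec\mu(z)^2$. So one must rely on the cited convexification result, or on the standard fact that for a fully symmetric $E$, submajorization passes to $E^{(2)}$ with norm constant $1$. I would cite \cite[Proposition 6.6.9]{DdPS} directly. Beyond that, the proof is routine bookkeeping.
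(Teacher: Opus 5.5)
Your argument is the paper's proof. It applies Theorem~\ref{SymSobolevTh} with $E=F=L_p(0,1)$ and with $E=L_2(0,1)$, $F=\Lambda^{(2)}_\phi(0,1)$, and takes the boundedness of $T$ from Proposition~\ref{ThmTBdd}. Your direct route through Theorem~\ref{TheoremMain} plus full symmetry is exactly how the paper proves Theorem~\ref{SymSobolevTh}. Two of your side remarks are wrong, although neither affects the conclusion.

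\emph{Concavity of $\phi$.} The function $\phi$ is not concave on all of $(0,1)$: $\phi''(t)=t^{-2}(1-\log t)^{-3}(1+\log t)>0$ on $(e^{-1},1)$. As a result, the functional $h\mapsto\int_0^1\mu(s,h)\,d\phi(s)$ is not monotone under $\prec\prec$. For example, take $f=\chi_{(0,1/2)}$ and $g=\chi_{(0,e^{-1})}+\frac{1/2-e^{-1}}{1-e^{-1}}\chi_{(e^{-1},1)}$. Then $g\prec\prec f$, yet $\int_0^1 g\,d\phi\approx 0.605>0.591\approx\int_0^1 f\,d\phi$. The paper's description of $\phi$ as concave has the same defect. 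The repair costs only a constant:
\begin{itemize}
\item $\phi$ is increasing and $\phi(t)/t$ is decreasing, so its least concave majorant $\tilde\phi$ satisfies $\phi\le\tilde\phi\le 2\phi$;
\item the identity $\int_0^1\mu(s,h)\,d\phi(s)=\int_0^\infty\phi\bigl(m(\{|h|>\lambda\})\bigr)\,d\lambda$ then shows $\Lambda_\phi=\Lambda_{\tilde\phi}$ with equivalent quasi-norms;
\item so you may work in the genuinely fully symmetric $\Lambda^{(2)}_{\tilde\phi}$, which suffices for a $\lesssim$ statement.
\end{itemize}

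\emph{Squares and submajorization.} Your caution here is backwards. Suppose $\mu(y)\prec\prec\mu(z)$. Hardy's lemma with the decreasing weight $\mu(y)\chi_{(0,t)}$, followed by Cauchy--Schwarz, gives
$\int_0^t\mu(s,y)^2\,ds\le\int_0^t\mu(s,y)\mu(s,z)\,ds\le\bigl(\int_0^t\mu(s,y)^2\,ds\bigr)^{1/2}\bigl(\int_0^t\mu(s,z)^2\,ds\bigr)^{1/2}$.
Hence $\mu(y)^2\prec\prec\mu(z)^2$, after truncating $\mu(y)$ if the left side could be infinite. This is exactly the mechanism by which $E^{(2)}$ inherits full symmetry from $E$, so the point you flagged as needing care is not an obstacle.
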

\begin{proof}
(i) It follows from Proposition \ref{ThmTBdd} (i) that $T$ is bounded on $L_{p}(0,1), 1\leq p\leq\infty$, and it is known that the $L_{p}$-spaces ($1\leq p\leq\infty$) are fully symmetric (see, \cite[Chapter 6, Theorem 6.2.1, p. 399]{DdPS}). Then, by taking $F(0,1)=E(0,1)=L_{p}(0,1)$ in Theorem \ref{SymSobolevTh}, we obtain
\begin{equation*}
    \|(1-\Delta_{\theta})^{\frac{d}{4}}x\|_{L_{p}(\mathbb{T}^{d}_{\theta})}\lesssim\|x\|_{L_{p}(\mathbb{T}^{d}_{\theta})}, \ x\in L_{p}(\mathbb{T}^{d}_{\theta}).
\end{equation*}

(ii) Once again, Proposition \ref{ThmTBdd} (ii) yields that  $T$ is bounded from $L_{2}(0,1)$ to $\Lambda^{(2)}_{\phi}(0,1)$, and recall that, the space $\Lambda^{(2)}_{\phi}(0,1)$ is fully symmetric \cite[Chapter 6, Theorem 6.3.8, p. 405]{DdPS}. By taking $E(0,1)=L_{2}(0,1)$ and $F(0,1)=\Lambda^{(2)}_{\phi}(0,1)$ in Theorem \ref{SymSobolevTh}, we conclude
\begin{equation*}
   \|(1-\Delta_{\theta})^{-\frac{d}{4}}x\|_{\Lambda^{(2)}_{\phi}(\mathbb{T}^{d}_{\theta})}\lesssim \|x\|_{L_{2}(\mathbb{T}^{d}_{\theta})}, \ x\in L_{2}(\mathbb{T}^{d}_{\theta}).
   \end{equation*}
   This completes the proof.
\end{proof}
\begin{rem}Note that Theorem~\ref{SymSobolevTh}, together with Corollary~\ref{ThmSEmb} (i), yields the result of \cite[Theorem~6.6 (i)]{XXY}, while combined with Corollary~\ref{ThmSEmb} (ii) it gives a noncommutative version of the Hansson–Brezis–Wainger inequality proved in \cite[Proposition~7]{SZ}. In the special case when $\theta=0$, our results in Corollary \ref{ThmSEmb} recover numerous classical results on Sobolev embedding due to Hansson \cite{Hansson}, Brezis and Wainger \cite{BW}, and Cwikel and Pustylnik \cite{CP}.
\end{rem}

Now, let $x\in L_{\infty}(\mathbb{T}^{d}_{\theta})$, and $M_{x}:L_{2}(\mathbb{T}^{d}_{\theta})\to L_{2}(\mathbb{T}^{d}_{\theta})$ be a multiplication operator given by $M_{x}(y)=yx$. The next theorem is a noncommutative analogue of the Cwikel--Solomyak type estimate (see, \cite[Proposition 19]{SZ}).

\begin{thm}\label{C-SforM}Let $d\geq2$. For any $x\in \mathtt{M}_{\phi}(\mathbb{T}^{d}_{\theta})$ we have
    \begin{equation*}
        \|(1-\Delta_{\theta})^{-\frac{d}{4}}M_{x}(1-\Delta_{\theta})^{-\frac{d}{4}}\|_{L_{2}(\mathbb{T}^{d}_{\theta})\to L_{2}(\mathbb{T}^{d}_{\theta})}\lesssim\|x\|_{\mathtt{M}_{\phi}(\mathbb{T}^{d}_{\theta})}.
    \end{equation*}
\end{thm}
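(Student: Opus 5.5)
Write $A=(1-\Delta_{\theta})^{-\frac d4}$, which is bounded and self-adjoint on $L_2(\mathbb{T}^{d}_{\theta})$. The plan is a duality argument that reduces the operator norm to the Sobolev inequality of Corollary~\ref{ThmSEmb}(ii), followed by the Lorentz--Marcinkiewicz duality between $\Lambda_\phi$ and $\mathtt{M}_\phi$. By density of $\mathcal{P}_\theta$ in $L_2$, it suffices to bound $|\langle AM_xAy,z\rangle|$ for polynomials $y,z$ with $\|y\|_{L_2}=\|z\|_{L_2}=1$.

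First, since $A$ is self-adjoint on $L_2$,
\[
\langle AM_xAy,z\rangle=\tau_\theta\big((Ay)x(Az)^{*}\big).
\]
Set $u=Ay$ and $v=Az$. By Corollary~\ref{ThmSEmb}(ii), both $u$ and $v$ lie in $\Lambda^{(2)}_\phi(\mathbb{T}^{d}_{\theta})$ with norm $\lesssim 1$. Unwinding the 2-convexification norm \eqref{2ConvNorm}, this says $\||u|^2\|_{\Lambda_\phi}=\|\mu(u)^2\|_{\Lambda_\phi}\lesssim 1$, and the same holds for $v$.

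Second, estimate the trace by singular values. By traciality, $|\tau_\theta(uxv^{*})|=|\tau_\theta(xv^{*}u)|$. The noncommutative H\"older-type inequality for $\tau$-measurable operators (e.g.\ \cite[Chapter 3]{LSZ}, \cite{DdPS}) gives
\[
|\tau_\theta(xv^{*}u)|\le\int_0^1\mu(s,x)\,\mu(s,v^{*}u)\,ds,
\]
and the submajorization $\mu(v^*u)\prec\prec\mu(v)\mu(u)\le\frac12(\mu(u)^2+\mu(v)^2)$ holds (Fack--Kosaki). Write $g=\frac12(\mu(u)^2+\mu(v)^2)$; this is decreasing and satisfies $\|g\|_{\Lambda_\phi}\lesssim 1$. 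Since $\mu(x)$ is decreasing, the Hardy lemma turns the submajorization into
\[
|\tau_\theta(uxv^{*})|\le\int_0^1\mu(s,x)\,g(s)\,ds .
\]

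Third, apply Lorentz--Marcinkiewicz duality on $(0,1)$. Write $g(s)=\int_{(s,1]}d(-g)+g(1)$ and integrate by parts:
\[
\int_0^1\mu(s,x)\,g(s)\,ds=\int_0^1\Big(\int_0^t\mu(s,x)\,ds\Big)\,d(-g)(t)+g(1)\int_0^1\mu(s,x)\,ds .
\]
Bounding $\int_0^t\mu(s,x)\,ds\le\|x\|_{\mathtt{M}_\phi}\phi(t)$ and integrating back yields
\[
\int_0^1\mu(s,x)\,g(s)\,ds\le\|x\|_{\mathtt{M}_\phi}\int_0^1 g\,d\phi=\|x\|_{\mathtt{M}_\phi}\|g\|_{\Lambda_\phi}\lesssim\|x\|_{\mathtt{M}_\phi}.
\]
Here $\phi(0)=0$ and $\phi(1)=1$. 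Taking the supremum over $y$ and $z$ gives the claim. For $x$ that are not bounded, $M_x$ is first understood as this bounded form, defined on polynomials and then extended.

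The main obstacle is the second step. Fack--Kosaki gives $\mu(v^*u)\prec\prec\mu(u)\mu(v)$, but not a pointwise inequality, so passing from $\int\mu(x)\mu(v^*u)$ to $\int\mu(x)\mu(u)\mu(v)$ genuinely needs the decreasing-weight Hardy lemma; $\mu(x)$ being decreasing is exactly what makes this work. An alternative route avoids products altogether: polarize to the case $y=z$ and split $x$ into four positive parts, each of which has $\mathtt{M}_\phi$-norm $\lesssim\|x\|_{\mathtt{M}_\phi}$. For $x\ge0$ one has $\tau_\theta(uxu^*)=\tau_\theta(x^{1/2}u^*ux^{1/2})$, and then $\tau(ab)\le\int\mu(a)\mu(b)$ applies directly with $\mu(u^*u)=\mu(u)^2$.
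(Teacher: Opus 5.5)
Your argument is correct. It uses the same two ingredients as the paper, Corollary~\ref{ThmSEmb}(ii) and the $\Lambda_\phi$--$\mathtt{M}_\phi$ pairing, but gets there by a different path.

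\textbf{The paper's route.} It first reduces, without comment, to $x\ge 0$. Then $AM_xA$ is a positive operator, so its norm is $\sup_{\|y\|_2\le 1}\langle AM_xAy,y\rangle=\sup\tau_\theta(x\,|Ay|^2)$. The paper writes $|(Ay)^*|^2$ here, which has the same singular values, so the estimate is unaffected. It then applies the cited H\"older inequality $\|xw\|_{L_1}\le\|x\|_{\mathtt{M}_\phi}\|w\|_{\Lambda_\phi}$ with $w=|Ay|^2$, together with the identity $\||Ay|^2\|_{\Lambda_\phi}=\|Ay\|^2_{\Lambda^{(2)}_\phi}$.

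\textbf{Your route.} You keep the full sesquilinear form $\tau_\theta(uxv^*)$ with $u\neq v$ and arbitrary $x$. You therefore have to control $\mu(v^*u)$ rather than $\mu(u)^2$. That is why you need the Fack--Kosaki submajorization, the AM--GM symmetrization into $g$, and Hardy's lemma. You also prove the pairing inequality by Stieltjes integration by parts instead of citing it.

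\textbf{What each buys.} The paper's proof is shorter. Yours needs no positivity reduction. The paper's ``without loss of generality'' tacitly relies on two things: the four-part decomposition of $x$ with $\|x_l\|_{\mathtt{M}_\phi}\lesssim\|x\|_{\mathtt{M}_\phi}$, and the numerical-radius formula for positive operators. You also address explicitly that for $x\in\mathtt{M}_\phi(\mathbb{T}^d_\theta)\setminus L_\infty(\mathbb{T}^d_\theta)$ the operator $M_x$ only makes sense through the bounded form on polynomials. The paper passes over this, since it defines $M_x$ only for bounded $x$.

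The ``alternative route'' in your last paragraph is essentially the paper's proof.
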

\begin{proof}  Without loss of generality, we may assume that $x\in \mathtt{M}_{\phi}(\mathbb{T}^{d}_{\theta})$ is positive. Recall that, the K\"othe dual of $\Lambda_{\phi}$ is the Marcinkiewicz space $\mathtt{M}_{\phi}$ (see, \cite[Theorem 6.4.9, p. 412]{DdPS}), and for a positive operator $K:L_{2}(\mathbb{T}^{d}_{\theta})\to L_{2}(\mathbb{T}^{d}_{\theta})$, we have
\begin{equation*}
    \|K\|_{L_{2}(\mathbb{T}^{d}_{\theta})\to L_{2}(\mathbb{T}^{d}_{\theta})}=\sup\limits_{\|y\|_{L_{2}(\mathbb{T}^{d}_{\theta})}\leq1}|\langle Ky, y\rangle|=\sup\limits_{\|y\|_{L_{2}(\mathbb{T}^{d}_{\theta})}\leq1}|\tau_{\theta}( Ky\cdot y^{*})|, \ y\in L_{2}(\mathbb{T}^{d}_{\theta}).
\end{equation*}
On the other hand, for $x\in\mathtt{M}_{\phi}(\mathbb{T}^{d}_{\theta})$ and $ y\in L_{2}(\mathbb{T}^{d}_{\theta})$ we have
\begin{equation*}
    \left\langle(1-\Delta_{\theta})^{-\frac{d}{4}}M_{x}(1-\Delta_{\theta})^{-\frac{d}{4}}y,y\right\rangle=\tau_{\theta}\left(x\left|((1-\Delta_{\theta})^{-\frac{d}{4}}y)^{*}\right|^{2}\right).
\end{equation*}
Then, we obtain 
\begin{equation*}
\begin{split}
    \|(1-\Delta_{\theta})^{-\frac{d}{4}}M_{x}(1-\Delta_{\theta})^{-\frac{d}{4}}\|_{L_{2}(\mathbb{T}^{d}_{\theta})\to L_{2}(\mathbb{T}^{d}_{\theta})}&=\sup\limits_{\|y\|_{L_{2}(\mathbb{T}^{d}_{\theta})}\leq1}\left|\left\langle (1-\Delta_{\theta})^{-\frac{d}{4}}M_{x}(1-\Delta_{\theta})^{-\frac{d}{4}}y,y \right\rangle\right|\\
    &=\sup\limits_{\|y\|_{L_{2}(\mathbb{T}^{d}_{\theta})}\leq1}\left| \tau_{\theta}\left(x \cdot \left|\left((1-\Delta_{\theta})^{-\frac{d}{4}}y\right)^{*}\right|^{2}\right)\right|.
    \end{split}
    \end{equation*}
By \cite[Formula 4.12]{DdPS} we have
\begin{equation}
\|xy\|_{L_{1}(\mathbb{T}^{d}_{\theta})}\leq \|x\|_{\mathtt{M}_{\phi}(\mathbb{T}^{d}_{\theta})}\|y\|_{\Lambda_{\phi}(\mathbb{T}^{d}_{\theta})}, \ x\in \mathtt{M}_{\phi}(\mathbb{T}^{d}_{\theta}), \ y\in \Lambda_{\phi}(\mathbb{T}^{d}_{\theta}).
\end{equation}
Applying this formula and by definition of the 2-convexification \eqref{2ConvNorm}, we obtain
    \begin{equation*}
        \begin{split}
                \|(1-\Delta_{\theta})^{-\frac{d}{4}}M_{x}(1-\Delta_{\theta})^{-\frac{d}{4}}\|_{L_{2}(\mathbb{T}^{d}_{\theta})\to L_{2}(\mathbb{T}^{d}_{\theta})} 
                &\leq\sup\limits_{\|y\|_{L_{2}(\mathbb{T}^{d}_{\theta})}\leq1}\left\| x \cdot\left|(1-\Delta_{\theta})^{-\frac{d}{4}}y^{*}\right|^{2} \right\|_{L_{1}(\mathbb{T}^{d}_{\theta})}\\
                &\leq \sup\limits_{\|y\|_{L_{2}(\mathbb{T}^{d}_{\theta})}\leq1} \|x\|_{\mathtt{M}_{\phi}(\mathbb{T}^{d}_{\theta})}\left\|\left((1-\Delta_{\theta})^{-\frac{d}{4}}y^{*}\right)^{2} \right\|_{\Lambda_{\phi}(\mathbb{T}^{d}_{\theta})} \\
                &= \sup\limits_{\|y\|_{L_{2}(\mathbb{T}^{d}_{\theta})}\leq1} \|x\|_{\mathtt{M}_{\phi}(\mathbb{T}^{d}_{\theta})}\left\|(1-\Delta_{\theta})^{-\frac{d}{4}}y^{*} \right\|^{2}_{\Lambda_{\phi}^{(2)}(\mathbb{T}^{d}_{\theta})}.
\end{split}
\end{equation*}
Hence, for $x\in\mathtt{M}_{\phi}(\mathbb{T}^{d}_{\theta})$ and $y\in L_{2}(\mathbb{T}^{d}_{\theta}),$  by Corollary \ref{ThmSEmb} (ii) we have
\begin{equation*}
        \|(1-\Delta_{\theta})^{-\frac{d}{4}}M_{x}(1-\Delta_{\theta})^{-\frac{d}{4}}\|_{L_{2}(\mathbb{T}^{d}_{\theta})\to L_{2}(\mathbb{T}^{d}_{\theta})}\lesssim\sup\limits_{\|y\|_{L_{2}(\mathbb{T}^{d}_{\theta})}\leq1} \|x\|_{\mathtt{M}_{\phi}(\mathbb{T}^{d}_{\theta})} \|y^{*}\|^{2}_{L_{2}(\mathbb{T}^{d}_{\theta})}\lesssim \|x\|_{\mathtt{M}_{\phi}(\mathbb{T}^{d}_{\theta})},
\end{equation*}
thereby completing the proof.
\end{proof}

\subsection{$L_{2}$-time decay for the heat type equation} Let us consider the following heat-type equation for $s>0$
\begin{equation}\label{Main-equation-1}
\partial_{t}^{\alpha}(u(t)-u_{0}) + (1-\Delta_{\theta})^{\frac{s}{2}}u(t) = 0, \ \ t > 0, \ \ 0 < \alpha \leq 1, 
\end{equation}
with the initial data
\begin{equation}\label{Initial-date-1}
u(0) = u_{0}\in L_{2}(\mathbb{T}^{d}_{\theta}).   
\end{equation} 

\begin{definition} Let $s>0$ and $u_{0}\in L_{2} (\mathbb{T}^{d}_{\theta}).$ Then the operator $u$ defined by the formula
\begin{equation*}
    u(t)=E_{\alpha}(-t^{\alpha}(1-\Delta_{\theta})^{\frac{s}{2}}u_0):=\sum\limits_{n\in\mathbb{Z}^{d}}E_{\alpha}(-t^{\alpha}(1+|n|^{2})^{\frac{s}{2}})\widehat{u}_{0}(n)e^{\theta}_{n}, \ t>0,
\end{equation*} 
is called a mild solution to the problem \eqref{Main-equation-1}-\eqref{Initial-date-1}. When $\theta=0$, the same definition is used in \cite{VergaraPresentAuthors, VergaraZacher2010}.

\end{definition}

Similar questions were investigated in \cite{STT}, where the authors obtained $L_{p}$-$L_{q}$ time decay estimates for $1<p\leq 2\leq q<\infty$.  In the particular case $p=q=2$, however, their approach does not yield an $L_{2}$-decay estimate for $t>0$, since the method loses any $t$--dependence and provides only a constant bound instead of a genuine decay.
By using Theorem \ref{TheoremMain}, we obtain the following theorem, which complements results in \cite[Theorem 4.1.2, (i)-(ii)]{STT} for the case $p=q=2$.

\begin{thm}\label{L2timeest}Let $s=\frac{d}{2}, \ \alpha\in(0,1]$ and $0\neq u_{0} \in L_{2}(\mathbb{T}^{d}_{\theta}).$ Assume that $u$ is a mild solution to the Cauchy problem \eqref{Main-equation-1}-\eqref{Initial-date-1}. Then we have the following $L_{2}$-time decay for the mild solution
\begin{equation*}
    \|u(t)\|_{L_{2}(\mathbb{T}^{d}_{\theta})}\lesssim t^{-\alpha},\,\ t>0.
\end{equation*}
\end{thm}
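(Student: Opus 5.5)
The plan is to estimate $u(t)$ by splitting off a factor $t^{-\alpha}$ through a bounded Fourier multiplier. The remaining factor $(1-\Delta_{\theta})^{-\frac d2}=(1-\Delta_{\theta})^{-\frac d4}(1-\Delta_{\theta})^{-\frac d4}$ is then controlled by Corollary~\ref{ThmSEmb}~(i) with $p=2$, applied twice; this corollary is where Theorem~\ref{TheoremMain} enters.

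Write $\lambda_n=(1+|n|^2)^{\frac d4}$, so that $\lambda_n\geq 1$ and $s=\frac d2$. For $t>0$ introduce the Fourier multiplier $M_t$ with symbol
\begin{equation*}
m_t(n)=t^{\alpha}\lambda_n\,E_{\alpha}(-t^{\alpha}\lambda_n)\,\lambda_n, \qquad n\in\mathbb{Z}^d,
\end{equation*}
and observe the algebraic identity
\begin{equation*}
u(t)=t^{-\alpha}(1-\Delta_{\theta})^{-\frac d4}(1-\Delta_{\theta})^{-\frac d4}\,\widetilde M_t u_0,
\end{equation*}
where $\widetilde M_t$ is the multiplier with symbol $t^{\alpha}\lambda_n E_{\alpha}(-t^{\alpha}\lambda_n)$. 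This holds because $(1-\Delta_{\theta})^{-\frac d4}(1-\Delta_{\theta})^{-\frac d4}$ has symbol $\lambda_n^{-2}$. To be careful, the correct symbol for $\widetilde M_t$ is $t^{\alpha}\lambda_n^{2}E_{\alpha}(-t^\alpha\lambda_n^2)$, since $(1-\Delta_\theta)^{s/2}$ with $s=\frac d2$ has symbol $\lambda_n^2=(1+|n|^2)^{\frac d2}$ and not $\lambda_n$. I will therefore set $\Lambda_n=(1+|n|^2)^{\frac d2}$ throughout and use the symbol $t^\alpha\Lambda_n E_\alpha(-t^\alpha\Lambda_n)$.

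The key analytic step is to bound this symbol uniformly in $n$ and $t$. For $0<\alpha<1$ the argument $z=-t^{\alpha}\Lambda_n$ lies on the negative real axis, where $|\arg z|=\pi$. Hence the Mittag-Leffler estimate \eqref{ML-estimate} applies and gives
\begin{equation*}
|t^{\alpha}\Lambda_n E_{\alpha}(-t^{\alpha}\Lambda_n)|\leq \frac{C\,t^\alpha\Lambda_n}{1+t^\alpha\Lambda_n}\leq C.
\end{equation*}
For $\alpha=1$ we have $E_1(z)=e^{z}$, and the same bound holds because $re^{-r}\leq e^{-1}$ for $r>0$. By Plancherel \eqref{Parceveal}, $\widetilde M_t$ is therefore bounded on $L_2(\mathbb{T}^d_\theta)$ with norm at most $C$, independently of $t$.

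Finally, I apply Corollary~\ref{ThmSEmb}~(i) with $p=2$ twice, obtaining
\begin{equation*}
\|u(t)\|_{L_2(\mathbb{T}^d_\theta)}\leq t^{-\alpha}\big\|(1-\Delta_{\theta})^{-\frac d4}(1-\Delta_{\theta})^{-\frac d4}\widetilde M_tu_0\big\|_{L_2(\mathbb{T}^d_\theta)}\lesssim t^{-\alpha}\|\widetilde M_tu_0\|_{L_2(\mathbb{T}^d_\theta)}\lesssim t^{-\alpha}\|u_0\|_{L_2(\mathbb{T}^d_\theta)}.
\end{equation*}
This is the claim, with implicit constant proportional to $\|u_0\|_{L_2(\mathbb{T}^d_\theta)}$.

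The only delicate point is the uniform symbol bound, that is, checking that \eqref{ML-estimate} is legitimately applicable on the negative real axis for every $\alpha\in(0,1)$, together with the separate treatment of $\alpha=1$. Everything else is a direct application of Plancherel and the Sobolev inequality already established.
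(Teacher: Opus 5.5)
\textbf{The proof as written treats the wrong equation.} The error is in the step where you ``correct'' yourself. In the paper, $(1-\Delta_\theta)^{s/2}$ is the multiplier with symbol $(1+|n|^2)^{s/2}$. For $s=\frac d2$ this symbol is $(1+|n|^2)^{\frac d4}=\lambda_n$, not $\Lambda_n=(1+|n|^2)^{\frac d2}$. So the mild solution is $u(t)=\sum_n E_\alpha(-t^\alpha\lambda_n)\widehat{u}_0(n)e^\theta_n$. What you prove with $\Lambda_n$ is the decay estimate for the equation driven by $(1-\Delta_\theta)^{\frac d2}$, i.e.\ $s=d$, not the stated one.

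For the stated equation, your two-factor decomposition $u(t)=t^{-\alpha}(1-\Delta_\theta)^{-\frac d4}(1-\Delta_\theta)^{-\frac d4}\widetilde M_tu_0$ forces $\widetilde M_t$ to have symbol $t^\alpha\lambda_n^2E_\alpha(-t^\alpha\lambda_n)$. This symbol is unbounded. For $0<\alpha<1$ one has $E_\alpha(-x)\sim\frac{1}{\Gamma(1-\alpha)x}$ as $x\to\infty$, so the symbol behaves like $\lambda_n/\Gamma(1-\alpha)\to\infty$ as $|n|\to\infty$. Hence the uniform bound on $\widetilde M_t$, which is your key step, fails. Your first version (symbol $t^\alpha\lambda_nE_\alpha(-t^\alpha\lambda_n)$ with two factors) was not an identity either, since it produces $\lambda_n^{-1}E_\alpha(-t^\alpha\lambda_n)$.

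\textbf{The fix is to peel off only one factor:}
\[
u(t)=t^{-\alpha}(1-\Delta_\theta)^{-\frac d4}\widetilde M_tu_0,\qquad \widetilde M_t \text{ with symbol } t^\alpha\lambda_nE_\alpha(-t^\alpha\lambda_n).
\]
Your Mittag-Leffler bound, including the separate treatment of $\alpha=1$, then works verbatim and gives $\|\widetilde M_t\|_{L_2\to L_2}\le C$. A single application of Corollary~\ref{ThmSEmb}~(i) with $p=2$ finishes the proof.

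This corrected argument is essentially the paper's proof. There, Plancherel and \eqref{ML-estimate} give $\|u(t)\|_{L_2(\mathbb{T}^d_\theta)}\lesssim t^{-\alpha}\|(1-\Delta_\theta)^{-\frac d4}u_0\|_{L_2(\mathbb{T}^d_\theta)}$. Theorem~\ref{TheoremMain} together with Proposition~\ref{ThmTBdd}~(i) at $p=2$ then bounds the last norm by a multiple of $\|u_0\|_{L_2(\mathbb{T}^d_\theta)}$.

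This last step does not actually need the Sobolev inequality. Since $\lambda_n\ge 1$, Plancherel alone gives $\|(1-\Delta_\theta)^{-\frac d4}u_0\|_{L_2(\mathbb{T}^d_\theta)}\le\|u_0\|_{L_2(\mathbb{T}^d_\theta)}$.
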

\begin{proof} Let us take the $L_{2}$-norm from the mild solution $u$:
\begin{equation*}
\begin{split}
    \|u(t)\|^{2}_{L_{2}(\mathbb{T}^{d}_{\theta})}&=\|\widehat{u}(t)\|^{2}_{\ell_{2}(\mathbb{Z}^{d})}=\sum\limits_{n\in\mathbb{Z}^{d}}|E_{\alpha}(-t^{\alpha}(1+|n|^{2})^{\frac{d}{4}})\widehat{u}_{0}(n)|^{2}\\
    & \overset{\eqref{ML-estimate}}{\leq} \sum\limits_{n\in\mathbb{Z}^{d}}\frac{c_{1}}{(1+t^{\alpha}(1+|n|^{2})^{\frac{d}{4}})^{2}}|\widehat{u}_{0}(n)|^{2}\\
    &\lesssim t^{-2\alpha} \sum\limits_{n\in\mathbb{Z}^{d}}\frac{t^{2\alpha}(1+|n|^{2})^{\frac{d}{2}}}{(1+t^{\alpha}(1+|n|^{2})^{\frac{d}{4}})^{2}}(1+|n|^{2})^{-\frac{d}{2}}|\widehat{u}_{0}(n)|^{2}\\
    &\lesssim t^{-2\alpha} \sum\limits_{n\in\mathbb{Z}^{d}}|(1+|n|^{2})^{-\frac{d}{4}}\widehat{u}_{0}(n)|^{2}=t^{-2\alpha}\|(1+|\cdot|^{2})^{-\frac{d}{4}}\widehat{u}_{0}(\cdot)\|^{2}_{\ell_{2}(\mathbb{Z}^{d})}\\
    &\overset{\eqref{Parceveal}}{=}t^{-2\alpha}\|(1-\Delta_{\theta})^{-\frac{d}{4}}(u_{0})\|^{2}_{L_{2}(\mathbb{T}^{d}_{\theta})}. 
    \end{split}
\end{equation*}

Note that, $(1-\Delta_{\theta})^{-\frac{d}{4}}$ is the convolution operator such that $(1-\Delta_{\theta})^{-\frac{d}{4}}(u_0)=f*u_0$ (see formula \eqref{eq:convolution}), where $f$ is the inverse Fourier transform on $\mathbb{T}^{d}$ of the function
\begin{equation*}
n\mapsto (1+|n|^{2})^{-\frac{d}{4}},\ \ n\in\mathbb{Z}^{d}.
\end{equation*}
It was shown in Therorem \ref{TheoremMain} that $f\in L_{2,\infty}(\mathbb{T}^{d}).$
Hence, since $u_{0}\in L_{2}(\mathbb{T}^{d}_{\theta}),$ applying Theorem \ref{TheoremMain} and Proposition \ref{ThmTBdd} (i) (for $p=2$), we obtain 
\begin{equation*}
    \begin{split}
            \|u(t)\|_{L_{2}(\mathbb{T}^{d}_{\theta})}&\lesssim t^{-\alpha}\|\mu((1-\Delta_{\theta})^{-\frac{d}{4}}(u_{0}))\|_{L_{2}(0,1)} \overset{\eqref{d/4estimate}}{\leq} 32\,t^{-\alpha}\|f\|_{L_{2,\infty}(\mathbb{T}^{d})}\|T\mu(u_{0})\|_{L_{2}(0,1)}\\
    &\leq 32\,t^{-\alpha}\|f\|_{L_{2,\infty}(\mathbb{T}^{d})}\|\mu(u_{0})\|_{L_{2}(0,1)}=32\,t^{-\alpha}\|f\|_{L_{2,\infty}(\mathbb{T}^{d})}\|u_{0}\|_{L_{2}(\mathbb{T}^{d}_{\theta})}.
\end{split}
\end{equation*}
By assumption $\|u_{0}\|_{L_{2}(\mathbb{T}^{d}_{\theta})}$ is finite. Hence, we get
\begin{equation*}
    \|u(t)\|_{L_{2}(\mathbb{T}^{d}_{\theta})}\lesssim t^{-\alpha},\,\ t>0.
\end{equation*}
This completes the proof.
\end{proof}

\section{Acknowledgements}

The work was partially supported by the Australian Research Council. K.T. was partially supported by ARC grant FL17010005. F.S. was supported by ARC grant FL17010005 and ARC grant DP230100434. D.Z. was supported by ARC grant DP230100434. 


\begin{center}

\end{center}

\begin{thebibliography}{999} 

 
\bibitem{AS} N. Aronszajn, K. Smith, {\it Theory of Bessel potentials}. I. Ann. Inst. Fourier {\bf 11} (1961), 385--475.

\bibitem{BS}  C. Bennett, R. Sharpley, {\it Interpolation of operators, Pure and Applied Mathematics}, vol. 129, Academic Press, Inc., Boston, MA, 1988. MR928802

\bibitem{BW} H. Brezis, S. Wainger. {\it A note on limiting cases of Sobolev embeddings and convolution inequalities.} Commun. Partial Differ. Equ. 5(7), 773–789 (1980)

\bibitem{Chasseigne2006} E.~Chasseigne, M.~Chaves, J.D.~Rossi, {\it Asymptotic behavior for nonlocal diffusion equations,} J. Math. Pures Appl. 86 (3) (2006) 271--291.
 
\bibitem{CXY} Z. Chen, Q. Xu, Z. Yin. {\it Harmonic analysis on quantum tori}. Comm. Math. Phys. {\bf 322}:3 (2013), 755--805.

\bibitem{CPS} A. Cianchi, L. Pick, L. Slavíková. {\it  Sobolev embeddings in Orlicz and Lorentz spaces with measures.} J. Math. Anal. Appl. 485 (2020), no. 2, 123827, 31 pp.

\bibitem{Cianchi} A. Cianchi. {\it Symmetrization and second-order Sobolev inequalities}. Ann. Mat. Pura Appl. 183 (2004), 45-77.

\bibitem{Connes1980} A. Connes. {\it $C^*$-alg\`ebres et g\'eom\'etrie diff\'erentielle}.  C. R. Acad. Sc. Paris, s\'er. A, {\bf 290} (1980), 599--604.

\bibitem{CM2014} A. Connes, H. Moscovici, {\it Modular curvature for noncommutative two-tori}.   J. Amer. Math. Soc. {\bf 27} (2014), 639--684.

\bibitem{CP} M. Cwikel, E. Pustylnik. {\it Sobolev type embeddings in the limiting case.} J. Fourier Anal. Appl. 4(4–5), 433–446 (1998)

\bibitem{CzerKamin} M. M. Czerwińska, A. H. Kaminska, {\it Geometric properties of noncommutative symmetric spaces of measurable operators and unitary matrix ideals.} Commentationes Mathematicae. 57 (2017), no. 1, 45-122.

\bibitem{DdPS}  P.G. Dodds, B. de Pagter and F.A. Sukochev. \emph{Noncommutative Integration and Operator Theory}. Springer, Berlin, Heidelberg. 2023.

\bibitem{DT} T. K. Donaldson, N. S. Trudinger. \emph{Orlicz-Sobolev Spaces and Imbedding Theorems.} J. Funct. Anal., 1971.

\bibitem{EKP} D. E. Edmunds, R. Kerman and L. Pick. {\it Optimal Sobolev Imbeddings Involving Rearrangement-Invariant Quasinorms.} J. Funct. Anal. 170 (2000), no. 2, 307–355.

\bibitem{Evans} L. C. Evans. {\it Partial Differential Equations}, 2nd ed, American Math Society, 2010.


\bibitem{GO} A. Gogatishvili, V. I. Ovchinnikov. {\it Interpolation orbits and optimal Sobolev's embeddings},  J. Funct. Anal., 253(1) (2007) 1-17. 

\bibitem{G2014} L.~Grafakos. {\it Classical Fourier Analysis}. Third Edition. Springer-Verlag, New York (2014)

\bibitem{HaLeePonge} H. Ha, G. Lee, R. Ponge. {\it Pseudodifferential calculus on noncommutative tori, I. Oscillating integrals.} Internat. J. Math. {\bf 30} (2019), no. 8, 1950033, 74 pp.

\bibitem{Hansson} K. Hansson. {\it Imbedding theorems of Sobolev type in potential theory.} Math. Scand. 45(1), 77–102 (1979)

\bibitem{IgnatRossi2010} L.I.~Ignat, J.D.~Rossi. {\it Decay estimates for nonlocal problems via energy methods,} J. Math. Pures Appl. 92 (2) (2009) 163–187.


\bibitem{VergaraPresentAuthors} J.~Kemppainen, J.~Siljander, V.~Vergara, R.~Zacher. {\it Decay estimates for time-fractional and other non-local in time subdiffusion equations in $\mathbb{R}^d$}, Math. Ann. 366 (3) (2016) 941–979.

\bibitem{KSZ}  J.~Kemppainen, J.~Siljander and  R.~Zacher. {\it Representation of solutions and large-time behavior for fully nonlocal diffusion equations.} J. Differ. Equ. 263 (2017) 149-201.

\bibitem{KS} N. Kalton, F. Sukochev. {\it Symmetric norms and spaces of operators}, J. Reine Angew. Math. {\bf 621} (2008), 81--121.

\bibitem{Klimov1969} V. S. Klimov. {\it Imbedding theorems for symmetric spaces.} Math. USSR-Sb., (1969), Volume 8, Issue 2, Pages 161–168.

\bibitem{Klimov1970} V. S. Klimov. {\it On imbedding theorems for symmetric spaces.} Math. USSR-Sb., 11:3 (1970), 339–353.

\bibitem{Klimov1972} V. S. Klimov. {\it Imbedding theorems for Orlicz spaces and their application to boundary value problems}, Siberian Math. J., 13:2 (1972), 231–240.

\bibitem{KPS} S. G. Krein, Ju. I. Petunin, E. M. Semenov. {\it Interpolation of Linear Operators}. Translations of Mathematical
Monographs, vol. 54. American Mathematical Society, Providence (1982)

\bibitem{L} L. Lafleche. \emph{On Quantum Sobolev Inequalities.} J. Funct. Anal., 286(10):Paper No. 110400, 40, 2024

\bibitem{LT} J. Lindenstrauss, L. Tzafriri. Classical Banach spaces II, Springer-Verlag, Berlin, 1979.

\bibitem{LMSZ} S.~Lord, E.~McDonald, F.~Sukochev, D.~Zanin.  {\it Singular Traces: Volume 2 Trace Formulas}, Berlin, Boston: De Gruyter, (2023) 

\bibitem{LSZ} S.~Lord, F.~Sukochev, D.~Zanin.  {\it Singular Traces: Volume 1 Theory}, Berlin, Boston: De Gruyter, (2021) 

\bibitem{McDSX} E.~McDonald, F.~Sukochev, X.~Xiong. {\it Quantum differentiability on quantum tori}.  Commun. Math. Phys. 371,  1231-1260 (2019). 

\bibitem{ONeil} R. O’Neil. {\it Convolution operators and $L(p,q)$ spaces.} Duke Math. J. 30, 129–142 (1963)

\bibitem{PXu} G.~Pisier, Q.~Xu. {\it  Noncommutative $L_{p}$-spaces. Handbook of the geometry of Banach spaces}. Vol. 2, pages 1459–1517. North-Holland, Amsterdam, 2003.

\bibitem{Podlubny} I. Podlubny. \emph{Fractional Differential Equations}. Academic Press, New York, 1999.

\bibitem{Reference38} J. D. Rossi. {\it Asymptotics for evolution problems with nonlocal diffusion,} manuscript available at http://mate.dm.uba.ar/~jrossi/CURSO(Marra)25-3-08.pdf, 2009.

\bibitem{RST} M. Ruzhansky, S. Shaimardan, K. Tulenov. \emph{H\"ormander type Fourier multiplier theorem and Nikolskii inequality on quantum tori, and applications}. J. Fourier Anal. Appl. {\bf 32}, 7 (2026). https://doi.org/10.1007/s00041-025-10217-z

\bibitem{SchmeisserTriebel} H.J. Schmeisser, H. Triebel, {\it Topics in Fourier Analysis and Function Spaces,} Wiley-Interscience, Chichester, 1987.

\bibitem{STT} S.~Shaimardan, R.A.~Tastankul, and K.S.~Tulenov. \emph{Fourier multiplier on noncommutative torus and its applications to nonlinear equations}, Math. Z. 312, 16 (2026). https://doi.org/10.1007/s00209-025-03899-0

\bibitem{STo} A. G. Smadiyeva, B. T. Torebek, Decay estimates for the time-fractional evolution equations with time-dependent coefficients, \emph{Proceedings of the Royal Society A: Mathematical, Physical and Engineering Sciences},  479:2276 (2023), 20230103

\bibitem{Sobolev} S.L. Sobolev.{\it On a theorem of functional analysis.} Mat. Sb. 4 (46) 1938, 39-68.

\bibitem{Spera} M. Spera.  {\it Sobolev theory for noncommutative tori.} Rend. Sem. Mat. Univ. Padova {\bf 86} (1992), 143--156.

\bibitem{SS} E. M. Stein, R, Shakarchi. {\it Fourier Analysis: An Introduction, Princeton Lectures in Analysis}, Vol. 1, Princeton University Press, 2003.


\bibitem{F} F. Sukochev,. {\it Completeness of quasi-normed symmetric operator spaces}, Indag. Math. (N.S.) {\bf 25}:2 (2014), 376--388.

\bibitem{SZ} F. Sukochev, D. Zanin. {\it Optimal Cwikel–Solomyak Estimates}. J. Fourier Anal. Appl. 29, 21 (2023)

\bibitem{STZ}  F. Sukochev, K. Tulenov, D. Zanin. {\it Sobolev projection on quantum torus, its complete boundedness and applications}, J. Math. Anal. Appl. 543 (2025), no. 2, Paper No. 128906, 21.

\bibitem{T1} N.S. Trudinger. \emph{On Imbeddings into Orlicz Spaces and Some Applications.} J. Math. Mech., 1967.

\bibitem{Vazquez2007} J.~L.~Vázquez. {\it Barenblatt solutions and asymptotic behaviour for a nonlinear fractional heat equation of porous medium type}, J. Eur. Math. Soc. (JEMS) 16 (4) (2014) 769–803.
   
\bibitem{VergaraZacher2010} V.~Vergara, R.~Zacher, {\it Optimal decay estimates for time-fractional and other nonlocal sub diffusion equations via energy methods}, SIAM J. Math. Anal. 47 (1) (2015) 210–239.
 

\bibitem{XXY} X. Xiong, Q. Xu,  Z. Yin.    {\it Sobolev, Besov and Triebel-Lizorkin spaces on quantum tori}. Mem. Amer. Math. Soc. {\bf 252}:1203 (2018), 86 pages.

\end{thebibliography}
\end{document}